\newtheorem{defi}{Definition}
\newtheorem{fact}{Fact}
\newtheorem{thm}{Theorem}[section]
\newtheorem{prop}{Proposition}[section]
\newtheorem{lemm}{Lemma}[section]
\newtheorem{example}{Example}
\numberwithin{equation}{section}
\theoremstyle{remark}
\newtheorem{remark}{Remark}[section]
\renewcommand{\tablename}{Table}
\renewcommand{\thethm}{\Alph{thm}}
\def\hsymb#1{\mbox{\strut\rlap{\smash{\Huge$#1$}}\quad}} 
\newcommand{\BigFig}[1]{\parbox{12pt}{\Huge #1}}
\newcommand{\BigZero}{\BigFig{0}} 
\def\rnum#1{\expandafter{\romannumeral #1}} 
\def\Rnum#1{\uppercase\expandafter{\romannumeral #1}} 
\newcounter{mynum}
\renewcommand{\themynum}{\arabic{mynum}.}
\newcommand{\myitem}{\refstepcounter{mynum}\mbox{\themynum}} 
\begin{document}
\title{\textbf{Pseudo-hyperbolic Gauss maps\\
of Lorentzian surfaces in anti-de Sitter space
}}
\author{Honoka Kobayashi and Naoyuki Koike}
\date{\today}
\maketitle

\begin{abstract}
In this paper, we determine the type numbers of the pseudo-hyperbolic Gauss maps of all oriented Lorentzian surfaces 
of constant mean and Gaussian curvatures and non-diagonalizable shape operator in the $3$-dimensional anti-de Sitter 
space.  Also, we investigate the behavior of type numbers of the pseudo-hyperbolic Gauss map along the parallel 
family of such oriented Lorentzian surfaces in the $3$-dimensional anti-de Sitter space. 
Furthermore, we investigate the type number of the pseudo-hyperbolic Gauss map of one of Lorentzian hypersurfaces 
of B-scroll type in a general dimensional anti-de Sitter space.  
\end{abstract}

\section{Introduction}

\hspace{11pt} The notion of finite typeness of isometric immersions into a Euclidean space was introduced by B.Y. Chen in the late 
1970's (see \cite{C1},\,\cite{C2},\,\cite{C3},\,\cite{CV} etc.).  
Later, the finite typeness of isometric immersions into a Euclidean space or, in more general, a pseudo-Euclidean 
space have been studied by many geometers (see \cite{C1},\,\cite{C2}, \cite{C3} etc.).  
B.Y. Chen and P. Piccinni (\cite{CP}) extended the notion of the finite typeness to $C^{\infty}$-maps and 
studied the finite typeness of the Gauss map (which is not an immersion) of isometric immersions.  
Recently B. Bekta\c{s}, E.\"O. Canfes, U. Dursun and R. Ye\v{g}in has been studied the finite typeness of 
the pseudo-spherical (resp. the pseudo-hyperbolic) Gauss maps of isometric immersions into the pseudo-sphere 
(resp. the pseudo-hyperbolic space) (see \cite{BD},\,\cite{BCD1},\,\cite{BCD2},\,\cite{YD}).  

Let $M$ be an $n$-dimensional pseudo-Riemannian manifold of index $t$ and $\mathbb{E}^m_s$ be an $m$-dimensional pseudo-Euclidean space of index $s$. 
The smooth map $\phi:M \rightarrow \mathbb{E}^m_s$ is said to be {\it of finite type} if $\phi$ has the spectral decomposition: 
$\phi=\phi_1+\cdots +\phi_k$ with $\phi_i:M\rightarrow \mathbb{E}^m_s$'s are non-constant map such that $\Delta \phi_i=\lambda_i\phi_i$, 
where $\Delta$ is the Laplacian operator of $M$ and $\lambda_i$ are constants, 
and furthermore, if $\lambda_i$'s are mutually distinct, then $\phi$ is said to be {\it of $k$-type}. 
Denote by $\mathbb{S}^{m-1}_{s}$ the $(m-1)$-dimensional pseudo-sphere of constant curvature $1$ and index $s$ and 
$\mathbb{H}^{m-1}_s$ the $(m-1)$-dimensional pseudo-hyperbolic space of constant curvature $-1$ and index $s$.  
Let $M$ be an $n$-dimensional oriented pseudo-Riemannian manifold of index $t$ and 
${\bf x}:M\hookrightarrow \mathbb{S}^{m-1}_{s}\subset \mathbb{E}^m_s$ be an isometric immersion.  
Let $G(n+1,m)_{t}$ be the Grassmannian manifold consisting of $(n+1)$-dimensional oriented non-degenerate subspaces 
of index $t$ of $\mathbb{E}^m_s$. 
Define a map $\tilde{\nu}:M\rightarrow G(n+1,m)_{t}$ by 
$\tilde{\nu}(p)={\bf x}(p)\wedge {\bf x}_*(e_1^p)\wedge {\bf x}_*(e_2^p)\wedge \cdots \wedge {\bf x}_*(e_n^p)$ for 
$p\in M$, 
where $(e_1^p, \cdots ,e_n^p)$ is an orthonormal frame of $T_pM$ compatible with the orientation of $M$. 
This map $\tilde{\nu}$ is called {\it the pseudo-spherical Gauss map of ${\bf x}$}.  
Similarly, for an isometric immersion ${\bf x}:M\hookrightarrow \mathbb{H}^{m-1}_s \subset \mathbb{E}^m_{s+1}$, 
{\it the pseudo-hyperbolic Gauss map}
$\tilde{\nu}:M\rightarrow G(n+1,m)_{t+1}$ is defined. 

D.S. Kim and Y.H. Kim(\cite{KimKim}) classified the Lorentzian surfaces of 
constant mean and Gaussian curvatures and non-diagonalizable shape operator in the $3$-dimensional 
de Sitter $\mathbb{S}^3_1$ and anti-de Sitter $\mathbb{H}^3_1$ space as follows.  
\begin{fact}
{\sl Let $M^2_1$ be a Lorentzian surface in $\mathbb{S}^3_1$ or $\mathbb{H}^3_1$. 
If the mean and Gaussian curvatures are constant and the shape operator is not diagonalizable at a point, 
then $M^2_1$ is an open part of a complex circle or a B-scroll.}
\label{KimKim}
\end{fact}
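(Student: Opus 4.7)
The plan is to classify the shape operator $A$ pointwise by the normal form of self-adjoint endomorphisms of a Lorentzian $2$-plane, and then to integrate the resulting structure equations. At a point where $A$ is non-diagonalizable, the Jordan theory on a Lorentzian $2$-space forces one of two normal forms: either (i) $A$ has a pair of complex-conjugate non-real eigenvalues $\alpha\pm i\beta$ with $\beta\neq 0$, or (ii) $A$ has a single real eigenvalue $\lambda$ of algebraic multiplicity $2$ and geometric multiplicity $1$ whose eigenline is null. Since $H=\frac{1}{2}\,\mathrm{tr}\,A$ and (by the Gauss equation) $\det A=\pm 1-K$ are constant, the characteristic polynomial of $A$ is constant on $M^2_1$, and an elementary openness argument shows that the same case (i) or (ii) persists on a dense open subset of $M^2_1$. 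The problem therefore splits into two cases to be handled separately, each yielding one of the two conclusions.

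In Case (i), on a neighborhood of any point I would pick an orthonormal frame $(e_1,e_2)$ with $e_2$ timelike, in which $A$ acts by $Ae_1=\alpha e_1+\beta e_2$, $Ae_2=-\beta e_1+\alpha e_2$. This endows $M^2_1$ with a natural almost complex structure commuting with $A$, and writing out the Codazzi equation in this frame reduces to a pair of first-order linear PDEs for the connection form. These can be explicitly integrated, and by comparing the resulting second fundamental form with the standard model one identifies the immersion as an open part of a complex circle.

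In Case (ii), I would choose a null frame $(\ell,n)$ on $M^2_1$ with $\langle\ell,\ell\rangle=\langle n,n\rangle=0$, $\langle\ell,n\rangle=-1$, $A\ell=\lambda\ell$ and $An=\mu\ell+\lambda n$ with $\mu\neq 0$; the normalization $\ell\mapsto c\ell,\,n\mapsto c^{-1}n$ lets me rescale $\mu$. The constancy of $H$ forces $\lambda$ to be constant, and the Gauss equation forces $\lambda^2$ to agree with $\pm 1-K$. Unpacking the Codazzi equation in this null frame, together with the Gauss formula in the ambient space, yields the structure equations of a Cartan null frame along the integral curve $\gamma$ of $\ell$; in particular $\gamma$ is a null geodesic of $\mathbb{S}^3_1$ or $\mathbb{H}^3_1$ and the rulings are generated by a Cartan frame along $\gamma$. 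Translating this back into ambient coordinates and integrating the resulting ODE for $\gamma$ produces the B-scroll parametrization.

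The hard part will be Case (ii): one must carefully extract the Cartan null Frenet data of the base null curve from the intrinsic Codazzi and Gauss equations, and then verify that the ruled immersion so obtained coincides with the standard B-scroll model in $\mathbb{S}^3_1$ or $\mathbb{H}^3_1$, rather than just with some abstract ruled Lorentzian surface. Case (i), by contrast, reduces essentially to a linear ODE once the induced almost complex structure is exploited, and should be considerably more routine.
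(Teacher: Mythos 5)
The first thing to note is that the paper contains no proof of this statement: it is quoted as Fact~\ref{KimKim}, imported verbatim from the cited reference [KK] (D.S.~Kim and Y.H.~Kim), so there is no in-paper argument to compare yours against. Judged on its own terms, your outline does follow the standard route for this classification (and, as far as one can tell, the route of the cited source): split according to the two normal forms of a non-diagonalizable self-adjoint operator on a Lorentzian plane, use the constancy of $\mathrm{tr}\,A$ and $\det A=K\mp 1$ to fix the characteristic polynomial, and then integrate the Gauss--Codazzi equations in each case to recover the complex circle (complex eigenvalues) or the B-scroll (null Jordan block). That case division and the identification of which case produces which model are correct.

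There are, however, two concrete problems. First, in Case (ii) you have the geometry of the B-scroll backwards: the null eigendirection $\ell$ of the shape operator is the ruling direction $B$, so the integral curves of $\ell$ are the null \emph{geodesic} rulings $t\mapsto\gamma(s)+tB(s)$, whereas the base curve $\gamma$ carrying the Cartan frame is a \emph{transversal} null curve whose velocity $A=\dot\gamma$ pairs with $B$ via $\langle A,B\rangle=-1$; $\gamma$ is generically not a geodesic (its first curvature $k_1$ is precisely the off-diagonal Jordan entry $\mu$, which is nonzero by hypothesis). As written, ``the structure equations of a Cartan null frame along the integral curve $\gamma$ of $\ell$'' would produce a frame along the wrong curve and the reconstruction would not close up. Second, the ``dense open subset'' step is not automatic when the discriminant $(\mathrm{tr}\,A)^2-4\det A$ vanishes: at such points $A$ may equal $\lambda\,\mathrm{id}$ (umbilic) rather than a Jordan block, so non-diagonalizability only propagates to the open set where $A\neq\lambda\,\mathrm{id}$, and one must either argue that this set is all of $M^2_1$ or settle for a local conclusion. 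Beyond these, the two integration steps --- showing Codazzi forces the connection form to vanish in Case (i) and then matching with $S^1_{\mathbb C}(\kappa)$ up to isometry, and extracting the full Cartan data with constant $k_2$ in Case (ii) --- are the substance of the theorem and are only announced, not carried out.
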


B. Bekta\c{s}, E.\"O. Canfes and U. Dursun (\cite{BCD1}) determined the type number of the pseudo-spherical Gauss 
map of an oriented Lorentzian surface in $\mathbb{S}^3_1$ of non-zero constant mean curvature and non-diagonalizable 
shape operator at a point.  

\begin{fact}
{\sl An oriented Lorentzian surface in $\mathbb{S}^3_1$ of constant mean curvature and non-diagonalizable shape 
operator is of null 2-type pseudo-spherical Gauss map if and only if it is an open part of a non-flat B-scroll over 
a null curve.}
\label{BCD}
\end{fact}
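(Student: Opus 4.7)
The plan is to exploit Fact 1 as a dichotomy: a Lorentzian surface in $\mathbb{S}^3_1$ satisfying the stated hypotheses is an open part of either a complex circle or a B-scroll, so the strategy reduces to computing the pseudo-spherical Gauss map $\tilde{\nu}$ on each model and determining when its spectral decomposition has the null 2-type pattern $\tilde{\nu}=\tilde{\nu}_0+\tilde{\nu}_\lambda$ with $\Delta\tilde{\nu}_0=0$, $\Delta\tilde{\nu}_\lambda=\lambda\tilde{\nu}_\lambda$, and $\lambda\neq 0$. Equivalently, one must verify the operator identity $\Delta^2\tilde{\nu}=\lambda\,\Delta\tilde{\nu}$ together with $\Delta\tilde{\nu}\not\equiv 0$. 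After the Plücker identification, $\tilde{\nu}$ may be represented by the unit spacelike normal $\mathbf{N}$ of the surface in $\mathbb{E}^4_1$, which converts the spectral condition into concrete Laplacian computations on $\mathbf{N}$.

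For the ``if'' direction I would parametrize a non-flat B-scroll as $\mathbf{x}(s,t)=\gamma(s)+t\,\mathbf{B}(s)$, where $\gamma$ is a null curve in $\mathbb{S}^3_1$ equipped with a Cartan null frame $\{\mathbf{A},\mathbf{B},\mathbf{C}\}$ satisfying $\langle\mathbf{A},\mathbf{B}\rangle=-1$, $\langle\mathbf{C},\mathbf{C}\rangle=1$, $\mathbf{A}$ and $\mathbf{B}$ null, with structure equations governed by a Cartan curvature function $k(s)$. From these I would extract the induced Lorentzian metric, confirm the shape operator is non-diagonalizable, verify that the mean curvature is constant, and write $\mathbf{N}$ explicitly in terms of the frame. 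Computing $\Delta\mathbf{N}$ and iterating to $\Delta^2\mathbf{N}$ should produce the identity $\Delta^2\mathbf{N}=\lambda\,\Delta\mathbf{N}$, with the constant $\lambda$ determined by the intrinsic data of the scroll; non-flatness is exactly what makes $\Delta\mathbf{N}\not\equiv 0$ (equivalently $\lambda\neq 0$), yielding the null 2-type condition.

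For the converse, Fact 1 leaves two possibilities. A complex circle admits a parametrization in which $\mathbf{N}$ differs from $\mathbf{x}$ only by a constant vector, so that $\tilde{\nu}$ is of 1-type rather than null 2-type and this case is excluded. For B-scrolls, the computation of the forward direction shows that $\lambda=0$ precisely when the scroll is flat, so the null 2-type hypothesis forces non-flatness, giving the stated characterization.

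The principal technical obstacle will be the explicit computation of $\Delta^2\mathbf{N}$ on the B-scroll: one coordinate direction is null, and the Cartan frame carries non-trivial connection coefficients that do not simplify under the usual orthonormal reductions available in the Riemannian setting. Carefully tracking the derivatives of $\mathbf{A},\mathbf{B},\mathbf{C}$, together with the vanishing of $\nabla H$ and $\nabla K$ from the constancy assumptions, should collapse most of the resulting expression and make the null-plus-eigenvalue structure manifest; everything else in the argument is the invocation of Fact 1 and a short check on complex circles.
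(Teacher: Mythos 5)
Your overall architecture --- reduce via the Kim--Kim classification to the two model surfaces, then compute $\Delta\tilde{\nu}$ and $\Delta^{2}\tilde{\nu}$ on each --- is indeed the one the paper uses; note, though, that the paper does not prove Fact~\ref{BCD} itself (it is quoted from \cite{BCD1}), and the in-house argument to compare against is the proof of (\rnum{3})--(\rnum{4}) of Theorem~\ref{thmA} together with the converse in Section~5, the exact $\mathbb{H}^3_1$ analogue. Against that template your proposal has two genuine gaps in the converse. First, you invoke Fact~\ref{KimKim} directly, but Fact~\ref{KimKim} requires \emph{both} the mean and the Gaussian curvature to be constant, whereas Fact~\ref{BCD} assumes only constant mean curvature. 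You must first extract constancy of the Gaussian curvature from the null $2$-type hypothesis; the paper does this by writing out $\Delta^{2}\tilde{\nu}$ explicitly (formula (\ref{eq6})) and observing that $\Delta^{2}\tilde{\nu}=\lambda_{2}\Delta\tilde{\nu}$ forces $e_{j}(\Vert\hat{h}\Vert^{2})=0$, whence $S$ is constant by (\ref{eq5}). Without this step the reduction to the two model surfaces is unjustified. Second, your exclusion of the complex circle is wrong: for ${\bf x}(z)=\sqrt{\kappa}(\cos z,\sin z)$ the unit normal is $N=(d+\sqrt{-1}c)(\cos z,\sin z)$, i.e.\ $N$ is the image of ${\bf x}$ under a fixed \emph{linear} map (multiplication by the complex constant $(d+\sqrt{-1}c)/(c+\sqrt{-1}d)$), not ``${\bf x}$ plus a constant vector'', and the pseudo-spherical (or -hyperbolic) Gauss map of a complex circle of non-real radius is \emph{not} of $1$-type --- in the $\mathbb{H}^3_1$ case the paper shows it is of infinite type. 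The correct exclusion is that $\tilde{\nu}$ satisfies $\Delta^{2}\tilde{\nu}-b\,\Delta\tilde{\nu}+c\,\tilde{\nu}=0$ with constant term $c\neq 0$, so $0$ cannot occur as an eigenvalue of any finite-type decomposition and the map can never be of \emph{null} $k$-type.

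The forward direction is also incomplete as written. The identity $\Delta^{2}\tilde{\nu}=\lambda\Delta\tilde{\nu}$ with $\lambda\neq 0$ does hand you the candidate decomposition $\tilde{\nu}=\bigl(\tilde{\nu}-\lambda^{-1}\Delta\tilde{\nu}\bigr)+\lambda^{-1}\Delta\tilde{\nu}$ into a harmonic part and a $\lambda$-eigenpart, but to conclude null $2$-type rather than $1$-type you must additionally verify that the harmonic part is non-constant; the paper does this by computing $e_{1}(\tilde{\nu}_{1})\neq 0$, which uses $k_{1}\neq 0$. (Recall also Lemma~\ref{lemm2}: annihilation by a polynomial in $\Delta$ by itself only yields ``at most $2$-type \emph{or infinite type}'', so exhibiting the eigen-decomposition is not optional.) Finally, the claim that non-flatness is ``exactly what makes $\Delta N\not\equiv 0$'' conflates two different things: in the flat case of the $\mathbb{H}^3_1$ analogue one still has $\Delta\tilde{\nu}=2\tilde{\nu}-2k_{2}e_{3}\wedge e_{1}\wedge e_{2}\neq 0$; what non-flatness controls is the eigenvalue $\lambda$, and the flat scrolls are excluded because there $\Delta^{2}\tilde{\nu}=0$ while $\Delta\tilde{\nu}\neq 0$, which by Lemma~\ref{lemm1} forces infinite type rather than any finite type.
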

In this paper, we determined the type numbers of the pseudo-hyperbolic Gauss maps of such Lorentzian surfaces in 
$\mathbb{H}^3_1$. 
\begin{thm}
{\sl Let $M$ be an oriented Lorentzian surface in $\mathbb{H}^3_1$ of %
constant mean and Gaussian curvatures and non-diagonalizable shape operator.  
The following facts hold. \\
\begin{tabular}{cl}
(\rnum{1}) &If $M$ is the complex circle of radius $-1$, then the pseudo-hyperbolic Gauss map is of $1$-type.\\
(\rnum{2}) &If $M$ is the complex circle of radius $\kappa$ ($\mathrm{Re}(\kappa)=-1$, $\kappa \neq -1$), 
then the pseudo-hyperbolic\\
 & Gauss map is of infinite type. \\
(\rnum{3}) &If $M$ is a non-flat B-scroll, then the pseudo-hyperbolic Gauss is of null $2$-type. \\
(\rnum{4}) &If $M$ is a flat B-scroll, then the pseudo-hyperbolic Gauss map is of infinite type. 
\end{tabular}
}\label{thmA}
\end{thm}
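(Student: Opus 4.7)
The plan is to use Fact~\ref{KimKim} to reduce Theorem~\ref{thmA} to four explicit parametric families (two complex circles and two B-scrolls) and then, in each family, to compute the pseudo-hyperbolic Gauss map $\tilde{\nu}$ and its iterated Laplacians directly inside the pseudo-Euclidean space $\wedge^3\mathbb{E}^4_2$. In each case I would first choose an explicit isometric embedding $\mathbf{x}:M\hookrightarrow \mathbb{H}^3_1\subset\mathbb{E}^4_2$, together with an orthonormal frame $(e_1,e_2)$ adapted to the Jordan type of the shape operator, and then expand
\[
\tilde{\nu}=\mathbf{x}\wedge\mathbf{x}_*e_1\wedge\mathbf{x}_*e_2
\]
componentwise in a standard basis of $\wedge^3\mathbb{E}^4_2$. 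The standard identity expressing $\Delta\tilde{\nu}$ in terms of $\mathbf{x}$, the mean curvature, the squared norm of the second fundamental form, and its covariant derivative will be the main computational tool.

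For the complex-circle cases (\rnum{1}) and (\rnum{2}), I would use the complex-analytic parametrization in which the surface is given by complex trigonometric/hyperbolic functions of a single complex argument $\kappa z$. When $\kappa=-1$, the parametrization degenerates in such a way that $\Delta\tilde{\nu}=\lambda\tilde{\nu}$ for a real constant $\lambda$, giving $1$-type. When $\mathrm{Re}(\kappa)=-1$ but $\kappa\neq-1$, the nonzero imaginary part of $\kappa$ introduces a second frequency incommensurable with the real part, and the components of $\tilde{\nu}$ become products of real exponentials with trigonometric factors whose Laplacian eigenvalues are forced to be all distinct; this is what will rule out a finite spectral decomposition.

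For the B-scroll cases (\rnum{3}) and (\rnum{4}), I would work with a Cartan-type null frame $(A,B,C)$ along the null base curve $\gamma$ (with pairings $\langle A,B\rangle=-1$, $\langle C,C\rangle=1$) and the standard parametrization $\mathbf{x}(s,u)=\gamma(s)+uB(s)$, so that the shape operator has a $2\times 2$ Jordan block with eigenvalue equal to the mean curvature $H$. Following the pattern used in Fact~\ref{BCD} for $\mathbb{S}^3_1$, the non-flat case $H\neq 0$ should produce an identity of the form $\Delta\tilde{\nu}=a\tilde{\nu}+b\,\Xi_0$ with $\Xi_0$ a nonzero null constant vector in $\wedge^3\mathbb{E}^4_2$, giving null $2$-type. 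In the flat case, the analogous identities degenerate, and the components of $\tilde{\nu}$ become polynomial in the null coordinate $u$, which rules out finite type.

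The principal obstacle will be the rigorous proof of infinite type in cases (\rnum{2}) and (\rnum{4}), because the absence of \emph{any} finite spectral decomposition must be established, not merely the failure of a particular candidate. I plan to address this by computing the sequence $\{\Delta^k\tilde{\nu}\}_{k\geq 0}$ iteratively and exhibiting infinitely many linearly independent components, using the elementary fact that a function of the form $u^n f(s)$ (in case (\rnum{4})) or of the form $P(z,\bar z)\,e^{\mathrm{Im}(\kappa)\,\mathrm{Re}(z)\cdots}$ with unbounded degree $n$ or $\deg P$ (in case (\rnum{2})) cannot belong to any finite-dimensional sum of eigenspaces of $\Delta$.
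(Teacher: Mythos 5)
Your overall computational strategy (explicit parametrization of each family, an adapted orthonormal frame, and the standard formula for $\Delta\tilde{\nu}$) is the same as the paper's, and case (\rnum{1}) would go through. But the mechanisms you propose for the remaining conclusions do not match what the computation actually yields, and two of them would fail. In case (\rnum{2}) the computation shows that $\Delta$ preserves the two-dimensional span of $\tilde{\nu}$ and $N\wedge e_1\wedge e_2$ with \emph{constant} coefficients, so that $\Delta^2\tilde{\nu}-2(\alpha^2-\beta^2-1)\Delta\tilde{\nu}+4\beta^2\tilde{\nu}=0$. Consequently every iterate $\Delta^k\tilde{\nu}$ lies in that fixed plane, the components of $\tilde{\nu}$ have bounded ``degree,'' and your plan of exhibiting infinitely many linearly independent iterates (or components of unbounded degree) cannot succeed: the linear-independence criterion is simply inconclusive here. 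The true obstruction is that this real quadratic has \emph{negative} discriminant, so it has no real root; a finite-type decomposition with real eigenvalues $\hat\lambda_i$ would force each $\hat\lambda_i$ to be a real root of the quadratic, which is impossible. ``Eigenvalues forced to be all distinct'' does not rule out finite type --- non-reality of the eigenvalues does, and that idea is missing from your proposal.

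In case (\rnum{3}) your proposed identity $\Delta\tilde{\nu}=a\tilde{\nu}+b\,\Xi_0$ with $\Xi_0$ a \emph{constant} null vector would make $\tilde{\nu}+\tfrac{b}{a}\Xi_0$ an eigenvector of $\Delta$, i.e.\ $\tilde{\nu}$ would be of $1$-type up to an additive constant, not of null $2$-type. What actually happens is $\Delta\tilde{\nu}=2k_2^2\tilde{\nu}-2k_2\,e_3\wedge e_1\wedge e_2$ with $e_3\wedge e_1\wedge e_2$ non-constant; the null $2$-type decomposition is $\tilde{\nu}=\tilde{\nu}_1+\tilde{\nu}_2$ with $\tilde{\nu}_1=\frac{1}{k_2^2-1}(-\tilde{\nu}+k_2e_3\wedge e_1\wedge e_2)$ harmonic and $\tilde{\nu}_2$ an eigenvector with eigenvalue $2(k_2^2-1)$, and one must still verify that $\tilde{\nu}_1$ is non-constant (the paper does this by computing $e_1(\tilde{\nu}_1)\neq 0$). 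Note also that in $\mathbb{H}^3_1$ the flat/non-flat dichotomy for a B-scroll is $k_2^2=1$ versus $k_2^2\neq 1$ (the Gauss equation gives $S=2(k_2^2-1)$), not $H=0$ versus $H\neq 0$: a flat B-scroll has mean curvature $\mp k_2=\mp 1\neq 0$, so your case split is drawn along the wrong line. Your treatment of case (\rnum{4}) is essentially sound, but the polynomial-growth analysis is unnecessary: there one finds $\Delta^2\tilde{\nu}=0$ while $\Delta\tilde{\nu}\neq 0$, which rules out finite type immediately by Lemma~\ref{lemm1}.
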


\begin{remark} The shapes of the complex circles of radius $-1$ and $\kappa$ 
($\mathrm{Re}(\kappa)=-1$, $\kappa \neq -1$) are as in Figure 1.  
\end{remark}

\begin{thm}
{\sl 
Let $M$ be a complex circle in $\mathbb{H}^3_1$ and $u$ be any real number. 
The parallel surface $M^u$ of $M$ at distance $u$ is a complex circle and the radius $\kappa^u$ of 
the complex circle $M^u$ moves over the whole of $\{ z\in \mathbb{C} \,|\, \mathrm{Re}(z)=-1 \}$
when $u$ moves over $\mathbb{R}$. 
Hence the only parallel surface of $M$ has the pseudo-hyperbolic Gauss map of $1$-type and other parallel surfaces 
of $M$ have the pseudo-hyperbolic Gauss map of infinite type.
}
\label{thmD}
\end{thm}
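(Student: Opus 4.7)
The plan is to reduce the statement to an explicit dictionary between the radius $\kappa$ of a complex circle and the geometric data that is modified by taking a parallel surface in $\mathbb{H}^3_1$, and then to invoke Theorem~\ref{thmA}.

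\textbf{Step 1 (Parallel surface in $\mathbb{H}^3_1$).} Start from the explicit parametrization of a complex circle $M$ of radius $\kappa$ in $\mathbb{H}^3_1\subset \mathbb{E}^4_2$ (used in the proof of Theorem~\ref{thmA}). Because $M$ is a Lorentzian 2-surface in the Lorentzian 3-manifold $\mathbb{H}^3_1$, the normal line bundle of $M$ in $\mathbb{H}^3_1$ is spacelike, so a global unit normal $N$ exists. The normal exponential map in $\mathbb{H}^3_1$ along such a geodesic is given by
\[
\mathbf{x}^u(p)=\cosh(u)\,\mathbf{x}(p)+\sinh(u)\,N(p),
\]
and I would first verify directly that $\langle \mathbf{x}^u,\mathbf{x}^u\rangle_{\mathbb{E}^4_2}=-1$, so that $M^u\subset \mathbb{H}^3_1$.

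\textbf{Step 2 (Classifying $M^u$).} Next, I would show that $M^u$ again has constant mean and Gaussian curvatures and non-diagonalizable shape operator. This is routine: the shape operator $A^u$ of $M^u$ is obtained from the shape operator $A$ of $M$ by the standard parallel-surface formula, which in the constant-curvature ambient space $\mathbb{H}^3_1$ becomes a Möbius-type transformation of $A$. Since $A$ is non-diagonalizable (of Jordan type), so is $A^u$, and the constancy of the principal invariants is preserved. By Fact~\ref{KimKim}, $M^u$ must therefore be either a complex circle or a B-scroll; since $M^0=M$ is a complex circle and the family varies continuously in $u$, $M^u$ is a complex circle for every $u$.

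\textbf{Step 3 (The radius $\kappa^u$).} The core computation is to express the radius $\kappa^u$ of $M^u$ as a function of $u$ and the original $\kappa$. Using the explicit parametrization from Step~1, I would identify $\kappa^u$ with the invariant of $A^u$ that plays the role of $\kappa$ for $A$. Because the real part of $\kappa$ is pinned to $-1$ for every Lorentzian complex circle in $\mathbb{H}^3_1$ with non-diagonalizable shape operator (this is exactly the content of Theorem~\ref{thmA}(\rnum{1})--(\rnum{2}), so the line $\mathrm{Re}(z)=-1$ is the full moduli space), one necessarily has $\mathrm{Re}(\kappa^u)=-1$ for all $u\in\mathbb{R}$. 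The computation should then yield an explicit formula of the form $\mathrm{Im}(\kappa^u)=\mathrm{Im}(\kappa)+f(u)$ (or some simple monotone variant), where $f:\mathbb{R}\to\mathbb{R}$ is a bijection. Hence the map $u\mapsto\kappa^u$ is a bijection from $\mathbb{R}$ onto $\{z\in\mathbb{C}\mid \mathrm{Re}(z)=-1\}$.

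\textbf{Step 4 (Conclusion).} There is a unique $u_0\in\mathbb{R}$ with $\kappa^{u_0}=-1$; by Theorem~\ref{thmA}(\rnum{1}) the pseudo-hyperbolic Gauss map of $M^{u_0}$ is of $1$-type, while for $u\neq u_0$ we have $\mathrm{Im}(\kappa^u)\neq 0$, and Theorem~\ref{thmA}(\rnum{2}) says the pseudo-hyperbolic Gauss map of $M^u$ is of infinite type. The main obstacle is Step~3: one needs the precise dependence of $\kappa^u$ on the normal displacement $u$, which requires reading off $\kappa^u$ from the parallel-surface parametrization, and then checking rigorously that $\mathrm{Re}(\kappa^u)\equiv -1$ and that the induced map $u\mapsto \mathrm{Im}(\kappa^u)$ is a bijection onto $\mathbb{R}$. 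Once this dictionary is in hand, the rest of the proof is a direct application of Theorem~\ref{thmA}.
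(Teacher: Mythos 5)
Your outline follows the same overall route as the paper (parallel-surface formula in $\mathbb{H}^3_1$, identification of $M^u$ as a complex circle, computation of $\kappa^u$, then Theorem~\ref{thmA}), but the step you yourself flag as "the main obstacle" -- Step 3 -- is precisely the content of the proof, and it is not carried out. Moreover the form you guess for it, $\mathrm{Im}(\kappa^u)=\mathrm{Im}(\kappa)+f(u)$ with $f$ independent of $\kappa$, is not what the computation gives. Writing $\sqrt{\kappa}=c+\sqrt{-1}d$ with $c^2-d^2=-1$, the unit normal of ${\bf x}(z)=\sqrt{\kappa}(\cos z,\sin z)$ is $N(z)=(d+\sqrt{-1}c)(\cos z,\sin z)$, so
\[
{\bf x}^u(z)=\bigl[(c\cosh u+d\sinh u)+\sqrt{-1}(d\cosh u+c\sinh u)\bigr](\cos z,\sin z),
\]
i.e. $\sqrt{\kappa^u}=(c\cosh u+d\sinh u)+\sqrt{-1}(d\cosh u+c\sinh u)$. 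Squaring gives $\mathrm{Re}(\kappa^u)=(c^2-d^2)(\cosh^2u-\sinh^2u)=-1$ and $\mathrm{Im}(\kappa^u)=2cd\cosh 2u+(c^2+d^2)\sinh 2u$, which is a multiplicative (hyperbolic-rotation) mixing of the data, not an additive shift. Surjectivity onto the line $\mathrm{Re}(z)=-1$ then needs the inequality $c^2+d^2>|2cd|$, which holds because $(c^2+d^2)^2-(2cd)^2=(c^2-d^2)^2=1$; this makes $u\mapsto\mathrm{Im}(\kappa^u)$ strictly increasing and unbounded in both directions. Without this explicit formula and the inequality, the bijectivity claim in Step 3 is unsupported.

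A secondary point: your Step 2 detour through Fact~\ref{KimKim} plus a continuity argument is both unnecessary and shaky as stated -- you would need to justify why a continuously varying family cannot pass from complex circles to B-scrolls, which is not immediate. The paper avoids this entirely: the displayed formula for ${\bf x}^u$ is \emph{already} the standard parametrization $\sqrt{\kappa^u}(\cos z,\sin z)$ of a complex circle, so $M^u$ is identified as a complex circle by inspection, with no appeal to the classification. Also, the reason $\mathrm{Re}(\kappa^u)=-1$ is simply that $M^u\subset\mathbb{H}^3_1$ forces $\langle{\bf x}^u,{\bf x}^u\rangle=-1=\mathrm{Re}(\kappa^u)$; invoking Theorem~\ref{thmA}(\rnum{1})--(\rnum{2}) as the justification for this, as you do, conflates the moduli statement with the elementary normalization.
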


\begin{thm}
{\sl
Let $M$ be a B-scroll in $\mathbb{H}^3_1$ and $u\in \mathbb{R}$ sufficiently close to $0$. 
If $M$ is flat (resp. non-flat), then the parallel surface $M_u$ also is flat (resp. non-flat) B-scroll. 
Hence the type numbers of the pseudo-hyperbolic Gauss maps of the parallel surfaces of a B-scroll are equal to 
that of the original B-scroll. 
} 
\label{thmB}
\end{thm}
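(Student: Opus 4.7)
The plan is to parametrize $M$ using a null curve and its Cartan frame in $\mathbb{H}^3_1\subset\mathbb{E}^4_2$, compute the parallel surface $M_u$ explicitly, exhibit a null base curve for it so that it qualifies as a B-scroll, and track the shape operator to see that flatness is preserved; the type number statement is then immediate from Theorem \ref{thmA}\,(iii)--(iv).

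I begin by writing $M$ as $X(s,t)=\gamma(s)+tB(s)$, where $\gamma$ is a null curve in $\mathbb{H}^3_1$ and $\{A=\gamma',B,C\}$ is a null Cartan frame along $\gamma$ with $\langle A,B\rangle=-1$, $\langle C,C\rangle=1$, and all remaining pairings zero. In a suitable gauge the structure equations take the form $A'=kC$, $B'=-\gamma+fC$, $C'=fA+kB$ for scalar functions $k,f$ of $s$. A direct computation yields the unit spacelike normal $N(s,t)=tf(s)B(s)+C(s)$, so the parallel surface is
\[
Y(s,t)=\cosh u\,\gamma(s)+\sinh u\,C(s)+t\bigl(\cosh u+f(s)\sinh u\bigr)B(s).
\]
For each fixed $s$, the $t$-curve of $Y$ is an affine line in $\mathbb{E}^4_2$ along the null vector $B(s)$ through a point of $\mathbb{H}^3_1$ to which it is orthogonal, hence a null geodesic of $\mathbb{H}^3_1$. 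To realize $M_u$ in the B-scroll form I seek a null base curve $\hat\gamma(s)=\cosh u\,\gamma(s)+\sinh u\,C(s)+h(s)B(s)$; using the frame equations, $\langle\hat\gamma',\hat\gamma'\rangle=0$ becomes a Riccati-type first-order ODE for $h$ with leading coefficient $\cosh u+f(s)\sinh u$, which is nonzero for $|u|$ small, so a local solution exists.

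For the curvature analysis I use the general parallel-surface shape operator formula in a space of constant curvature $-1$,
\[
A_u=(\cosh u\,A-\sinh u\,I)(\cosh u\,I-\sinh u\,A)^{-1}.
\]
Writing $A=\lambda I+E$ with $E^2=0$ (Jordan form with repeated eigenvalue equal to the constant mean curvature $\lambda$), a short computation gives
\[
A_u=\lambda_u I+\frac{1}{(\cosh u-\lambda\sinh u)^2}\,E,\qquad \lambda_u=\frac{\lambda\cosh u-\sinh u}{\cosh u-\lambda\sinh u}.
\]
Hence $A_u$ is again non-diagonalizable with constant eigenvalue $\lambda_u$, and by the Gauss equation $K_u=-1+\det A_u=-1+\lambda_u^2$. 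The identity $\lambda_u^2-1=(\lambda^2-1)/(\cosh u-\lambda\sinh u)^2$ shows $K_u=0$ iff $K=0$, so flatness is preserved. The main obstacle in this plan is the construction of the null base curve for $M_u$, i.e., the local solvability of the Riccati ODE, which rests on $\cosh u+f\sinh u$ remaining nonzero for $|u|$ small.
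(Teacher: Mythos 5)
Your computation of the parallel surface and its shape operator, and the resulting preservation of flatness, is correct and consistent with the paper: the paper computes $\hat{\mathcal{H}}_u=-\alpha$ with $\alpha=\frac{k_2+r(u)\sinh u}{r(u)\cosh u}$, $r(u)=\cosh u+k_2\sinh u$, which is exactly your $\lambda_u$ with $\lambda=-k_2$, and your identity $\lambda_u^2-1=(\lambda^2-1)/(\cosh u-\lambda\sinh u)^2$ is a clean way to see $K_u=0\Leftrightarrow K=0$. The genuine gap is in the claim that $M_u$ \emph{is a B-scroll}. Solving your Riccati equation produces a null base curve $\hat\gamma$ and shows $M_u$ is ruled by null geodesics over it, but Definition \ref{defi1} requires more: after normalizing the ruling $W$ so that $\langle\hat\gamma',W\rangle=-1$, the vector $W$ must actually be the Cartan vector $B$ of $\hat\gamma$, which forces $\langle W,\nabla_{\hat\gamma'}\hat\gamma'\rangle=0$ (a condition on the parametrization of $\hat\gamma$, not guaranteed by nullity of $\hat\gamma'$), and the resulting second curvature $\hat k_2=\langle\nabla_{\hat\gamma'}W,\hat C\rangle$ must be constant. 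Neither condition is verified, so the identification of $M_u$ with a B-scroll is not established by your construction as written.

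The paper closes precisely this step by an indirect route: from the explicit shape operator it reads off that $M_u$ has constant mean curvature and constant $\Vert\hat h\Vert^2$, hence constant Gaussian curvature by (\ref{eq5}), with non-diagonalizable shape operator; Fact \ref{KimKim} then forces $M_u$ to be an open part of a complex circle or a B-scroll, and the complex-circle alternative is excluded via Theorem \ref{thmD} (the parallel family of a complex circle consists of complex circles, whereas $(M_u)_{-u}=M$ is a B-scroll). Your own formula $A_u=\lambda_u I+(\cosh u-\lambda\sinh u)^{-2}E$ already supplies everything needed for that argument --- constant trace, constant determinant, and a nontrivial nilpotent part with a \emph{real} repeated eigenvalue, which moreover rules out the complex circle directly, since the latter's shape operator has non-real eigenvalues. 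So either adopt the classification argument, or complete the direct construction by verifying the two missing conditions on $\hat\gamma$. Two smaller cautions: the parallel-shape-operator formula is quoted rather than derived and must be justified as an operator identity for non-diagonalizable $A$ (it does hold, and reproduces the paper's explicit computation), and for a B-scroll your $f=k_2$ is constant by definition, which you use implicitly when treating $\lambda$ as a constant.
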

Furthermore, we prove the following fact for the pseudo-hyperbolic Gauss map of one of the $n$-dimensional 
Lorentzian hypersurfaces of B-scroll-type given by of L.J. Alias, A. Ferrandez, and P. Lucas(\cite{AFL}). \\
\begin{thm}
{\sl Let $(A,B,C,Z_1,\cdots ,Z_{n-2})$ be the Cartan frame field of a null curve $\gamma(s) \subset \mathbb{H}^{n+1}_1$. 
Assume that $k_1(s)\neq 0$, $k_2^2=1$ and $k_i$ is non-zero constant for $i=3,4,\cdots ,n-2$. 
The immersion ${\bf x}:I\times \mathbb{R}\times \mathbb{R}^{n-2} \rightarrow \mathbb{H}^{n+1}_1\subset \mathbb{E}^{n+2}_2$ given by
$${\bf x}(s,t,z)=\left(1+\frac{|z|^2}{2}\right)\gamma(s)+tB(s)+\sum_{j=1}^{n-2}z_jZ_j(s)-\frac{k_2|z|^2}{2}C(s),$$
parametrizes an oriented Lorentzian hypersurface of $\mathbb{H}^{n+1}_1$. 
The pseudo-hyperbolic Gauss map of this oriented Lorentzian hypersurface is of infinite type.}
\label{thmC}
\end{thm}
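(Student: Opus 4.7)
The plan is to identify the pseudo-hyperbolic Gauss map $\tilde\nu$ with (the Hodge dual of) the unit normal field $N$ of the hypersurface, compute the iterated Laplacians $\Delta^{k}N$ explicitly using the Cartan-Frenet equations, and verify that the sequence $\{\Delta^{k}N\}_{k\ge 0}$ is a linearly independent family of vector-valued functions on the hypersurface --- which contradicts any finite spectral decomposition. Under the Hodge-star isomorphism $\bigwedge^{n+1}\mathbb{E}^{n+2}_{2}\cong\mathbb{E}^{n+2}_{2}$, the wedge $\mathbf{x}\wedge e_{1}\wedge\cdots\wedge e_{n}$ representing $\tilde\nu(p)$ corresponds, up to sign, to $N(p)$, so the type of $\tilde\nu$ coincides with the type of $N$ viewed as an $\mathbb{E}^{n+2}_{2}$-valued function.

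The first concrete step is to record the Cartan-Frenet equations of $\gamma$, which express the derivatives of $(A,B,C,Z_{1},\ldots,Z_{n-2})$ as linear combinations of the frame vectors with coefficients in $k_{1}(s),k_{2}=\pm 1,k_{3},\ldots,k_{n-2}$. Using these, compute $\mathbf{x}_{s},\mathbf{x}_{t},\mathbf{x}_{z_{j}}$, assemble the induced Lorentzian metric $g_{ab}$, its inverse, the Christoffel symbols, and hence the Laplacian $\Delta=g^{ab}(\partial_{a}\partial_{b}-\Gamma^{c}_{ab}\partial_{c})$. The normal $N$ is then pinned down by $\langle N,N\rangle=\pm 1$, $\langle N,\mathbf{x}\rangle=0$ and $\langle N,\mathbf{x}_{a}\rangle=0$ for every coordinate index $a$; this produces an explicit polynomial expression in $(t,z)$ with coefficients in the Cartan frame vectors.

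With $N$ in hand, apply the Gauss-Weingarten formula to write $\Delta N$ as an explicit combination of $\mathbf{x}$, $N$ and tangent vectors, from which one extracts a new vector $N_{1}:=\Delta N-\alpha N-\beta\mathbf{x}$ lying outside $\mathrm{Span}\{\mathbf{x},N\}$. Iterating produces $N_{j}$'s; the cleanest way to verify $\dim\mathrm{Span}\{N_{j}\}_{j\ge 0}=\infty$ is to restrict to a convenient slice (for instance $t=0$, $z=0$) and project onto a fixed direction such as $C(s_{0})$ or $\gamma(s_{0})$, so that only one leading term survives. The expected mechanism --- mirroring the flat-B-scroll case in dimension three treated in Theorem \ref{thmA}(iv) --- is that because the $t$-direction of the B-scroll is null and the shape operator carries a nilpotent component under the hypothesis $k_{2}^{2}=1$, each $\Delta$-step introduces a genuinely new monomial in $(t,z)$ (or a new $s$-derivative of $k_{1}$ when $k_{1}$ is non-constant) in the projected component, so the iterates cannot satisfy any polynomial identity in $\Delta$, and infinite type follows.

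The principal obstacle is the combinatorial blow-up of $\Delta^{k}N$: cross-terms from the non-constant inverse metric, from the Christoffel symbols, and from the repeated Cartan-Frenet substitutions proliferate and in principle could cancel the expected growing leading term. Success therefore depends on choosing the scalar projection and the slicing carefully enough to kill all this noise while retaining the single unbounded invariant; verifying this raising behaviour at each step --- or, equivalently, exhibiting an explicit infinite chain of successive new vectors $N_{0},N_{1},N_{2},\ldots$ --- is where the real work lies.
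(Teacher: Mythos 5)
Your reduction of $\tilde\nu$ to the unit normal $N$ via the Hodge-star isomorphism $\bigwedge^{n+1}\mathbb{E}^{n+2}_2\cong\mathbb{E}^{n+2}_2$ is legitimate for a hypersurface and would be a harmless simplification, and your preliminary computations (the frame derivatives, the normal $N$, the shape operator) coincide with what the paper does. But the central mechanism you propose --- that ``each $\Delta$-step introduces a genuinely new monomial,'' so that $\{\Delta^kN\}_{k\ge0}$ is an infinite linearly independent family and the iterates ``cannot satisfy any polynomial identity in $\Delta$'' --- is false for this hypersurface, and the plan built on it cannot be completed. Under the hypothesis $k_2^2=1$ one finds $\hat{\mathcal H}=-k_2$ and $\Vert\hat h\Vert^2=nk_2^2=n$, both constant, the normal bundle is flat, and $D\hat H=0$; feeding this into (\ref{eq1}) and (\ref{eq3}) gives
\begin{equation*}
\Delta\tilde\nu=n\,\tilde\nu-nk_2\,W,\qquad \Delta W=nk_2\,\tilde\nu-n\,W,\qquad W:=N\wedge e_1\wedge\cdots\wedge e_n,
\end{equation*}
so the $2\times2$ matrix governing $\Delta$ on $\mathrm{Span}\{\tilde\nu,W\}$ has trace $0$ and determinant $-n^2(1-k_2^2)=0$: it is nilpotent, and $\Delta^2\tilde\nu=0$ while $\Delta\tilde\nu\neq0$. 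Your sequence of iterated Laplacians is therefore $\tilde\nu,\ \Delta\tilde\nu,\ 0,\ 0,\dots$, and the ``growing leading term'' you hope to isolate on a slice does not exist.

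The theorem is still true, but for the opposite reason: a smooth map of finite type satisfying $\Delta^2\phi=0$ must in fact be harmonic (write $\phi=\sum\phi_i$ with $\Delta\phi_i=\lambda_i\phi_i$; then $0=\sum\lambda_i^2\phi_i$ forces every $\lambda_i=0$, hence $k=1$ and $\lambda_1=0$ --- this is Lemma~\ref{lemm1}). Since $\Delta\tilde\nu\neq0$, the map $\tilde\nu$ cannot be of finite type. This is exactly how the paper concludes, and it is also how the flat B-scroll case (\rnum{4}) of Theorem~\ref{thmA} --- the very case you cite as your model --- is handled there; the degenerate (repeated-root) polynomial identity is the proof, not an obstacle to it. To repair your argument you would keep all of your setup through the computation of the shape operator, observe that it yields constant $\hat{\mathcal H}$ and $\Vert\hat h\Vert^2$ with $\Vert\hat h\Vert^2=n$ and scalar flatness via (\ref{eq5}), compute $\Delta\tilde\nu$ and $\Delta^2\tilde\nu$ once each, and then replace the linear-independence claim by the finite-type lemma above.
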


\newpage

$\,$

\vspace{0.5truecm}

\centerline{
$\displaystyle{



\section{Basic notions and facts}

Let $\langle\,\,,\,\,\rangle$  be the non-degenerate symmetric linear form of the $m$-dimensional real vector space $\mathbb{R}^m$ defined by
$$\langle v,w\rangle =-\sum_{i=1}^{s}v_iw_i+\sum_{j=s+1}^{m}v_jw_j,$$
where $v=(v_1,v_2,\cdots ,v_m)$ and $w=(w_1,w_2,\cdots ,w_m)$.
Let  $\tilde{g}$ be the pseudo-Euclidean metric of index $s$ on the $m$-dimensional affine space $\mathbb{R}^m$ induced from $\langle , \rangle$ , i.e.
$$\tilde{g}=-\sum_{i=1}^{s}dx_i^2+\sum_{j=s+1}^{m}dx_j^2,$$
where $(x_1,\cdots ,x_m)$ denotes an affine coordinate of $\mathbb{R}^m$. 
Denote $(\mathbb{R}^m,\tilde{g})$ by $\mathbb{E}^m_s$.
For $x_0\in \mathbb{E}^m_s$ and $c>0$ , we put\\
$$\mathbb{S}^{m-1}_{s}(x_0,c)=\{ x=(x_1,\cdots ,x_m)\in \mathbb{E}^m_s \, \vert \, \langle \overrightarrow{x_0x},\overrightarrow{x_0x}\rangle=\frac{1}{c}\, \}$$
and
$$\mathbb{H}^{m-1}_{s-1}(x_0,-c)=\{ x=(x_1,\cdots ,x_m)\in \mathbb{E}^m_s\, \vert \, \langle \overrightarrow{x_0x},\overrightarrow{x_0x}\rangle=-\frac{1}{c}\, \}$$
Then $\mathbb{S}^{m-1}_{s}(x_0,c)$ (resp. $\mathbb{H}^{m-1}_{s-1}(x_0,-c)$) is the $(m-1)$-dimensional pseudo-Riemannian submanifold in $\mathbb{E}^m_s$ 
of constant curvature $c$ and index $s$ (resp. constant curvature $-c$ and index $s-1$), 
called {\it a pseudo-sphere} (resp. {\it a pseudo-hyperbolic space}). 
In particular, $\mathbb{S}^{m-1}_1(c)$ (resp. $\mathbb{H}^{m-1}_1(-c)$) is called {\it a de Sitter space} (resp. {\it an anti-de Sitter space}). 
For the simplicity, denote $\mathbb{S}^{m-1}_{s}(0,c)$ and $\mathbb{H}^{m-1}_{s-1}(0,-c)$ by $\mathbb{S}^{m-1}_{s}(c)$ and $\mathbb{H}^{m-1}_{s-1}(-c)$, 
where $0$ is the origin of $\mathbb{E}^m_s$. 
Furthermore, we abbreviate $\mathbb{S}^{m-1}_s(1)$ (resp. $\mathbb{H}^{m-1}_{s-1}(-1)$) by $\mathbb{S}^{m-1}_s$ (resp. $\mathbb{H}^{m-1}_{s-1}$).

Let $M_t$ be an $n$-dimensional pseudo-Riemannian submanifold of index $t$ in $\mathbb{E}^m_s$ immersed by ${\bf x}$, for the simplicity, we denote ${\bf x}(M_t)$ by $M_t$. 
Let $\nabla$ and $\tilde{\nabla}$ be the Levi-Civita connections of $M_t$ and $\mathbb{E}^m_s$, respectively. 
Also, let $\tilde{\nabla}^{\bf x}$ the induced connection of $\tilde{\nabla}$ by ${\bf x}$ and $\nabla^\bot$ the normal connection of $M_t$.\\
For the simplicity, we denote all metrics by the common symbol $\langle\,\,,\,\,\rangle$. 
We take a local orthonormal frame field $(e_1,\cdots ,e_n)$ of the tangent bundle $TM_t$ of $M_t$ defined on an open set $U$ of $M_t$ and 
a local orthonormal frame field $(e_{n+1},\cdots ,e_m)$ of the normal bundle $T^\bot M_t$ of $M_t$ defined on $U$. 
Put $\epsilon_A:=\langle e_A,e_A\rangle=\pm 1$ ($A=1,\ldots ,m$). 
Let $\{ \tilde{\omega}_{AB}\}_{A,B=1,\cdots ,m}$ be the connection form of $\tilde{\nabla}$ with respect to $(e_1, \cdots ,e_n,e_{n+1},\cdots ,e_m)$, 
$\{ \omega_{ij}\}_{i,j=1,\cdots ,n}$ the connection form of $\nabla$ with respect to $(e_1,\cdots ,e_n)$ 
and $\{ \omega_{rs}^\bot \}_{r,s=n+1,\cdots ,m}$ the connection form of $\nabla^\bot$ with respect to $(e_{n+1},\cdots ,e_m)$, that is, 
$$\tilde{\nabla}_{X}e_A=\sum_{B=1}^m\epsilon_B\tilde{\omega}_{AB}(X)e_B$$
$$\nabla_{X}e_i=\sum_{j=1}^n\epsilon_j\omega_{ij}(X)e_j$$
and
$$\nabla^\bot_{X}e_r=\sum_{s=n+1}^m\epsilon_s\omega^\bot_{rs}(X)e_s$$
for $X\in TM_t$. 
Also, let $h^r_{ij}$ be the components of the second fundamental form $h$ of $M_t$ with respect to $(e_1,\cdots ,e_n,e_{n+1},\cdots ,e_m)$, 
that is, $h(e_i,e_j)=\sum_{r=n+1}^{m}h_{ij}^{r}e_r$ and $A_r$ the shape operator of $M_t$ for $e_r$, 
that is, $A_r=A_{e_r}$, where $A$ denotes the shape tensor of $M_t$. Then we have $\omega_{AB}+\omega_{BA}=0$,
\begin{equation}
\tilde{\nabla} _{e_k} e_i=\sum_{j=1}^{n} \epsilon_j \omega_{ij}(e_k)e_j + \sum_{r=n+1}^m \epsilon_r h_{ki}^r e_r \\
\end{equation}
and
\begin{equation}
\tilde{\nabla} _{e_k} e_r=-A_r(e_k)+\sum_{s=n+1}^{m} \epsilon_s \omega^\bot_{rs}(e_k) e_s
\end{equation}
The mean curvature vector $H$ and the scalar curvature $S$ are defined by;
\begin{equation}
H=\frac{1}{n}\sum_{r=n+1}^m \epsilon_r\mathrm{tr}A_r e_r
\end{equation}
and
\begin{equation}
S=n^2\langle H,H\rangle-\Vert h\Vert^2, \label{eq8}
\end{equation}
where $\Vert h\Vert^2=\sum_{i,j=1}^{n}\sum_{r=n+1}^{m} \epsilon_i\epsilon_j\epsilon_r h_{ij}^r h_{ji}^r$. 
Denote by $\hat{\nabla}h$ the covariant derivative of $h$ with respect to $\nabla$ and $\nabla^\bot$. 
Let 
\begin{eqnarray}
(\hat{\nabla}_{e_k}h)(e_i,e_j)=\sum_{r=n+1}^m\epsilon_r h_{ij;k}^re_r.
\end{eqnarray}
Then we have
\begin{equation}
h_{ij;k}^{r}=h_{jk;i}^r,
\end{equation}
\begin{equation}
h_{jk;i}^{r}=e_i(h_{jk}^r)-\sum_{l=1}^n \epsilon_l (h_{lk}^r \omega_{jl}(e_i)+h_{lj}^r \omega_{kl}(e_i))+\sum_{s=n+1}^m \epsilon_s h_{jk}^s \omega^\bot_{sr}(e_i),
\end{equation}
\begin{equation}
R^\bot(e_j,e_k;e_r,e_s)=\langle [A_r,A_s](e_j),e_k\rangle =\sum_{i=1}^n \epsilon_i(h_{ik}^r h_{ij}^s-h_{ij}^r h_{ik}^s),
\end{equation}
where $R^\bot$ is the normal curvature tensor of $M_t$.

Let ${\bf x}:M_t\hookrightarrow \mathbb{S}^{m-1}_{s}(c)$ or $\mathbb{H}^{m-1}_{s-1}(-c)\subset \mathbb{E}^m_{s}$ be an isometric immersion. 
Denote by $h$ and $H$ the second fundamental form and the mean curvature vector of $M_t$ in $\mathbb{E}^m_s$, 
$\hat{h}$ and $\hat{H}$ be the second fundamental form and of the mean curvature vector of $M_t$ in $\mathbb{S}^{m-1}_s(c)$ or $\mathbb{H}^{m-1}_{s-1}(-c)$.\\
We have
\begin{eqnarray}
H&=&\hat {H}-\epsilon c{\bf x} ,\\ 
h&=&\hat {h}(X,Y)-\epsilon c\langle X,Y\rangle {\bf x} ,
\end{eqnarray}
and (\ref{eq8}) is rewritten as
\begin{eqnarray}
S=\epsilon cn(n-1)+n^2\langle \hat {H},\hat {H}\rangle -\Vert \hat {h} \Vert^2 ,\label{eq5}
\end{eqnarray}
where $\epsilon=+1$ if in $\mathbb{S}^{m-1}_{s}(c)$ and $\epsilon=-1$ if in $\mathbb{H}^{m-1}_{s-1}(-c)$.\\
The gradient vector field $\nabla f$ of $f\in C^\infty (M_t)$ is defined by $\nabla f=\sum_{i=1}^n \epsilon_i e_i(f)e_i$, 
and Laplacian operator $\Delta$ of $M_t$ with respect to the induced metric is given by $\Delta =\sum_{i=1}^n \epsilon_i(\nabla _{e_i}e_i -e_ie_i)$.

\begin{defi}
{\rm Let $\phi : M_t\rightarrow \mathbb{H}^{m-1}_{s-1}(-c)\subset \mathbb{E}^m_s$ (resp. $\phi : M_t\rightarrow \mathbb{S}^{m-1}_s(c)\subset \mathbb{E}^m_s$) 
be a smooth map. 
Then $\phi$ is said to be {\it of finite type} in $\mathbb{H}^{m-1}_{s-1}$ (resp. in $\mathbb{S}^{m-1}_{s}$) 
if $\phi$ has the following spectral decomposition as follows;
$$\phi =\phi_1+\phi_2+\cdots +\phi_k$$
where $\phi_i : M_t\rightarrow \mathbb{E}^m_s$'s are non-constant map such that 
$\Delta \phi_i=\lambda_i\phi_i$ with $\lambda_i\in \mathbb{R}$, $i=1,2,\ldots ,k$.
If $\phi$ has this spectral decomposition and $\lambda_i$'s are mutually distinct constant, then the map $\phi$ is said to be {\it of $k$-type},
and  when one of $\lambda_i$'s is equal to zero, the map $\phi$ is said to be {\it of null $k$-type.}}
\end{defi}
For a map of finite type, the following fact holds.
\begin{lemm}
{\rm Let $\phi : M_t\rightarrow \mathbb{H}^{m-1}_{s-1}$ or $\mathbb{S}^{m-1}_s$ be a smooth map. 
If $\Delta ^2\phi=0$, then $\Delta \phi=0$ or $\phi$ is of infinite type.} \label{lemm1}
\end{lemm}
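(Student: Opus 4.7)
The plan is to argue by contrapositive: assume $\phi$ is of finite type and show that $\Delta^2\phi=0$ forces $\Delta\phi=0$. By the definition recalled just above, a finite type map has a spectral decomposition $\phi=\phi_1+\cdots+\phi_k$ with non-constant $\phi_i\colon M_t\to\mathbb{E}^m_s$ satisfying $\Delta\phi_i=\lambda_i\phi_i$ and with the $\lambda_i$ mutually distinct. Applying $\Delta$ componentwise (and commuting with finite sums) gives
\begin{equation*}
\Delta\phi=\sum_{i=1}^{k}\lambda_i\phi_i,\qquad \Delta^2\phi=\sum_{i=1}^{k}\lambda_i^{2}\phi_i.
\end{equation*}

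The key ingredient is then the linear independence of $\phi_1,\dots,\phi_k$ as $\mathbb{E}^m_s$-valued functions on $M_t$. Suppose $\sum_{i}c_i\phi_i=0$ for constants $c_i\in\mathbb{R}$. Applying $\Delta^{j}$ for $j=0,1,\dots,k-1$ yields the system $\sum_{i}c_i\lambda_i^{j}\phi_i=0$; the coefficient matrix $(\lambda_i^{j})$ is Vandermonde, hence invertible because the $\lambda_i$ are pairwise distinct, so $c_i\phi_i=0$ for every $i$. Since each $\phi_i$ is non-constant (in particular non-zero), we obtain $c_i=0$, establishing the independence.

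Combining these two observations, $\Delta^2\phi=0$ gives $\sum_i\lambda_i^{2}\phi_i=0$, so $\lambda_i^{2}=0$, i.e.\ $\lambda_i=0$ for all $i$. Because the $\lambda_i$ are required to be mutually distinct, this is only possible when $k=1$ and $\lambda_1=0$, in which case $\Delta\phi=\lambda_1\phi_1=0$. Equivalently, if $\Delta^2\phi=0$ but $\Delta\phi\neq0$, then $\phi$ cannot admit such a finite spectral decomposition, so $\phi$ is of infinite type.

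The argument is essentially a routine application of the Vandermonde trick; the only step worth double-checking is that $\Delta$ really acts componentwise so that an $\mathbb{E}^m_s$-valued eigenfunction reduces to a collection of scalar eigenfunctions for the same eigenvalue, which is immediate from the definition of $\Delta$ given just before the lemma. I do not foresee a genuine obstacle.
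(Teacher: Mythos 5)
Your proposal is correct and follows essentially the same route as the paper: apply $\Delta^2$ to the spectral decomposition to get $\sum_i\lambda_i^2\phi_i=0$ and conclude $k=1$, $\lambda_1=0$. The only difference is that you explicitly justify the step from $\sum_i\lambda_i^2\phi_i=0$ to $\lambda_i=0$ via the Vandermonde linear-independence argument, which the paper's proof leaves implicit; this is a welcome addition, not a divergence.
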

\begin{proof} 
Assume that $\phi$ is of finite type and $\phi$ has the following spectral decomposition; 
$\phi=\phi_1+\phi_2+\cdots +\phi_k$ with $\Delta \phi_i=\lambda_i\phi_i$ for $\lambda_i \in \mathbb{R}$ and $i=1,2,\cdots ,k$. 
Then we have 
$$0=\Delta ^2\phi=\lambda_1^2\phi_1+\cdots +\lambda_k^2\phi_k.$$
Therefore we have $k=1$ and $\lambda_1=0$, that is, $\Delta \phi=0$.
\end{proof}

Let $\mathbb{C}^{n+1}$ be the $(n+1)$-dimensional complex vector space which is identified with $\mathbb{R}^{2n+2}$. 
Define a non-degenerate symmetric bilinear form $\langle\,\,,\,\,\rangle$ of the $\mathbb{C}^{n+1} (=\mathbb{R}^{2n+2})$ by 
\begin{equation}
\hspace{30pt} \langle z, w\rangle=\mathrm{Re} \left(\sum_{i=1}^{n+1}z_iw_i\right) \quad (z=(z_1,\cdots ,z_{n+1}), w=(w_1, \cdots ,w_{n+1}) \in \mathbb{C})
\end{equation} 
Note that $\langle z,z\rangle =\sum_{i=1}^{n+1} x_i^2-\sum_{i=1}^{n+1} y_i^2$ when $z=(x_1+\sqrt{-1}y_1, \cdots , x_{n+1}+\sqrt{-1}y_{n+1})$ 
($x_i, y_i \in \mathbb{R}$ $i=1,\cdots ,n+1$)). 
Let $\tilde{g}$ be a pseudo-Euclidean metric of index $n+1$ on the $(2n+2)$-dimensional affine space $\mathbb{R}^{2n+2} (=\mathbb{C}^{n+1})$ induced from $\langle\,\,,\,\,\rangle$. 

\begin{defi}
{\rm Fix a non-zero complex number number $\kappa$.
We put
$$S^n_{\mathbb{C}}(\kappa)=\{ (z_1, z_2, \cdots ,z_{n+1}) \in \mathbb{C}^{n+1}=\mathbb{E}^{2n+2}_{n+1}\, \vert \, \sum_{i=1}^{n+1}z_i^2=\kappa \} ,$$
that is, $\sum_{i=1}^{n+1}(x_i^2-y_i^2)=\mathrm{Re}(\kappa)$ and $2\sum_{i=1}^{n+1}x_iy_i=\mathrm{Im}(\kappa)$ for $z_i=x_i+\sqrt{-1}y_i$ ($x_i, y_i \in \mathbb{R}$). 
This submanifold $S^n_{\mathbb{C}}(\kappa)$ is called {\it a complex sphere of radius $\kappa$}. 
In particular, when $n=1$, it is {\it a complex circle of radius $\kappa$}. 
}
\end{defi}

\begin{remark}
$$S^n_{\mathbb{C}}(\kappa) \subset \left \{
\begin{array}{ll}
\mathbb{S}^{2n+1}_{n+1}\left( \mathrm{Re}(\kappa) \right) & (\mbox{if} \quad \mathrm{Re}(\kappa)>0)\\
& \\
\mathbb{H}^{2n+1}_{n}\left( \mathrm{Re}(\kappa) \right) & (\mbox{if} \quad \mathrm{Re}(\kappa)<0)
\end{array}
\right.
$$
\end{remark}
The complex circle $S^1_{\mathbb{C}}(\kappa) \subset \mathbb{H}^3_1$ is parameterized as 
${\bf x}(z)=\sqrt{\kappa}(\cos z,\sin,z)$ \,($z\in \mathbb{C}$).

\begin{defi}
{\rm
A tangent vector $v$ of pseudo-Riemannian manifold $M$ is said to be}
$$\begin{array}{lll}
{\it spacelike} & {\rm if}& \langle v,v\rangle >0 \quad {\rm or}\quad  v=0, \\
{\it timelike} &{\rm if}& \langle v,v\rangle <0, \\
{\it null} &{\rm if}& \langle v,v\rangle =0 \quad {\rm and}\quad v\neq 0.
\end{array}$$
{\rm
The category into which a given tangent vector falls is called its {\it causal character}. The causal character of a curve $\gamma$ in $M$ is that of the velocity $\gamma'$.}
\end{defi}

\begin{defi}
{\rm Let $\gamma(s)$ be a null curve in $\mathbb{S}^3_1 (\subset \mathbb{E}^4_{1})$ or $\mathbb{H}^3_1 (\subset \mathbb{E}^4_2)$. 
Let $(A,B,C)$ be a tangent frame field of $\mathbb{S}^3_1$ or $\mathbb{H}^3_1$ along $\gamma$ such that;
\begin{eqnarray}
\begin{array}{l}
\dot{\gamma}(s)=A(s),\\
\dot{A}(s)= \tilde{\nabla}_{\dot{\gamma}} A(s)+\langle A(s),A(s)\rangle \hat{H}_\gamma=k_1(s)C(s),  \\
\dot{C}(s)= \tilde{\nabla}_{\dot{\gamma}} C(s)+\langle A(s),C(s)\rangle \hat{H}_\gamma=k_2A(s)+k_1(s)B(s), \nonumber \\
\dot{B}(s)= \tilde{\nabla}_{\dot{\gamma}} B(s)+\langle A(s),B(s)\rangle \hat{H}_\gamma=k_2C(s)+\epsilon \gamma, \nonumber 
\end{array}
\end{eqnarray}
where $\epsilon=+1$ if in $\mathbb{S}^{3}_{1}$, $\epsilon=-1$ if in $\mathbb{H}^{3}_{1}$, $k_1$ and $k_2$ are 
positive-valued functions.  This frame field $(A,B,C)$ is called the {\it Cartan frame field along} $\gamma$.  
The the function $k_1$ and $k_2$ are called {\it the first curvature} and {\it the second curvature} of $\gamma$, 
respectively.  
Then the immersion ${\bf x}(s,t)=\gamma(s)+tB(s)$ parametrizes a Lorentzian ruled surface $M$ in $\mathbb{S}^3_1$ 
or $\mathbb{H}^3_1$.
In this paper, if $k_2$ is constant, then $M$ is called {\it the B-scroll over} $\gamma$.}
\label{defi1}
\end{defi}

\section{Pseudo-spherical and Pseudo-hyperbolic Gauss map}
Let $G(n+1,m)$ be Grassmannian manifold 
consisting of $(n+1)$-dimensional oriented non-degenerate subspaces of $\mathbb{E}^m_s$, 
Let $G(n+1,m)_{t+1}$ be the submanifold of $G(n+1,m)$ consisting of $(n+1)$-dimensional oriented non-degenerate subspaces of index $t+1$ of $\mathbb{E}^m_s$. 
Let $(\widetilde{e}_1,\cdots ,\widetilde{e}_m)$ and $(\widehat{e}_1,\cdots ,\widehat{e}_m)$ be two orthonormal frames of $\mathbb{E}^m_s$. 
Let $\widetilde{e}_{i_1}\wedge \cdots \wedge \widetilde{e}_{i_{n+1}}$ and $\widehat{e}_{j_1}\wedge \cdots \wedge \widehat{e}_{j_{n+1}}$ be two vectors in $\bigwedge^{n+1}\mathbb{E}^m_s$. 
Define an indefinite inner product $\langle \langle , \rangle \rangle$ on $\bigwedge^{n+1}\mathbb{E}^m_s$ by 
$$\langle \langle \widetilde{e}_{i_1}\wedge \cdots \wedge \widetilde{e}_{i_{n+1}} \,,\, \widehat{e}_{j_1}\wedge \cdots \wedge \widehat{e}_{j_{n+1}}\rangle \rangle 
=\mathrm{det} \left( \langle \widetilde{e}_{i_l} , \widehat{e}_{j_k}\rangle \right),\quad (l,k=1,\cdots ,n+1).$$

\newpage


\centerline{
\scriptsize{
\unitlength 0.1in
\begin{picture}( 67.7000, 43.2000)( -5.9000,-45.3000)
%
\special{pn 8}%
\special{pa 3012 4516}%
\special{pa 2988 4490}%
\special{pa 2964 4464}%
\special{pa 2940 4438}%
\special{pa 2916 4410}%
\special{pa 2894 4384}%
\special{pa 2870 4358}%
\special{pa 2846 4332}%
\special{pa 2824 4306}%
\special{pa 2800 4278}%
\special{pa 2778 4252}%
\special{pa 2754 4226}%
\special{pa 2732 4200}%
\special{pa 2710 4172}%
\special{pa 2688 4146}%
\special{pa 2668 4120}%
\special{pa 2646 4092}%
\special{pa 2626 4066}%
\special{pa 2604 4038}%
\special{pa 2584 4012}%
\special{pa 2564 3984}%
\special{pa 2546 3958}%
\special{pa 2526 3930}%
\special{pa 2508 3904}%
\special{pa 2490 3876}%
\special{pa 2472 3848}%
\special{pa 2456 3822}%
\special{pa 2440 3794}%
\special{pa 2424 3766}%
\special{pa 2408 3738}%
\special{pa 2394 3710}%
\special{pa 2380 3682}%
\special{pa 2366 3654}%
\special{pa 2354 3626}%
\special{pa 2342 3598}%
\special{pa 2330 3570}%
\special{pa 2320 3542}%
\special{pa 2310 3514}%
\special{pa 2300 3484}%
\special{pa 2292 3456}%
\special{pa 2284 3428}%
\special{pa 2278 3398}%
\special{pa 2272 3370}%
\special{pa 2268 3340}%
\special{pa 2264 3310}%
\special{pa 2260 3280}%
\special{pa 2258 3252}%
\special{pa 2258 3222}%
\special{pa 2258 3192}%
\special{pa 2258 3162}%
\special{pa 2260 3132}%
\special{pa 2262 3100}%
\special{pa 2264 3070}%
\special{pa 2268 3040}%
\special{pa 2274 3008}%
\special{pa 2278 2978}%
\special{pa 2284 2948}%
\special{pa 2290 2916}%
\special{pa 2298 2884}%
\special{pa 2306 2854}%
\special{pa 2314 2822}%
\special{pa 2322 2792}%
\special{pa 2332 2760}%
\special{pa 2340 2728}%
\special{pa 2350 2696}%
\special{pa 2360 2664}%
\special{pa 2370 2634}%
\special{pa 2380 2602}%
\special{pa 2392 2570}%
\special{pa 2402 2538}%
\special{pa 2412 2506}%
\special{pa 2424 2474}%
\special{pa 2434 2442}%
\special{pa 2446 2410}%
\special{pa 2456 2378}%
\special{pa 2468 2346}%
\special{pa 2478 2314}%
\special{pa 2490 2284}%
\special{pa 2500 2252}%
\special{pa 2510 2220}%
\special{pa 2520 2188}%
\special{pa 2530 2156}%
\special{pa 2540 2124}%
\special{pa 2548 2092}%
\special{pa 2556 2060}%
\special{pa 2566 2028}%
\special{pa 2574 1998}%
\special{pa 2580 1966}%
\special{pa 2586 1934}%
\special{pa 2594 1902}%
\special{pa 2598 1872}%
\special{pa 2604 1840}%
\special{pa 2608 1808}%
\special{pa 2612 1778}%
\special{pa 2614 1746}%
\special{pa 2616 1716}%
\special{pa 2616 1686}%
\special{pa 2616 1654}%
\special{pa 2616 1624}%
\special{pa 2614 1594}%
\special{pa 2612 1564}%
\special{pa 2608 1532}%
\special{pa 2604 1502}%
\special{pa 2598 1472}%
\special{pa 2592 1442}%
\special{pa 2584 1414}%
\special{pa 2576 1384}%
\special{pa 2568 1354}%
\special{pa 2558 1324}%
\special{pa 2548 1294}%
\special{pa 2536 1266}%
\special{pa 2526 1236}%
\special{pa 2514 1208}%
\special{pa 2500 1178}%
\special{pa 2486 1150}%
\special{pa 2472 1120}%
\special{pa 2458 1092}%
\special{pa 2444 1062}%
\special{pa 2428 1034}%
\special{pa 2412 1006}%
\special{pa 2396 976}%
\special{pa 2378 948}%
\special{pa 2362 920}%
\special{pa 2344 890}%
\special{pa 2326 862}%
\special{pa 2308 834}%
\special{pa 2290 806}%
\special{pa 2272 778}%
\special{pa 2252 748}%
\special{pa 2234 720}%
\special{pa 2214 692}%
\special{pa 2196 664}%
\special{pa 2176 636}%
\special{pa 2156 608}%
\special{pa 2140 584}%
\special{sp}%
%
\special{pn 8}%
\special{pa 1790 4138}%
\special{pa 2780 2410}%
\special{fp}%
\put(26.1000,-29.1000){\makebox(0,0)[lb]{$B(s_2)$}}%
\put(22.7000,-41.6000){\makebox(0,0)[rt]{$\dot{\gamma}(s_1)=A(s_1)$}}%
\put(26.8000,-7.8000){\makebox(0,0)[lt]{$B(s_3)$}}%
%
\special{pn 13}%
\special{pa 2560 3960}%
\special{pa 2820 3570}%
\special{fp}%
\special{sh 1}%
\special{pa 2820 3570}%
\special{pa 2766 3614}%
\special{pa 2790 3614}%
\special{pa 2800 3638}%
\special{pa 2820 3570}%
\special{fp}%
%
\special{pn 13}%
\special{pa 2550 3980}%
\special{pa 2350 3690}%
\special{fp}%
\special{sh 1}%
\special{pa 2350 3690}%
\special{pa 2372 3756}%
\special{pa 2380 3734}%
\special{pa 2404 3734}%
\special{pa 2350 3690}%
\special{fp}%
%
\special{pn 4}%
\special{ar 2550 3610 320 80  0.0000000 6.2831853}%
%
\special{pn 4}%
\special{pa 2240 3630}%
\special{pa 2540 3960}%
\special{fp}%
%
\special{pn 4}%
\special{ar 2290 2950 320 80  0.0000000 6.2831853}%
%
\special{pn 4}%
\special{pa 2620 2950}%
\special{pa 2290 3290}%
\special{fp}%
%
\special{pn 4}%
\special{pa 1982 2982}%
\special{pa 2270 3322}%
\special{fp}%
%
\special{pn 4}%
\special{ar 2410 630 300 50  0.0000000 6.2831853}%
%
\special{pn 4}%
\special{pa 2120 630}%
\special{pa 2390 990}%
\special{fp}%
%
\special{pn 4}%
\special{pa 2880 3630}%
\special{pa 2540 3980}%
\special{fp}%
\put(17.2000,-31.7000){\makebox(0,0)[rt]{light cone}}%
%
\special{pn 8}%
\special{pa 2100 3090}%
\special{pa 2082 3116}%
\special{pa 2060 3140}%
\special{pa 2038 3162}%
\special{pa 2012 3182}%
\special{pa 1984 3200}%
\special{pa 1954 3212}%
\special{pa 1924 3222}%
\special{pa 1892 3228}%
\special{pa 1860 3230}%
\special{pa 1828 3230}%
\special{pa 1798 3228}%
\special{pa 1766 3222}%
\special{pa 1750 3220}%
\special{sp -0.045}%
%
\special{pn 8}%
\special{pa 3012 4516}%
\special{pa 2988 4490}%
\special{pa 2964 4464}%
\special{pa 2940 4438}%
\special{pa 2916 4410}%
\special{pa 2894 4384}%
\special{pa 2870 4358}%
\special{pa 2846 4332}%
\special{pa 2824 4304}%
\special{pa 2800 4278}%
\special{pa 2778 4252}%
\special{pa 2754 4226}%
\special{pa 2732 4198}%
\special{pa 2710 4172}%
\special{pa 2688 4146}%
\special{pa 2666 4118}%
\special{pa 2646 4092}%
\special{pa 2624 4064}%
\special{pa 2604 4038}%
\special{pa 2584 4010}%
\special{pa 2564 3984}%
\special{pa 2544 3956}%
\special{pa 2526 3930}%
\special{pa 2508 3902}%
\special{pa 2490 3876}%
\special{pa 2472 3848}%
\special{pa 2454 3820}%
\special{pa 2438 3792}%
\special{pa 2422 3766}%
\special{pa 2408 3738}%
\special{pa 2392 3710}%
\special{pa 2378 3682}%
\special{pa 2366 3654}%
\special{pa 2352 3626}%
\special{pa 2340 3598}%
\special{pa 2330 3570}%
\special{pa 2318 3542}%
\special{pa 2308 3512}%
\special{pa 2300 3484}%
\special{pa 2292 3456}%
\special{pa 2284 3426}%
\special{pa 2278 3398}%
\special{pa 2272 3368}%
\special{pa 2268 3340}%
\special{pa 2264 3310}%
\special{pa 2260 3280}%
\special{pa 2258 3252}%
\special{pa 2258 3222}%
\special{pa 2258 3192}%
\special{pa 2258 3162}%
\special{pa 2260 3132}%
\special{pa 2262 3102}%
\special{pa 2266 3070}%
\special{pa 2270 3040}%
\special{pa 2274 3010}%
\special{pa 2280 2980}%
\special{pa 2286 2948}%
\special{pa 2292 2918}%
\special{pa 2300 2886}%
\special{pa 2308 2856}%
\special{pa 2316 2824}%
\special{pa 2324 2794}%
\special{pa 2332 2762}%
\special{pa 2342 2730}%
\special{pa 2352 2698}%
\special{pa 2362 2668}%
\special{pa 2372 2636}%
\special{pa 2382 2604}%
\special{pa 2394 2572}%
\special{pa 2404 2542}%
\special{pa 2416 2510}%
\special{pa 2426 2478}%
\special{pa 2438 2446}%
\special{pa 2448 2414}%
\special{pa 2460 2382}%
\special{pa 2470 2350}%
\special{pa 2482 2318}%
\special{pa 2492 2286}%
\special{pa 2502 2254}%
\special{pa 2514 2222}%
\special{pa 2524 2192}%
\special{pa 2532 2160}%
\special{pa 2542 2128}%
\special{pa 2552 2096}%
\special{pa 2560 2064}%
\special{pa 2568 2032}%
\special{pa 2576 2000}%
\special{pa 2582 1968}%
\special{pa 2590 1938}%
\special{pa 2596 1906}%
\special{pa 2600 1874}%
\special{pa 2606 1842}%
\special{pa 2610 1812}%
\special{pa 2612 1780}%
\special{pa 2616 1748}%
\special{pa 2616 1718}%
\special{pa 2618 1686}%
\special{pa 2618 1656}%
\special{pa 2616 1624}%
\special{pa 2614 1594}%
\special{pa 2612 1562}%
\special{pa 2608 1532}%
\special{pa 2602 1502}%
\special{pa 2596 1470}%
\special{pa 2590 1440}%
\special{pa 2582 1410}%
\special{pa 2574 1380}%
\special{pa 2564 1350}%
\special{pa 2554 1320}%
\special{pa 2544 1290}%
\special{pa 2532 1260}%
\special{pa 2520 1232}%
\special{pa 2508 1202}%
\special{pa 2494 1172}%
\special{pa 2482 1142}%
\special{pa 2466 1114}%
\special{pa 2452 1084}%
\special{pa 2436 1056}%
\special{pa 2422 1026}%
\special{pa 2406 998}%
\special{pa 2388 970}%
\special{pa 2372 940}%
\special{pa 2354 912}%
\special{pa 2338 884}%
\special{pa 2320 856}%
\special{pa 2302 828}%
\special{pa 2284 800}%
\special{pa 2266 772}%
\special{pa 2248 744}%
\special{pa 2230 716}%
\special{pa 2212 688}%
\special{pa 2192 662}%
\special{pa 2174 634}%
\special{pa 2156 606}%
\special{pa 2138 580}%
\special{pa 2120 552}%
\special{pa 2102 526}%
\special{pa 2084 498}%
\special{pa 2066 472}%
\special{pa 2048 446}%
\special{pa 2030 420}%
\special{pa 2012 392}%
\special{pa 1994 366}%
\special{pa 1976 340}%
\special{pa 1960 314}%
\special{pa 1942 288}%
\special{pa 1930 270}%
\special{sp}%
%
\special{pn 13}%
\special{pa 2262 3290}%
\special{pa 2220 3050}%
\special{fp}%
\special{sh 1}%
\special{pa 2220 3050}%
\special{pa 2212 3120}%
\special{pa 2230 3104}%
\special{pa 2252 3112}%
\special{pa 2220 3050}%
\special{fp}%
%
\special{pn 4}%
\special{pa 1790 4138}%
\special{pa 2780 2410}%
\special{fp}%
%
\special{pn 13}%
\special{pa 2270 3302}%
\special{pa 2520 2900}%
\special{fp}%
\special{sh 1}%
\special{pa 2520 2900}%
\special{pa 2468 2946}%
\special{pa 2492 2946}%
\special{pa 2502 2968}%
\special{pa 2520 2900}%
\special{fp}%
\put(30.7000,-45.3000){\makebox(0,0)[lt]{$\gamma$}}%
\put(29.4000,-36.3000){\makebox(0,0)[lb]{$B(s_1)$}}%
\put(21.8000,-7.7000){\makebox(0,0)[rt]{$\dot{\gamma}(s_3)=A(s_3)$}}%
%
\special{pn 13}%
\special{pa 2550 3980}%
\special{pa 2350 3690}%
\special{fp}%
\special{sh 1}%
\special{pa 2350 3690}%
\special{pa 2372 3756}%
\special{pa 2380 3734}%
\special{pa 2404 3734}%
\special{pa 2350 3690}%
\special{fp}%
%
\special{pn 4}%
\special{ar 2550 3610 320 80  0.0000000 6.2831853}%
%
\special{pn 4}%
\special{pa 2240 3630}%
\special{pa 2540 3960}%
\special{fp}%
%
\special{pn 8}%
\special{pa 6180 1162}%
\special{pa 4770 2842}%
\special{fp}%
%
\special{pn 4}%
\special{pa 5356 1110}%
\special{pa 6156 1828}%
\special{fp}%
%
\special{pn 4}%
\special{pa 5058 1460}%
\special{pa 5860 2178}%
\special{fp}%
%
\special{pn 4}%
\special{pa 4778 1836}%
\special{pa 5578 2554}%
\special{fp}%
%
\special{pn 4}%
\special{pa 4480 2168}%
\special{pa 5282 2886}%
\special{fp}%
%
\special{pn 13}%
\special{pa 5860 1566}%
\special{pa 5636 1364}%
\special{fp}%
\special{sh 1}%
\special{pa 5636 1364}%
\special{pa 5672 1424}%
\special{pa 5676 1400}%
\special{pa 5700 1394}%
\special{pa 5636 1364}%
\special{fp}%
%
\special{pn 13}%
\special{pa 5554 1898}%
\special{pa 5330 1696}%
\special{fp}%
\special{sh 1}%
\special{pa 5330 1696}%
\special{pa 5366 1756}%
\special{pa 5370 1732}%
\special{pa 5394 1726}%
\special{pa 5330 1696}%
\special{fp}%
%
\special{pn 13}%
\special{pa 5260 2260}%
\special{pa 5038 2060}%
\special{fp}%
\special{sh 1}%
\special{pa 5038 2060}%
\special{pa 5074 2118}%
\special{pa 5078 2096}%
\special{pa 5100 2090}%
\special{pa 5038 2060}%
\special{fp}%
%
\special{pn 13}%
\special{pa 4968 2596}%
\special{pa 4744 2396}%
\special{fp}%
\special{sh 1}%
\special{pa 4744 2396}%
\special{pa 4780 2454}%
\special{pa 4784 2432}%
\special{pa 4808 2426}%
\special{pa 4744 2396}%
\special{fp}%
\put(33.8000,-16.7000){\makebox(0,0)[lb]{$\gamma$ $\rightarrow$ null geodesic}}%
%
\special{pn 4}%
\special{ar 2290 2950 320 80  0.0000000 6.2831853}%
%
\special{pn 4}%
\special{pa 2620 2950}%
\special{pa 2290 3290}%
\special{fp}%
%
\special{pn 4}%
\special{ar 2410 630 300 50  0.0000000 6.2831853}%
%
\special{pn 4}%
\special{pa 2120 630}%
\special{pa 2390 990}%
\special{fp}%
%
\special{pn 13}%
\special{pa 5850 1570}%
\special{pa 6080 1280}%
\special{fp}%
\special{sh 1}%
\special{pa 6080 1280}%
\special{pa 6024 1320}%
\special{pa 6048 1322}%
\special{pa 6054 1346}%
\special{pa 6080 1280}%
\special{fp}%
%
\special{pn 13}%
\special{pa 5560 1900}%
\special{pa 5790 1610}%
\special{fp}%
\special{sh 1}%
\special{pa 5790 1610}%
\special{pa 5734 1650}%
\special{pa 5758 1652}%
\special{pa 5764 1676}%
\special{pa 5790 1610}%
\special{fp}%
%
\special{pn 13}%
\special{pa 5260 2260}%
\special{pa 5490 1970}%
\special{fp}%
\special{sh 1}%
\special{pa 5490 1970}%
\special{pa 5434 2010}%
\special{pa 5458 2012}%
\special{pa 5464 2036}%
\special{pa 5490 1970}%
\special{fp}%
%
\special{pn 13}%
\special{pa 4970 2600}%
\special{pa 5200 2310}%
\special{fp}%
\special{sh 1}%
\special{pa 5200 2310}%
\special{pa 5144 2350}%
\special{pa 5168 2352}%
\special{pa 5174 2376}%
\special{pa 5200 2310}%
\special{fp}%
%
\special{pn 8}%
\special{pa 3320 1760}%
\special{pa 4520 1760}%
\special{fp}%
\special{sh 1}%
\special{pa 4520 1760}%
\special{pa 4454 1740}%
\special{pa 4468 1760}%
\special{pa 4454 1780}%
\special{pa 4520 1760}%
\special{fp}%
\put(50.7000,-28.9000){\makebox(0,0)[rt]{null geodesic}}%
\put(17.5000,-34.1000){\makebox(0,0)[rt]{$\dot{\gamma}(s_2)=A(s_2)$}}%
%
\special{pn 4}%
\special{pa 1990 1480}%
\special{pa 3040 210}%
\special{fp}%
\special{pa 2250 4440}%
\special{pa 3080 3140}%
\special{fp}%
%
\special{pn 8}%
\special{pa 2090 4130}%
\special{pa 2098 4098}%
\special{pa 2104 4066}%
\special{pa 2114 4036}%
\special{pa 2124 4006}%
\special{pa 2136 3978}%
\special{pa 2154 3950}%
\special{pa 2172 3924}%
\special{pa 2194 3902}%
\special{pa 2218 3880}%
\special{pa 2244 3858}%
\special{pa 2270 3840}%
\special{pa 2298 3822}%
\special{pa 2326 3806}%
\special{pa 2354 3790}%
\special{pa 2382 3776}%
\special{pa 2390 3770}%
\special{sp -0.045}%
%
\special{pn 13}%
\special{pa 2400 990}%
\special{pa 2670 670}%
\special{fp}%
\special{sh 1}%
\special{pa 2670 670}%
\special{pa 2612 708}%
\special{pa 2636 712}%
\special{pa 2642 734}%
\special{pa 2670 670}%
\special{fp}%
%
\special{pn 13}%
\special{pa 2400 990}%
\special{pa 2180 600}%
\special{fp}%
\special{sh 1}%
\special{pa 2180 600}%
\special{pa 2196 668}%
\special{pa 2206 646}%
\special{pa 2230 648}%
\special{pa 2180 600}%
\special{fp}%
%
\special{pn 8}%
\special{ar 1780 2990 510 470  0.4275689 0.4520587}%
\special{ar 1780 2990 510 470  0.5255281 0.5500179}%
\special{ar 1780 2990 510 470  0.6234872 0.6479770}%
\special{ar 1780 2990 510 470  0.7214464 0.7459362}%
\special{ar 1780 2990 510 470  0.8194056 0.8438954}%
\special{ar 1780 2990 510 470  0.9173648 0.9418546}%
\special{ar 1780 2990 510 470  1.0153240 1.0398138}%
\special{ar 1780 2990 510 470  1.1132832 1.1377730}%
\special{ar 1780 2990 510 470  1.2112423 1.2357321}%
\special{ar 1780 2990 510 470  1.3092015 1.3336913}%
\special{ar 1780 2990 510 470  1.4071607 1.4316505}%
\special{ar 1780 2990 510 470  1.5051199 1.5296097}%
\end{picture}%
}\hspace{4.5truecm}}

\vspace{1truecm}

\centerline{Figure 2 : B-scroll over $\gamma$}

\vspace{1truecm}


Therefore, we may identify  $\bigwedge^{n+1}\mathbb{E}^m_s$ with the pseudo-Euclidean space $\mathbb{E}^N_q$ for some positive integer $q$, where $N=\binom{m}{n+1}$. 
The Grassmannian manifold $G(n+1,m)_{t+1}$ can be imbedded into a pseudo-Euclidean space $\bigwedge^{n+1} \mathbb{E}^m_s\simeq \mathbb{E}^N_q$ 
by assigning $\Pi \in G(n+1,m)_{t+1}$ to $\widetilde{e}_1\wedge \cdots \wedge \widetilde{e}_{n+1}$ 
where $(\widetilde{e}_1,\cdots ,\widetilde{e}_{n+1})$ is an orthonormal basis of $\Pi$ compatible with the orientation of $\Pi$. 

Let ${\bf x}:M_t\hookrightarrow  \mathbb{H}^{m-1}_{s-1} \subset \mathbb{E}^m_s$ be an isometric immersion. 
For the immersion {\bf x}, we define a map $\tilde{\nu}:M_t\rightarrow G(n+1,m)_{t+1}$ 
by 
$$\tilde{\nu}(p)={\bf x}(p)\wedge {\bf x}_*(e_1^p)\wedge {\bf x}_*(e_2^p)\wedge \cdots \wedge {\bf x}_*(e_n^p) \quad (p\in M_t),$$
where $(e_1^p,\cdots ,e_n^p)$ is an orthonormal frame of $T_pM_t$ compatible with the orientation of $M_t$. 
This map $\tilde{\nu}$ is called {\it the pseudo-hyperbolic Gauss map of {\bf x}.} 
In the sequel, we all rewrite ${\bf x}_*(e_i)$ by $e_i$. \\ 
Let $(e_1 ,\cdots ,e_n)$ be  an local orthonormal frame field of $TM_t$ compatible with the orientation of $M_t$ 
and  $(e_{n+1},\cdots,e_m)$ be an local orthonormal frame field of $T^\perp M_t$ 
 defined an open set $U$ of $M_t$, respectively.\\
The first derivative of the pseudo-hyperbolic Gauss map $\tilde{\nu}$ is given by
\begin{equation}
e_i\tilde{\nu}=\sum_{k=1}^{n}\sum_{r=n+1}^{m-1}\epsilon_rh^r_{ik}{\bf x}\wedge e_1\wedge \cdots \wedge \underbrace{e_r}_{k-th}\wedge \cdots \wedge e_n. \label{eq4}
\end{equation}

Ye\v{g}in and Dursun proved the following fact.
\begin{lemm} 
\cite{YD} Let $M_t$ be an $n$-dimensional oriented pseudo-Riemannian submanifold of index $t$ of a pseudo-hyperbolic $\mathbb{H}^{m-1}_s\subset \mathbb{E}^m_{s+1}$. 
Then the Laplacian of the pseudo-hyperbolic Gauss map $\tilde{\nu}:M_t\rightarrow G(n+1,m)\subset \mathbb{E}^N_q, N=\binom{m}{n+1}$. for some $q$ is given by
\begin{equation}
\begin{split}
\Delta \tilde{\nu}=\Vert \hat{h}\Vert ^2\tilde{\nu}
+n\hat{H}\wedge e_1\wedge \cdots \wedge e_n-n\sum_{k=1}^n{\bf x}\wedge e_1 \wedge \cdots \wedge \underbrace{D_{e_k}\hat{H}}_{k-th}\wedge \cdots \wedge e_n\\
+\sum_{\substack{i,k=1 \\ j\neq k}}^n\sum_{\substack{r,s=n+1 \\ r<s}}^{m-1}\epsilon_r\epsilon_s R^r_{sjk}{\bf x}\wedge e_1\wedge \cdots \wedge \underbrace{e_r}_{j-th} \wedge \cdots \wedge \underbrace{e_s}_{k-th} \wedge \cdots \wedge e_n
\end{split} \label{eq1}
\end{equation}
where $R^r_{sjk}=R^D(e_j,e_k;e_r,e_s)$.
\end{lemm}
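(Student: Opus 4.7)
The plan is to compute $\Delta\tilde{\nu}$ by applying the Laplace operator $\Delta = \sum_i \epsilon_i(\nabla_{e_i}e_i - e_i e_i)$ componentwise in the ambient space $\bigwedge^{n+1}\mathbb{E}^m_{s+1}$ to the first-derivative formula (\ref{eq4}), and then to reorganize the resulting sum of wedge products into the four pieces appearing on the right-hand side of (\ref{eq1}).

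First, I would differentiate (\ref{eq4}) once more in an ambient direction $e_j$, treating the wedge product by the Leibniz rule: each wedge slot in turn ($\mathbf{x}$, each tangent $e_l$ with $l\neq k$, and the inserted normal $e_r$) produces a term in which $\tilde{\nabla}_{e_j}$ of that slot must be expanded. I would then substitute the position identity $\tilde{\nabla}_{e_j}\mathbf{x} = e_j$, the Gauss formula for $\tilde{\nabla}_{e_j}e_l$, and the Weingarten formula for $\tilde{\nabla}_{e_j}e_r$. A large number of wedge products vanish because the same vector appears in two slots; the survivors are those in which differentiation has genuinely changed the type of a slot. After subtracting the correction $(\nabla_{e_i}e_i)\tilde{\nu}$ and summing over $i$ with the weights $\epsilon_i$, the survivors sort naturally into four families by wedge shape: (i) those proportional to $\tilde{\nu}$ itself; (ii) those in which the $\mathbf{x}$-slot is replaced by a normal direction; (iii) those in which exactly one tangent slot is replaced by a normal; and (iv) those in which two tangent slots are replaced by normals.

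Second, I would identify each family with the corresponding term in the stated formula. Family (i) collapses, via $\tilde{\nabla}_{e_j}\mathbf{x}=e_j$ together with the decomposition $h = \hat{h} + \langle\cdot,\cdot\rangle\mathbf{x}$ relating the ambient and intrinsic second fundamental forms, to a coefficient $\sum_{i,k,r}\epsilon_i\epsilon_r(\hat{h}^r_{ik})^2 = \|\hat{h}\|^2$ times $\tilde{\nu}$. Families (ii) and (iii) are handled together by invoking the Codazzi equation $\hat{h}^r_{ij;k}=\hat{h}^r_{jk;i}$, the Weingarten expansion of $\tilde{\nabla}_{e_j}e_r$, and the definition $\hat{H}=\tfrac{1}{n}\sum_r \epsilon_r(\mathrm{tr}\,\hat{A}_r)e_r$, which yield the piece $n\hat{H}\wedge e_1\wedge\cdots\wedge e_n$ and the piece $-n\sum_k \mathbf{x}\wedge\cdots\wedge D_{e_k}\hat{H}\wedge\cdots\wedge e_n$ respectively. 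For family (iv) I would antisymmetrize the product $\hat{h}^r\hat{h}^s$ in the pair of normal indices $(r,s)$ and in the tangent pair $(j,k)$, and invoke the Ricci equation to recognize the normal curvature tensor $R^D(e_j,e_k;e_r,e_s)$.

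The main obstacle is the sign and index bookkeeping intrinsic to the pseudo-Riemannian setting. It is essential to treat the position vector as an extra normal frame element $e_m := \mathbf{x}$ with $\epsilon_m = -1$, to separate its contribution cleanly from that of the $\mathbb{H}^{m-1}_s$-normals $e_{n+1},\dots,e_{m-1}$, and to verify that the tangential parts of $\tilde{\nabla}_{e_j}e_i$ cancel against the correction $-(\nabla_{e_i}e_i)\tilde{\nu}$ so that only the intrinsically geometric pieces survive. Once this bookkeeping is in place, the four pieces of the stated formula assemble term by term.
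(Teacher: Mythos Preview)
The paper does not prove this lemma; it is quoted from Ye\v{g}in--Dursun \cite{YD} and stated without argument. Your outline is the standard computation one finds there: differentiate the first-derivative formula (\ref{eq4}) once more via the Leibniz rule on wedge products, expand each slot using $\tilde{\nabla}_{e_j}{\bf x}=e_j$, the Gauss formula, and the Weingarten formula, subtract the tangential correction, and then sort the surviving wedge shapes into the four displayed families, invoking Codazzi for the $D_{e_k}\hat H$ term and Ricci for the normal-curvature term. This is exactly the approach of \cite{YD}, so there is nothing to compare beyond noting that your sketch matches the expected proof.
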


In case of $n=m-2$, Ye\v{g}in and Dursun have the following fact. 
\begin{lemm}
\cite{YD} For an oriented pseudo-Riemannian hypersurface $M_t$ with index $t$ of $\mathbb{H}^{n+1}_{s-1}\subset \mathbb{E}^{n+2}_s$ we have
\begin{equation}
\Delta (e_{n+1}\wedge e_1\wedge e_2 \wedge \cdots \wedge e_n)=-n \hat{\mathcal{H}} \tilde{\nu}-ne_{n+1}\wedge e_1\wedge e_2\wedge \cdots \wedge e_n \label{eq3}
\end{equation}
where $\hat{\mathcal{H}}$ is the mean curvature $M_t$ in $\mathbb{H}^{n+1}_{s-1}$, that is, $\hat{H}=\epsilon_{n+1} \hat{\mathcal{H}} e_{n+1}$.
\end{lemm}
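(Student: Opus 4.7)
The plan is to compute $\Delta\Phi$ directly, where $\Phi:=e_{n+1}\wedge e_1\wedge\cdots\wedge e_n$, by applying the Leibniz rule twice to the wedge product using the ambient flat connection $\tilde\nabla$ of $\mathbb{E}^{n+2}_s$ and exploiting the massive cancellations forced by antisymmetry. The strategy mirrors the proof of the preceding Lemma (the formula for $\Delta\tilde\nu$), but is specialized to the hypersurface case $n=m-2$ and applied to the alternative $(n+1)$-vector in which $e_{n+1}$ plays the role that $\mathbf{x}$ plays in $\tilde\nu$.

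First I would record the Gauss--Weingarten formulas for the two-step inclusion $M_t\subset\mathbb{H}^{n+1}_{s-1}\subset\mathbb{E}^{n+2}_s$. For a tangent vector $e_i$, the ambient derivative $\tilde\nabla_{e_k}e_i$ splits into a tangential part $\nabla_{e_k}e_i\in\mathrm{span}(e_1,\ldots,e_n)$, an $e_{n+1}$-part with coefficient involving $\hat h^{n+1}_{ki}$, and an $\mathbf{x}$-part with coefficient $\pm\langle e_k,e_i\rangle$ (sign fixed by the convention for $\mathbb{H}^{n+1}_{s-1}\subset\mathbb{E}^{n+2}_s$). The derivative of the unit normal is purely tangential: $\tilde\nabla_{e_k}e_{n+1}=-\hat A_{n+1}(e_k)\in\mathrm{span}(e_1,\ldots,e_n)$, since $e_{n+1}$ is the unique normal inside $\mathbb{H}^{n+1}_{s-1}$ and $\langle e_k,e_{n+1}\rangle=0$. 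Applied to $\Phi$ via the Leibniz rule, every contribution lying in $\mathrm{span}(e_1,\ldots,e_n,e_{n+1})$ is killed by antisymmetry because those vectors already occupy slots in $\Phi$; the only survivor is the $\mathbf{x}$-piece of $\tilde\nabla_{e_k}e_i$, which by orthonormality $\langle e_k,e_i\rangle=\epsilon_k\delta_{ki}$ forces $i=k$ and yields
\[
e_k\Phi \;=\; \pm\,\epsilon_k\,e_{n+1}\wedge e_1\wedge\cdots\wedge e_{k-1}\wedge\mathbf{x}\wedge e_{k+1}\wedge\cdots\wedge e_n .
\]

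Differentiating once more, and using the position-vector identity $\tilde\nabla_{e_k}\mathbf{x}=e_k$, the $\mathbf{x}$-slot is replaced by $e_k$ and reproduces $\Phi$, contributing $\pm\Phi$ to $e_ke_k\Phi$ for each $k$. The derivative of the $e_{n+1}$ factor gives $-\hat A_{n+1}(e_k)$, whose components along $e_j$ with $j\neq k$ are killed (those slots are occupied); only the $e_k$-component survives, with coefficient proportional to $\hat h^{n+1}_{kk}$, and after a transposition that moves $\mathbf{x}$ to the leading slot this term assembles into a multiple of $\tilde\nu=\mathbf{x}\wedge e_1\wedge\cdots\wedge e_n$. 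Differentiating the remaining tangent factors $e_i$ with $i\neq k$ produces either already-present vectors (vanishing), or a second $\mathbf{x}$ factor (also vanishing), together with tangential connection-form terms that cancel exactly against $(\nabla_{e_k}e_k)\Phi$ by antisymmetry $\omega_{ij}+\omega_{ji}=0$. Assembling $\Delta\Phi=\sum_k\epsilon_k[(\nabla_{e_k}e_k)\Phi - e_ke_k\Phi]$ and using $\sum_k\epsilon_k\hat h^{n+1}_{kk}=\mathrm{tr}\,\hat A_{n+1}=n\hat{\mathcal{H}}$ (reconciled with $\hat H=\epsilon_{n+1}\hat{\mathcal{H}}e_{n+1}$), the $\Phi$-contributions aggregate to $-n\Phi$ and the $\tilde\nu$-contributions to $-n\hat{\mathcal{H}}\tilde\nu$, giving the claimed identity.

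The main obstacle is entirely combinatorial bookkeeping: tracking the signs introduced by the indefinite metric (the $\epsilon_i$'s attached to each contraction), the signs of the transpositions required to restore $\mathbf{x}$ and $e_{n+1}$ to their canonical slots in $\tilde\nu$ and $\Phi$, and the sign conventions relating $\hat A_{n+1}$, $\hat h^{n+1}_{ij}$, and $\hat{\mathcal{H}}$. Confirming that all these signs conspire to produce exactly the coefficients $-n\hat{\mathcal{H}}$ and $-n$, with no residual $\epsilon_{n+1}$ factor, is the delicate step; as a sanity check I would verify the formula on a totally umbilical hypersurface (where $\hat{\mathcal{H}}$ is constant and $\Delta\Phi$ can be read off directly from the definition) before declaring the sign analysis complete.
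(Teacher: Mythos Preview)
The paper does not supply its own proof of this lemma; it is quoted from \cite{YD} and used as a black box. So there is no ``paper's proof'' to compare against here.

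That said, your plan is the natural one and is essentially how such formulas are established in the cited literature: apply the Leibniz rule to $\Phi=e_{n+1}\wedge e_1\wedge\cdots\wedge e_n$ using the ambient flat connection, exploit the Gauss--Weingarten relations for the two-step inclusion $M_t\subset\mathbb{H}^{n+1}_{s-1}\subset\mathbb{E}^{n+2}_s$, and let antisymmetry kill all but the $\mathbf{x}$- and $\tilde\nu$-contributions. Your observation that $\tilde\nabla_{e_k}e_{n+1}=-\hat A_{n+1}(e_k)$ has no $\mathbf{x}$-component (because $\langle e_{n+1},\mathbf{x}\rangle=0$) is the key simplification over the general codimension formula~(\ref{eq1}), and it is correct.

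One point to tighten: in your second differentiation, the tangential connection-form terms coming from $\tilde\nabla_{e_k}e_i$ for $i\neq k$ do \emph{not} all vanish individually by antisymmetry of the wedge---the $e_k$-component of $\nabla_{e_k}e_i$ survives, since $e_k$ is absent from $e_k\Phi$. These terms cancel only after pairing with the $(\nabla_{e_k}e_k)\Phi$ piece of the Laplacian via $\omega_{ik}+\omega_{ki}=0$, exactly as you say; just be sure to track that cancellation explicitly rather than asserting it. With that bookkeeping done and the sanity check you propose on an umbilical slice, the argument goes through.
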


\begin{lemm}
{\rm If there exists  a polynomial $P(t)=(t-\lambda_1)(t-\lambda_2)$ with mutually distinct roots $\lambda_1, \lambda_2 \in \mathbb{C}$ 
such that 
$P(\Delta)\tilde{\nu}=0$, 
then $\tilde{\nu}$ is of at most $2$-type or infinite type. 
}\label{lemm2}
\end{lemm}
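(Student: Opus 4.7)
The plan is to mimic the argument used in the proof of Lemma \ref{lemm1}. Suppose $\tilde{\nu}$ is not of infinite type; then by definition it admits a finite spectral decomposition
\[
\tilde{\nu} = \tilde{\nu}_1 + \tilde{\nu}_2 + \cdots + \tilde{\nu}_k,
\]
where each $\tilde{\nu}_i$ is non-constant, $\Delta \tilde{\nu}_i = \mu_i \tilde{\nu}_i$, and the constants $\mu_1,\ldots,\mu_k$ are pairwise distinct. Applying $P(\Delta) = (\Delta - \lambda_1)(\Delta - \lambda_2)$ to this decomposition and using linearity yields
\[
0 = P(\Delta)\tilde{\nu} = \sum_{i=1}^k P(\mu_i)\,\tilde{\nu}_i.
\]

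The central step is to extract from this vector-valued identity the individual relations $P(\mu_i)\,\tilde{\nu}_i = 0$. For this I would use the standard Vandermonde trick: successively apply $\Delta, \Delta^2, \ldots, \Delta^{k-1}$ to the above identity, obtaining the system
\[
\sum_{i=1}^k \mu_i^{\,j}\, P(\mu_i)\,\tilde{\nu}_i = 0 \qquad (j = 0, 1, \ldots, k-1).
\]
The coefficient matrix $(\mu_i^{\,j})$ is a Vandermonde matrix in the pairwise distinct scalars $\mu_1,\ldots,\mu_k$, hence invertible. Inverting it (componentwise in $\mathbb{E}^N_q$) isolates each term and gives $P(\mu_i)\,\tilde{\nu}_i = 0$ for every $i$. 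Since each $\tilde{\nu}_i$ is non-constant, and therefore not identically the zero map, we conclude $P(\mu_i) = 0$, so $\mu_i \in \{\lambda_1, \lambda_2\}$. The mutual distinctness of the $\mu_i$ together with $\lambda_1 \neq \lambda_2$ then forces $k \leq 2$, which is exactly the assertion that $\tilde{\nu}$ is of at most $2$-type.

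The only potential obstacle is the justification that a vanishing sum of non-constant eigenmaps with distinct eigenvalues must vanish term by term; but this is precisely the Vandermonde argument above, and it is the same linear-independence mechanism already used implicitly in Lemma \ref{lemm1} (which handles the special case $P(t) = t^2$, $\lambda_1 = \lambda_2 = 0$, where a single application of $\Delta$ suffices). No new analytic input beyond the spectral definition of finite type is required.
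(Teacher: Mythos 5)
Your proof is correct and follows essentially the same route as the paper: assume a finite spectral decomposition, apply $P(\Delta)$ to it, and conclude that each eigenvalue must be a root of $P$, forcing $k\leq 2$. The only difference is that you make explicit (via the Vandermonde argument) the step from $\sum_i P(\mu_i)\tilde{\nu}_i=0$ to $P(\mu_i)\tilde{\nu}_i=0$ for each $i$, which the paper leaves implicit; this is a welcome clarification rather than a deviation.
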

\begin{proof}
Assume that $\tilde{\nu}$ is of finite type and it has the following spectral decomposition; 
$\tilde{\nu}=\tilde{\nu}_{\hat{\lambda}_1}+\cdots +\tilde{\nu}_{\hat{\lambda}_k}$ 
with $\Delta \tilde{\nu}_{\hat{\lambda}_i}=\hat{\lambda}_i\tilde{\nu}_{\hat{\lambda}_i}$ $(1\leq i\leq k)$. 
Then we have 
\begin{eqnarray}
0=P(\Delta)\tilde{\nu}&=&P(\Delta)(\tilde{\nu}_{\hat{\lambda}_1}+\cdots +\tilde{\nu}_{\hat{\lambda}_k})\nonumber \\
&=& \sum_{i=1}^{k}(\hat{\lambda}_i-\hat{\lambda}_1)(\hat{\lambda}_i-\hat{\lambda}_2)\tilde{\nu}_{\hat{\lambda}_i}. \nonumber 
\end{eqnarray}
Thus, we have $(\hat{\lambda}_i-\lambda_1)(\hat{\lambda}_i-\lambda_2)=0$ $(1\leq i\leq k)$. 
Hence, we have $\hat{\lambda}_i=\lambda_1$ or $\lambda_2$ for all $i$, that is, $\tilde{\nu}$ is of at most $2$-type. 
Hence the statement of this lemma follows.
\end{proof}

\section{Proof of (\rnum{1}) and (\rnum{2}) of Theorem \ref{thmA}.}

In this section, we prove (\rnum{1}) and (\rnum{2}) of Theorem \ref{thmA}. 

\vspace{0.5truecm}

\noindent
{\it Proof of (\rnum{1}) and (\rnum{2}) of Theorem \ref{thmA}.} 
Let $M$ be the complex circle of radius $\kappa$ in $\mathbb{H}^3_1$. 
This surface $M$ is parameterized as
$${\bf x}(z)=\sqrt{\kappa} (\cos z,\sin z) \quad (z\in \mathbb{C}),$$
where $\sqrt{\kappa}$ is one (with smaller argument) of squared roots of $\kappa$.  
Note that $M$ is included by $\mathbb{H}^3_1$ because of $\mathrm{Re}(\kappa)=-1$.  
For the convenience, we put $\sqrt{\kappa}=c+\sqrt{-1}d$.  Let $z=x+\sqrt{-1}y$.  
By simple calculations, we have 
$$\langle {\bf x}_x , {\bf x}_x \rangle =-1 ,\quad \langle {\bf x}_x , {\bf x}_y \rangle =-2cd ,\quad 
\langle {\bf x}_y , {\bf x}_y \rangle =1.$$
Also, we can show that the unit normal vector field $N$ of $M$ in $\mathbb{H}^3_1$ 
is given by $N=(d+\sqrt{-1}c)(\cos z,\sin z)$.  
With respect to the frame field $({\bf x}_x,{\bf x}_y)$, the shape operator $A_N$ in the direction $N$ is expressed 
as 
$$A_N=\left(
\begin{array}{cc}
\alpha & -\beta \\
\beta & \alpha
\end{array}
\right),
$$
where $\alpha=-\frac{2cd}{c^2+d^2}$ and $\beta=\frac{1}{c^2+d^2}$. 
Put $e_1:={\bf x}_x$, $\tilde{e}_2={\bf x}_y-\langle {\bf x}_y,e_1 \rangle e_1$ and $e_2=\frac{1}{|\tilde{e}_2|}\tilde{e}_2$. 
Then $(e_1,e_2)$ forms an orthonormal tangent frame field  on $M$.  
Note that $\langle e_1, e_1\rangle =-1$, $\langle e_2, e_2\rangle =1$ and $\vert \tilde{e}_2\vert^2=1+4c^2d^2$. 
With respect to $(e_1,e_2)$, the shape operator $A_N$ is expressed as 
$$A_N=
\left(
\begin{array}{cc}
0 & |\tilde{e}_2|\beta \\
-|\tilde{e}_2|\beta & 2\alpha
\end{array}
\right).
$$
Thus, by remarking $\alpha=-2cd\beta$ and $c^2-d^2=-1$, we obtain $\hat{\mathcal{H}}=\alpha$ and 
$\Vert \hat{h}\Vert^2=2(\alpha^2-\beta^2)$, 
where $\hat{\mathcal{H}}$ and $\hat{h}$ denote the mean curvature and the second fundamental form of $M$ in 
$\mathbb{H}^3_1$. 
Hence, by (\ref{eq5}), $M$ is flat. 
By (\ref{eq1}) and (\ref{eq3}), we have 
\begin{eqnarray}
\Delta \tilde{\nu}&=&2(\alpha^2-\beta^2)\tilde{\nu}+2\alpha N\wedge e_1\wedge e_2, \label{eq11} \\
\Delta^2 \tilde{\nu}&=&4((\alpha^2-\beta^2)^2-\alpha^2)\tilde{\nu}+4\alpha(\alpha^2-\beta^2-1)N\wedge e_1\wedge e_2. \label{eq12} 
\end{eqnarray}
Hence,
\begin{eqnarray}
\Delta^2\tilde{\nu}-2(\alpha^2-\beta^2-1)\Delta \tilde{\nu}+4\beta^2\tilde{\nu}=0.
\end{eqnarray}
Therefore by Lemma \ref{lemm2}, $\tilde{\nu}$ is either of finite type with type number $k\leq 2$ or of infinite type. \\
If $\kappa=-1$, then we have $c=0$ and hence $\alpha=0$. 
Therefore, it follows from (\ref{eq11}) that $\Delta \tilde{\nu}=-2\beta^2 \tilde{\nu}$, that is, $\tilde{\nu}$ is of $1$-type. 
If $\kappa \neq -1$, then we have $\alpha \neq 0$. 
Hence it follows from (\ref{eq11}) that $\tilde{\nu}$ is of not $1$-type. 
Therefore, $\tilde{\nu}$ is of $2$-type or of infinite type. 
Suppose that it is of $2$-type, and that 
$\tilde{\nu}$ has decomposition 
$\tilde{\nu}=\tilde{\nu}_1+\tilde{\nu}_2$ 
($\Delta{\tilde{\nu}_1}=\lambda_1\tilde{\nu}_1$, $\Delta{\tilde{\nu}_2}=\lambda_2\tilde{\nu}_2$).  
where $\lambda_1 , \lambda_2 \in \mathbb{R}$ are mutually distinct.  
From (\ref{eq11}) and (\ref{eq12}), $\tilde{\nu}_1$ and $\tilde{\nu}_2$ can be expressed as 
\begin{eqnarray}
\tilde{\nu}_1&=&a\tilde{\nu}+bN\wedge e_1\wedge e_2, \label{eq13}\\
\tilde{\nu}_2&=&(1-a)\tilde{\nu}-bN\wedge e_1\wedge e_2 \label{eq14}
\end{eqnarray}
for some constants $a$ and $b$.  
By substituting (\ref{eq13}) and (\ref{eq14}) into (\ref{eq11}) and (\ref{eq12}), 
and comparing coefficient of $\tilde{\nu}$ and $N\wedge e_1\wedge e_2$, we have 
\begin{eqnarray}
\left\{
\begin{array}{l}
(\lambda_1-\lambda_2)a=2(\alpha^2-\beta^2)-\lambda_2 \\
(\lambda_1-\lambda_2)b=2\alpha \\
(\lambda_1^2-\lambda_2^2)a=4(\alpha^2-\beta^2)^2-4\alpha^2-\lambda_2^2 \\
(\lambda_1^2-\lambda_2^2)b=4\alpha(\alpha^2-\beta^2-1)\\
\end{array}
\right.
\label{eq16}
\end{eqnarray}
Therefore, we obtain 
\begin{equation}
\lambda_2^2-2(\alpha^2-\beta^2-1)\lambda_2+4\beta^2=0. \label{eq15}
\end{equation}
The discriminant of (\ref{eq15}) is
\begin{equation}
\begin{split}
(\alpha^2-\beta^2-1)^2-4\beta^2=&(\alpha^2-\beta^2)^2-1\\
=&-\frac{1}{(c^2+d^2)^4}{(16c^8+16c^6+32c^4+16c^2)}\\
<&0.
\end{split}
\end{equation}
Therefore, there is no $\lambda_2 \in \mathbb{R}$ satisfying (\ref{eq15}). 
Thus a contradiction arises. 
Therefore, $\tilde{\nu}$ is of infinite type.
\qed

\section{Proof of (\rnum{3}) and (\rnum{4}) of Theorem \ref{thmA}.}

In this section, we prove (\rnum{3}) and (\rnum{4}) of Theorem \ref{thmA}.

\vspace{0.5truecm}

\noindent
{\it Proof of (\rnum{3}) and (\rnum{4}) of Theorem \ref{thmA}.} 
Let $(A,B,C)$ be the Cartan frame field along a null curve $\gamma(s)$ in $\mathbb{H}^3_1$ given by Definition \ref{defi1}. 
Then, the immersion ${\bf x}(s,t)=\gamma(s)+tB(s)$ parametrizes the B-scroll over a null curve $\gamma$. 
We have 
$${\bf x}_s(s,t)=A(s)+t(k_2C(s)-\gamma(s)) \quad \mathrm{and} \quad {\bf x}_t(s,t)=B(s)$$
and hence
$$\langle {\bf x}_s,{\bf x}_s\rangle =t^2(k_2^2-1), \quad \langle {\bf x}_s,{\bf x}_t\rangle=-1 \quad \mathrm{and} \quad \langle {\bf x}_t,{\bf x}_t\rangle=0.$$
The unit normal vector field of ${\bf x}$ is given by $N(s,t)=k_2tB(s)+C(s)$. 
With respect to the frame $({\bf x_s},{\bf x_t})$, the shape operator $A_N$ in the direction $N$ is expressed as
$$A_N=
\left(
\begin{array}{cc}
-k_2 & 0 \\
-k_1(s) & -k_2
\end{array}
\right)
.$$
%
When $|k_2| > 1$ and $t\neq0$, an orthonormal frame field $(e_1, e_2)$ on $M$ is given by
$$e_1=\frac{{\bf x_s}}{|{\bf x_s}|} ,\quad e_2=e_1+|{\bf x_s}|{\bf x_t}.$$
By simple calculations, we have 
\begin{eqnarray}
A_N(e_1)&=&\frac{1}{|{\bf x_s}|}A_N({\bf x_s}) \nonumber \\
&=&\frac{1}{|{\bf x_s}|}(-k_2{\bf x_s}-k_1(s){\bf x_t}) \nonumber \\
&=&-\frac{1}{|{\bf x_s}|}\left( k_2|{\bf x_s}|e_1+k_1(s)\frac{1}{|{\bf x_s}|}(e_2-e_1)\right) \nonumber \\
&=& \left(-k_2+\frac{k_1(s)}{t^2|k_2^2-1|}\right)e_1- \frac{k_1(s)}{t^2|k_2^2-1|}e_2, \nonumber \\
A_N(e_2)&=&A_N(e_1)+\frac{\langle{\bf x_s},{\bf x_s}\rangle}{|{\bf x_s}|}A_N({\bf x_t})\nonumber \\
&=& \frac{k_1(s)}{t^2|k_2^2-1|}e_1+\left(-k_2- \frac{k_1(s)}{t^2|k_2^2-1|}\right)e_2. \nonumber
\end{eqnarray}
Thus, with respect to an orthonormal frame $(e_1,e_2)$, the shape operator $A_N$ is expressed as 
$$A_N=
\left(
\begin{array}{cc}
-k_2+\frac{k_1(s)}{t^2|k_2^2-1|} & \frac{k_1(s)}{t^2|k_2^2-1|} \\
-\frac{k_1(s)}{t^2|k_2^2-1|} & -k_2-\frac{k_1(s)}{t^2|k_2^2-1|}
\end{array}
\right)
$$
and hence we have $\hat{\mathcal{H}}=-k_2$, $\Vert \hat{h}\Vert ^2=2k_2^2$. 
When $|k_2|<1$ and $t \neq 0$, we put $e_1=\frac{{\bf x_s}}{|{\bf x_s}|}$ and $e_2=e_1-|{\bf x_s}|{\bf x_t}$. 
Similarly we have  $\hat{\mathcal{H}}=-k_2$, $\Vert \hat{h}\Vert ^2=2k_2^2$. 
Hence $M$ is non-flat B-scroll by (\ref{eq5}) when $k_2^2\neq 1$ and $t\neq 0$.
We put $e_3:=N$ and 
\begin{eqnarray}
\tilde{\nu}_1:=\frac{1}{k_2^2-1}(-\tilde{\nu}+k_2e_3\wedge e_1\wedge e_2),\\
\tilde{\nu}_2:=\frac{1}{k_2^2-1}(k_2^2\tilde{\nu}-k_2e_3\wedge e_1\wedge e_2).
\end{eqnarray}
It is clear that $\tilde{\nu}=\tilde{\nu}_1+\tilde{\nu}_2$. 
Using (\ref{eq1}) and (\ref{eq3}) we obtain that $\Delta \tilde{\nu}_1=0$ and $\Delta \tilde{\nu}_2=2(k_2^2-1)\tilde{\nu}_2.$
On the other hand, by using (\ref{eq4}) and (\ref{eq3}), we have 
$$e_1(\tilde{\nu})=\epsilon_1\left(-k_2+\frac{k_1(s)}{t^2(k_2^2-1)}\right){\bf x}\wedge e_3\wedge e_2-\epsilon_1\frac{k_1(s)}{t^2(k_2^2-1)}{\bf x}\wedge e_1\wedge e_3$$
$$e_1(e_3\wedge e_1\wedge e_2)=\epsilon_1e_3\wedge {\bf x}\wedge e_2$$
and hence 
$$e_1(\tilde{\nu}_1)=\frac{\epsilon_1 k_1(s)}{t^2(k_2^2-1)^2}({\bf x}\wedge e_3\wedge e_2-{\bf x}\wedge e_1\wedge e_3) \neq 0.$$
Therefore $\tilde{\nu}$ is of null $2$-type.

When $k_2^2=1$ or $t=0$, an orthonormal frame field $(e_1,e_2)$ on $M$ is given by 
$$e_1=\frac{1}{\sqrt{2}}({\bf x_s}+{\bf x_t}) ,\quad e_2=\frac{1}{\sqrt{2}}({\bf x_s}-{\bf x_t}).$$
By simple calculations, we have 
\begin{eqnarray}
A_N(e_1)&=& \left(-k_2-\frac{k_1(s)}{2}\right)e_1+\frac{k_1(s)}{2}e_2,\nonumber \\
A_N(e_2)&=&-\frac{k_1(s)}{2}e_1+\left(-k_2+\frac{k_1(s)}{2}\right)e_2.\nonumber
\end{eqnarray}
Thus, with respect to an orthonormal frame $(e_1,e_2)$ ,the shape operator $A_N$ is expressed as 
$$A_N=
\left(
\begin{array}{cc}
-k_2-\frac{k_1(s)}{2} & -\frac{k_1(s)}{2} \\
\frac{k_1(s)}{2} & -k_2+\frac{k_1(s)}{2}
\end{array}
\right)
$$
and hence we have $\hat{\mathcal{H}}=-k_2$ and $\Vert \hat{h}\Vert ^2=2k_2^2$. 
Hence $M$ is flat B-scroll by (\ref{eq5}).
Using (\ref{eq1}) and (\ref{eq3}), we obtain $\Delta \tilde{\nu}=2\tilde{\nu}-2k_2e_3\wedge e_1\wedge e_2\neq 0$ and $\Delta^2\tilde{\nu}=0$.
Therefore $\tilde{\nu}$ is infinite type by Lemma \ref{lemm1}.

Conversely, assume that the pseudo-hyperbolic Gauss map $\tilde{\nu}$ is of null $2$-type. Then, from (\ref{eq4}), (\ref{eq1}) and (\ref{eq3}), we obtain 
\begin{equation}
\Delta \tilde{\nu}=\Vert \hat{h}\Vert ^2\tilde{\nu}+2\hat{\mathcal{H}} e_3\wedge e_1\wedge e_2, \label{eq9} 
\end{equation}
\begin{equation}
\begin{split}
\Delta^2\tilde{\nu}=&(\Vert \hat{h}\Vert^2 -2)\Delta \tilde{\nu}+(\Delta(\Vert \hat{h}\Vert^2)-4\hat{\mathcal{H}}^2+2\Vert \hat{h}\Vert^2)\tilde{\nu}\\
&-2\sum_{j=1}^{2}\epsilon_je_j(\Vert \hat{h}\Vert^2)h^{3}_{j1}{\bf x}\wedge e_{3}\wedge e_2
-2\sum_{j=1}^{2}\epsilon_je_j(\Vert \hat{h}\Vert^2)h^{3}_{j2}{\bf x}\wedge e_{1}\wedge e_3. \label{eq6}
\end{split}
\end{equation}
Since $\tilde{\nu}$ is of null $2$-type, 
we can put $\tilde{\nu}=\tilde{\nu}_1+\tilde{\nu}_2$ with $\Delta \tilde{\nu}_1=0$ and $\Delta \tilde{\nu}_2=\lambda_2\tilde{\nu}_2$ $(\lambda_2\neq0)$, 
where $\tilde{\nu}_1$ is non-constant. 
Then we have $\Delta^2 \tilde{\nu}=\lambda_2\Delta \tilde{\nu}$. 
This together with (\ref{eq6}) implies, $e_j(\Vert \hat{h}\Vert^2)=0$ (i.e. $\Vert \hat{h}\Vert^2$ is constant), 
$\lambda_2=\Vert \hat{h}\Vert^2-2$.  Hence the Gaussian curvature is constant, 
that is, $\Delta (\Vert \hat{h}\Vert^2)=0$. 
This together with (\ref{eq6}) implies that $\Vert \hat{h}\Vert^2=2\hat{\mathcal{H}}$. 
Hence, from (\ref{eq5}), we have 
$$S=-2+4\hat{\mathcal{H}}^2-\Vert h\Vert^2=-2+\Vert \hat{h}\Vert^2=\lambda_2\neq 0.$$
From Fact \ref{KimKim}, (\rnum{1}) and (\rnum{2}) of Theorem \ref{thmA}, $M$ is an open part of a non-flat B-scroll. 
\qed

\section{Proof of Theorems \ref{thmD} and \ref{thmB}.}

In this section, we consider the pseudo-hyperbolic Gauss map of the parallel surface of a complex circle and a B-scroll. 

\begin{defi}
{\sl Let $\bar{M}$ be a pseudo-Riemannian manifold and $M$ be a pseudo-Riemannian hypersurface of $\bar{M}$  
with unit normal vector field $N$.
At least locally and for $u\in \mathbb{R}$ sufficiently close to $0$, for the map ${\bf x}^u:M\rightarrow \bar{M}$ 
defined by ${\bf x}^u(p):=\exp_{{\bf x}(p)}uN_p=\gamma_{N_p}(u)$ $(p\in M)$ is an immersion and 
$M_u:={\bf x}^u(M)$ is called {\it the parallel surface of $M$ at distance $u$}, 
where $\exp_{{\bf x}(p)}$ denotes the exponential map of $\bar{M}$ at ${\bf x}(p)$ and $\gamma_{N_p}$ denotes 
the geodesic in $\bar{M}$ with $\gamma'_{N_p}(0)=N_p$.}
\end{defi}

\begin{example} 
{\rm Let ${\bf x}:M\hookrightarrow \mathbb{S}^{n+1}_1 (\subset \mathbb{E}^{n+2}_1)$ be a Lorentzian hypersurface and $N$ be its unit normal vector field. 
Then ${\bf x}^u$ is given by ${\bf x}^u(p)=\cos u{\bf x}(p)+\sin uN_p$ $(p\in M)$. }\label{ex1}
\end{example}
\begin{example} 
{\rm Let ${\bf x}:M\hookrightarrow \mathbb{H}^{n+1}_1 (\subset \mathbb{E}^{n+2}_2)$ be a Lorentzian hypersurface and $N$ be its unit normal vector field. 
Then ${\bf x}^u$ is given by ${\bf x}^u(p)=\cosh u{\bf x}(p)+\sinh uN_p$ $(p\in M)$. }
\end{example}

\vspace{1truecm}

\centerline{
\small{
\unitlength 0.1in
\begin{picture}( 35.8000, 33.2000)(  8.4000,-34.9000)
\put(27.3000,-19.5000){\makebox(0,0)[rt]{O}}%
%
\special{pn 8}%
\special{pa 1020 1940}%
\special{pa 4420 1940}%
\special{fp}%
\special{sh 1}%
\special{pa 4420 1940}%
\special{pa 4354 1920}%
\special{pa 4368 1940}%
\special{pa 4354 1960}%
\special{pa 4420 1940}%
\special{fp}%
\special{pn 8}%
\special{pa 1128 220}%
\special{pa 1262 344}%
\special{pa 1384 458}%
\special{pa 1498 562}%
\special{pa 1606 656}%
\special{pa 1704 744}%
\special{pa 1796 822}%
\special{pa 1884 894}%
\special{pa 1964 960}%
\special{pa 2040 1018}%
\special{pa 2112 1072}%
\special{pa 2180 1120}%
\special{pa 2244 1162}%
\special{pa 2304 1198}%
\special{pa 2362 1232}%
\special{pa 2418 1260}%
\special{pa 2472 1282}%
\special{pa 2524 1302}%
\special{pa 2574 1318}%
\special{pa 2624 1330}%
\special{pa 2672 1336}%
\special{pa 2720 1340}%
\special{pa 2768 1340}%
\special{pa 2816 1336}%
\special{pa 2864 1328}%
\special{pa 2914 1316}%
\special{pa 2964 1300}%
\special{pa 3016 1280}%
\special{pa 3070 1256}%
\special{pa 3126 1228}%
\special{pa 3184 1194}%
\special{pa 3246 1156}%
\special{pa 3310 1112}%
\special{pa 3378 1064}%
\special{pa 3450 1010}%
\special{pa 3528 950}%
\special{pa 3610 884}%
\special{pa 3696 812}%
\special{pa 3790 732}%
\special{pa 3890 644}%
\special{pa 3998 548}%
\special{pa 4114 442}%
\special{pa 4238 328}%
\special{pa 4240 326}%
\special{pa 4240 324}%
\special{pa 4242 324}%
\special{pa 4244 322}%
\special{pa 4244 322}%
\special{pa 4246 320}%
\special{pa 4246 318}%
\special{pa 4248 318}%
\special{pa 4250 316}%
\special{pa 4250 316}%
\special{pa 4252 314}%
\special{pa 4254 312}%
\special{pa 4254 312}%
\special{pa 4256 310}%
\special{pa 4258 310}%
\special{pa 4258 308}%
\special{pa 4260 306}%
\special{pa 4262 306}%
\special{pa 4262 304}%
\special{pa 4264 304}%
\special{pa 4266 302}%
\special{pa 4266 300}%
\special{pa 4268 300}%
\special{pa 4270 298}%
\special{pa 4270 298}%
\special{pa 4272 296}%
\special{pa 4274 294}%
\special{pa 4274 294}%
\special{pa 4276 292}%
\special{pa 4278 290}%
\special{pa 4278 290}%
\special{pa 4280 288}%
\special{pa 4280 288}%
\special{pa 4282 286}%
\special{pa 4284 284}%
\special{pa 4284 284}%
\special{pa 4286 282}%
\special{pa 4288 282}%
\special{pa 4288 280}%
\special{pa 4290 278}%
\special{pa 4292 278}%
\special{pa 4292 276}%
\special{pa 4294 274}%
\special{pa 4296 274}%
\special{pa 4296 272}%
\special{pa 4298 272}%
\special{pa 4300 270}%
\special{pa 4300 268}%
\special{pa 4302 268}%
\special{pa 4304 266}%
\special{pa 4304 264}%
\special{pa 4306 264}%
\special{pa 4308 262}%
\special{pa 4308 262}%
\special{pa 4310 260}%
\special{pa 4312 258}%
\special{pa 4314 258}%
\special{pa 4314 256}%
\special{pa 4316 254}%
\special{pa 4318 254}%
\special{pa 4318 252}%
\special{pa 4320 252}%
\special{pa 4322 250}%
\special{pa 4322 248}%
\special{pa 4324 248}%
\special{pa 4326 246}%
\special{pa 4326 244}%
\special{pa 4328 244}%
\special{pa 4330 242}%
\special{pa 4330 240}%
\special{pa 4332 240}%
\special{pa 4334 238}%
\special{pa 4334 238}%
\special{pa 4336 236}%
\special{pa 4338 234}%
\special{pa 4338 234}%
\special{pa 4340 232}%
\special{pa 4342 230}%
\special{pa 4342 230}%
\special{pa 4344 228}%
\special{pa 4346 226}%
\special{pa 4346 226}%
\special{pa 4348 224}%
\special{pa 4350 224}%
\special{pa 4352 222}%
\special{pa 4352 220}%
\special{sp}%
\special{pn 8}%
\special{pa 1312 3490}%
\special{pa 1430 3380}%
\special{pa 1542 3280}%
\special{pa 1646 3188}%
\special{pa 1742 3106}%
\special{pa 1832 3030}%
\special{pa 1916 2960}%
\special{pa 1996 2898}%
\special{pa 2070 2840}%
\special{pa 2140 2790}%
\special{pa 2206 2744}%
\special{pa 2268 2704}%
\special{pa 2328 2668}%
\special{pa 2384 2638}%
\special{pa 2440 2612}%
\special{pa 2492 2590}%
\special{pa 2544 2572}%
\special{pa 2594 2558}%
\special{pa 2642 2548}%
\special{pa 2692 2542}%
\special{pa 2740 2540}%
\special{pa 2788 2542}%
\special{pa 2836 2548}%
\special{pa 2884 2558}%
\special{pa 2934 2572}%
\special{pa 2986 2588}%
\special{pa 3038 2610}%
\special{pa 3092 2636}%
\special{pa 3150 2666}%
\special{pa 3208 2702}%
\special{pa 3272 2742}%
\special{pa 3338 2786}%
\special{pa 3406 2838}%
\special{pa 3480 2894}%
\special{pa 3560 2956}%
\special{pa 3644 3024}%
\special{pa 3734 3100}%
\special{pa 3830 3184}%
\special{pa 3932 3274}%
\special{pa 4044 3374}%
\special{pa 4162 3484}%
\special{pa 4164 3484}%
\special{pa 4164 3486}%
\special{pa 4166 3486}%
\special{pa 4168 3488}%
\special{pa 4168 3490}%
\special{pa 4170 3490}%
\special{sp}%
\put(8.4000,-6.1000){\makebox(0,0)[lb]{${\LARGE \mathbb{H}^3_1}$}}%
%
\special{pn 8}%
\special{ar 2740 1940 160 600  1.5707963 1.6023753}%
\special{ar 2740 1940 160 600  1.6971121 1.7286911}%
\special{ar 2740 1940 160 600  1.8234279 1.8550069}%
\special{ar 2740 1940 160 600  1.9497437 1.9813226}%
\special{ar 2740 1940 160 600  2.0760595 2.1076384}%
\special{ar 2740 1940 160 600  2.2023753 2.2339542}%
\special{ar 2740 1940 160 600  2.3286911 2.3602700}%
\special{ar 2740 1940 160 600  2.4550069 2.4865858}%
\special{ar 2740 1940 160 600  2.5813226 2.6129016}%
\special{ar 2740 1940 160 600  2.7076384 2.7392174}%
\special{ar 2740 1940 160 600  2.8339542 2.8655332}%
\special{ar 2740 1940 160 600  2.9602700 2.9918490}%
\special{ar 2740 1940 160 600  3.0865858 3.1181647}%
\special{ar 2740 1940 160 600  3.2129016 3.2444805}%
\special{ar 2740 1940 160 600  3.3392174 3.3707963}%
\special{ar 2740 1940 160 600  3.4655332 3.4971121}%
\special{ar 2740 1940 160 600  3.5918490 3.6234279}%
\special{ar 2740 1940 160 600  3.7181647 3.7497437}%
\special{ar 2740 1940 160 600  3.8444805 3.8760595}%
\special{ar 2740 1940 160 600  3.9707963 4.0023753}%
\special{ar 2740 1940 160 600  4.0971121 4.1286911}%
\special{ar 2740 1940 160 600  4.2234279 4.2550069}%
\special{ar 2740 1940 160 600  4.3497437 4.3813226}%
\special{ar 2740 1940 160 600  4.4760595 4.5076384}%
\special{ar 2740 1940 160 600  4.6023753 4.6339542}%
%
\special{pn 13}%
\special{pa 2740 1940}%
\special{pa 2740 1350}%
\special{fp}%
\special{sh 1}%
\special{pa 2740 1350}%
\special{pa 2720 1418}%
\special{pa 2740 1404}%
\special{pa 2760 1418}%
\special{pa 2740 1350}%
\special{fp}%
%
\special{pn 8}%
\special{ar 2740 1940 160 600  4.7467397 6.2831853}%
\special{ar 2740 1940 160 600  0.0000000 1.5707963}%
%
\special{pn 13}%
\special{pa 2740 1340}%
\special{pa 3320 1330}%
\special{fp}%
\special{sh 1}%
\special{pa 3320 1330}%
\special{pa 3254 1312}%
\special{pa 3268 1332}%
\special{pa 3254 1352}%
\special{pa 3320 1330}%
\special{fp}%
\put(25.6000,-12.4000){\makebox(0,0)[lb]{${\bf x}(p)$}}%
\put(33.5000,-14.2000){\makebox(0,0)[lb]{$N(p)$}}%
%
\special{pn 8}%
\special{ar 3640 1940 170 1060  1.5707963 1.5903085}%
\special{ar 3640 1940 170 1060  1.6488451 1.6683573}%
\special{ar 3640 1940 170 1060  1.7268939 1.7464061}%
\special{ar 3640 1940 170 1060  1.8049427 1.8244549}%
\special{ar 3640 1940 170 1060  1.8829914 1.9025036}%
\special{ar 3640 1940 170 1060  1.9610402 1.9805524}%
\special{ar 3640 1940 170 1060  2.0390890 2.0586012}%
\special{ar 3640 1940 170 1060  2.1171378 2.1366500}%
\special{ar 3640 1940 170 1060  2.1951866 2.2146988}%
\special{ar 3640 1940 170 1060  2.2732354 2.2927475}%
\special{ar 3640 1940 170 1060  2.3512841 2.3707963}%
\special{ar 3640 1940 170 1060  2.4293329 2.4488451}%
\special{ar 3640 1940 170 1060  2.5073817 2.5268939}%
\special{ar 3640 1940 170 1060  2.5854305 2.6049427}%
\special{ar 3640 1940 170 1060  2.6634793 2.6829914}%
\special{ar 3640 1940 170 1060  2.7415280 2.7610402}%
\special{ar 3640 1940 170 1060  2.8195768 2.8390890}%
\special{ar 3640 1940 170 1060  2.8976256 2.9171378}%
\special{ar 3640 1940 170 1060  2.9756744 2.9951866}%
\special{ar 3640 1940 170 1060  3.0537232 3.0732354}%
\special{ar 3640 1940 170 1060  3.1317719 3.1512841}%
\special{ar 3640 1940 170 1060  3.2098207 3.2293329}%
\special{ar 3640 1940 170 1060  3.2878695 3.3073817}%
\special{ar 3640 1940 170 1060  3.3659183 3.3854305}%
\special{ar 3640 1940 170 1060  3.4439671 3.4634793}%
\special{ar 3640 1940 170 1060  3.5220158 3.5415280}%
\special{ar 3640 1940 170 1060  3.6000646 3.6195768}%
\special{ar 3640 1940 170 1060  3.6781134 3.6976256}%
\special{ar 3640 1940 170 1060  3.7561622 3.7756744}%
\special{ar 3640 1940 170 1060  3.8342110 3.8537232}%
\special{ar 3640 1940 170 1060  3.9122597 3.9317719}%
\special{ar 3640 1940 170 1060  3.9903085 4.0098207}%
\special{ar 3640 1940 170 1060  4.0683573 4.0878695}%
\special{ar 3640 1940 170 1060  4.1464061 4.1659183}%
\special{ar 3640 1940 170 1060  4.2244549 4.2439671}%
\special{ar 3640 1940 170 1060  4.3025036 4.3220158}%
\special{ar 3640 1940 170 1060  4.3805524 4.4000646}%
\special{ar 3640 1940 170 1060  4.4586012 4.4781134}%
\special{ar 3640 1940 170 1060  4.5366500 4.5561622}%
\special{ar 3640 1940 170 1060  4.6146988 4.6342110}%
\special{ar 3640 1940 170 1060  4.6927475 4.7122597}%
%
\special{pn 8}%
\special{ar 3640 1930 170 1060  4.7123890 6.2831853}%
\special{ar 3640 1930 170 1060  0.0000000 1.5707963}%
\put(25.4000,-28.2000){\makebox(0,0)[lb]{${\bf x}(p')$}}%
\put(34.9000,-7.3000){\makebox(0,0)[lb]{$\gamma_{N_p}(u)$}}%
\put(31.9000,-31.8000){\makebox(0,0)[lb]{$\gamma_{N_{p'}}(u)$}}%
\put(21.6000,-22.8000){\makebox(0,0)[lb]{${\bf x}(M)$}}%
\put(41.6000,-23.5000){\makebox(0,0)[lb]{${\bf x}^u(M)$}}%
\put(42.4000,-36.0000){\makebox(0,0)[lb]{$\gamma_{N_{p'}}$}}%
\put(44.1000,-3.4000){\makebox(0,0)[lb]{$\gamma_{N_{p}}$}}%
%
\special{pn 13}%
\special{pa 2760 1940}%
\special{pa 3340 1930}%
\special{fp}%
\special{sh 1}%
\special{pa 3340 1930}%
\special{pa 3274 1912}%
\special{pa 3288 1932}%
\special{pa 3274 1952}%
\special{pa 3340 1930}%
\special{fp}%
%
\special{pn 8}%
\special{pa 3340 1330}%
\special{pa 3340 1940}%
\special{dt 0.045}%
%
\special{pn 20}%
\special{sh 1}%
\special{ar 2730 1340 10 10 0  6.28318530717959E+0000}%
\special{sh 1}%
\special{ar 2730 1330 10 10 0  6.28318530717959E+0000}%
\special{sh 1}%
\special{ar 2730 1330 10 10 0  6.28318530717959E+0000}%
%
\special{pn 20}%
\special{sh 1}%
\special{ar 2720 2550 10 10 0  6.28318530717959E+0000}%
\special{sh 1}%
\special{ar 2720 2550 10 10 0  6.28318530717959E+0000}%
%
\special{pn 20}%
\special{sh 1}%
\special{ar 3620 2990 10 10 0  6.28318530717959E+0000}%
\special{sh 1}%
\special{ar 3620 3010 10 10 0  6.28318530717959E+0000}%
%
\special{pn 20}%
\special{sh 1}%
\special{ar 3640 850 10 10 0  6.28318530717959E+0000}%
\special{sh 1}%
\special{ar 3640 850 10 10 0  6.28318530717959E+0000}%
%
\special{pn 8}%
\special{pa 2620 2370}%
\special{pa 2588 2378}%
\special{pa 2556 2382}%
\special{pa 2526 2380}%
\special{pa 2496 2370}%
\special{pa 2468 2356}%
\special{pa 2442 2336}%
\special{pa 2416 2314}%
\special{pa 2400 2300}%
\special{sp}%
%
\special{pn 8}%
\special{pa 3780 2450}%
\special{pa 3814 2454}%
\special{pa 3846 2456}%
\special{pa 3878 2458}%
\special{pa 3910 2456}%
\special{pa 3940 2450}%
\special{pa 3970 2442}%
\special{pa 3998 2428}%
\special{pa 4026 2412}%
\special{pa 4052 2394}%
\special{pa 4078 2374}%
\special{pa 4104 2352}%
\special{pa 4130 2330}%
\special{pa 4130 2330}%
\special{sp}%
\end{picture}%
\hspace{0.5truecm}}} 

\vspace{1truecm}

\centerline{Figure 3 : parallel surface}

\vspace{1truecm}


\vspace{0.5truecm}

\noindent
{\it Proof of Theorem \ref{thmD}.}\,
Let $M$ be a complex circle in $\mathbb{H}^3_1$, $\kappa$ be the radius of $M$. 
Let $\sqrt{\kappa}=c+\sqrt{-1}d$ $(c,d\in \mathbb{R})$. 
Since $M\subset \mathbb{H}^3_1$, we have $\mathrm{Re}(\kappa)=c^2-d^2=-1$.
We remember that a unit normal vector field $N$ of $M$ is given by $N(z)=(d+\sqrt{-1}c)(\cos z,\sin z)$.  
Hence the parallel surface $M^u$ of $M$ at distance $u$ is parameterized by 
\begin{eqnarray}
{\bf x}^u(z)&=& \cosh u\cdot (c+\sqrt{-1}d)(\cos z,\sin z)+\sinh u\cdot (d+\sqrt{-1}c)(\cos z, \sin z) \nonumber \\
&=& \kappa^u(\cos z, \sin z), \nonumber
\end{eqnarray}
where $\kappa^u$ is the complex number satisfying 
$\sqrt{\kappa^u}:=(c\cosh u+d\sinh u)+\sqrt{-1}(d\cosh u+c\sinh u)$.  
Thus $M^u$ is the complex circle of radius $\kappa^u$. 
It is easy to show that, when $u$ moves over $(-\infty,\infty)$, $\kappa^u$ moves over the whole of 
$\{ z\in \mathbb{C} \, |\, \mathrm{Re}(z)= -1 \}$. 
There the statement of Theorem \ref{thmD} follows from (\rnum{1}) and (\rnum{2}) of Theorem \ref{thmA}.
\qed

Next we prove Theorem C.

\vspace{0.5truecm}

\noindent
{\it Proof of Theorem C.}\, 
We consider a B-scroll $\displaystyle M(:\mathop{\Leftrightarrow_{\rm def}\,\, {\bf x}(s,t)=\gamma(s)+tB(s))}$ 
in $\mathbb{H}^3_1$. 
Since $N(s,t)=k_2tB(s)+C(s)$ where $k_2$ is the second curvature of $\gamma$, the parallel surface $M^u$ of $M$ is 
parameterized as 
$${\bf x}^u(s,t)=r(u)tB(s)+\sinh uC(s)+\cosh u\gamma(s).$$
We put $r(u):=\cosh u+k_2\sinh u$. 
By simple calculations, we have 
\begin{eqnarray}
\frac{\partial {\bf x}^u}{\partial s}&=&r(u)A(s)+k_1(s)\sinh uB(s)+r(u)t(-\gamma(s)+k_2C(s)), \nonumber \\
\frac{\partial {\bf x}^u}{\partial t}&=&r(u)B(s)\nonumber
\end{eqnarray}
and hence
\begin{eqnarray}
\begin{split}
\left \langle \frac{\partial {\bf x}^u}{\partial s},\frac{\partial {\bf x}^u}{\partial s}\right \rangle &=r(u)(-2k_2\sinh u+r(u)t^2(k_2^2-1)), \\ 
\left \langle \frac{\partial {\bf x}^u}{\partial s},\frac{\partial {\bf x}^u}{\partial t}\right \rangle &=-r(u)^2, \\
\left \langle \frac{\partial {\bf x}^u}{\partial t},\frac{\partial {\bf x}^u}{\partial t}\right \rangle &=0. 
\end{split}
\label{eq20}
\end{eqnarray}
If $r(u)= 0$, then ${\bf x}^u$ is not immersion. 
Hence, we need to assume that
\begin{eqnarray}
\begin{array}{ccc}
\mathrm{arctanh} \frac{-1}{k_2}<u<\infty & \mathrm{if}& k_2>1, \\
-\infty <u<\mathrm{arctanh} \frac{-1}{k_2} & \mathrm{if} & k_2<-1, \\
-\infty <u<\infty & \mathrm{if} & |k_2|\leq 1.
\end{array} \label{eq17}
\end{eqnarray}
The unit normal vector field $N^u$ of $M^u$ is given by 
$N^u(s,t)=\frac{k_2+r(u)\sinh u}{\cosh u}tB(s)+\cosh uC(s)+\sinh u\gamma(s)$.
Hence the shape operator $A_{N^u}$ in the direction $N^u$ is expressed with respect to 
the usual frame $(\frac{\partial {\bf x}^u}{\partial s},\frac{\partial {\bf x}^u}{\partial t})$ as
$$A_{N^u}=
\left(
\begin{array}{cc}
-\alpha & 0 \\
-\frac{k_1(s)}{r(u)^2} & -\alpha
\end{array}
\right)
,$$
where $\alpha=\frac{k_2+r(u)\sinh u}{r(u)\cosh u}$. 
When $\frac{\partial {\bf x}^u}{\partial s}$ is non-null, we put
$$e_1:=\frac{1}{\left|\frac{\partial {\bf x}^u}{\partial s}\right|}\frac{\partial {\bf x}^u}{\partial s}, 
\quad \tilde{e}_2:=\frac{1}{\left|\frac{\partial {\bf x}^u}{\partial s}\right|}\left(-r(u)^2 \frac{\partial {\bf x}^u}{\partial s}-
\left \langle \frac{\partial {\bf x}^u}{\partial s},\frac{\partial {\bf x}^u}{\partial s} \right \rangle \frac{\partial {\bf x}^u}{\partial t}\right).$$
We have $|\tilde{e}_2|=r(u)^2$. 
We put $e_2:=\frac{1}{r(u)^2}\tilde{e}_2$. 
Then, with respect to an orthonormal frame $(e_1,e_2)$, the shape operator $A_{N^u}$ is expressed as
$$A_{N^u}=
\left(
\begin{array}{cc}
-\alpha+\beta & -\beta \\
\beta & -\alpha-\beta
\end{array}
\right)
,$$
where $\beta=\frac{k_1(s)}{r(u)(-2k_2\sinh u+r(u)t^2(k_2^2-1))}$.
Thus, the mean curvature $\hat{\mathcal{H}}_u=-\alpha$ and $\Vert \hat{h_u}\Vert^2=2\alpha^2$.

When $\frac{\partial {\bf x}^u}{\partial s}$ is null, an orthonormal frame field $(e_1,e_2)$ on $M_u$ is given by 
$$e_1=\frac{1}{\sqrt{2}r(u)}\left(\frac{\partial {\bf x}^u}{\partial s}+\frac{\partial {\bf x}^u}{\partial t}\right), \quad
e_2=\frac{1}{\sqrt{2}r(u)}\left(\frac{\partial {\bf x}^u}{\partial s}-\frac{\partial {\bf x}^u}{\partial t}\right).$$
With respect to $(e_1, e_2)$, the shape operator $A_{N_u}$ is expressed as 
$$A_{N^u}=
\left(
\begin{array}{cc}
-\alpha-\beta^\prime & -\beta^\prime \\
\beta^\prime & -\alpha+\beta^\prime
\end{array}
\right)
,$$
where $\beta^\prime=\frac{k_1(s)}{2r(u)^2}$. 
Thus, the mean curvature $\hat{\mathcal{H}}_u=-\alpha$ and $\Vert \hat{h_u}\Vert^2=2\alpha^2$.
In both cases, it follows that $M_u$ has constant Gaussian curvature. 
Hence, by Fact \ref{KimKim}, $M_u$ is a B-scroll or a complex circle. 
By Theorem \ref{thmD}, $M_u$ is a B-scroll. 
By (\ref{eq5}), $M_u$ is flat if $\alpha^2=1$ and $M_u$ is non-flat if $\alpha^2\neq1$. \\
Assume that $M_u$ is flat for some $u\neq0$. 
Then we have $\alpha^2=1$ and hence $r(u)(-\sinh u\pm \cosh u)=k_2$. 
From this relation, we have $k_2=\pm 1$. 
Hence we obtain the second-half of statement of this theorem. 
\qed

\begin{remark}
We put 
\begin{eqnarray}
u_+:=
\left\{
%
\hspace{0.5truecm}}} 

\vspace{1truecm}

\centerline{Figure 4 : focal submanifold of B-scroll}

\vspace{1truecm}


\section{Cartan frame field of null curve of general order} 
E. Cartan showed that the existence of the Frenet type frame along a null curve of $\mathbb{R}^3_1$, called Cartan frame.                              
K. L. Duggal and A. Bejancu (\cite{DB}) constructed a Frenet type frame and equations along a null curve of a Lorentzian manifold. 
D. H. Jin (\cite{J}) constructed a new type Frenet frame and equations simpler than Duggal and Bejancu's Frenet frame and equations, by using it. 
Also, he proved that if we takes a special parameter (called distinguished parameter) then it becomes simpler moreover.
Daggal and Bejancu-type Frenet frame is called the general Frenet frame and Jin-type one is called the natural Frenet frame. 
Frenet equations also are named similary. 
In particular, Ferrandez-Gimenez-Lucas (\cite{FGL}) call Natural Frenet of null curves $\gamma$ parameterized by pseudo-arc parameter
(that is $\langle \nabla_{\dot{\gamma}}\dot{\gamma}, \nabla_{\dot{\gamma}}\dot{\gamma}\rangle =1$) the Cartan frame field. 
Note that the Cartan frame in this paper differ from the meaning of \cite{FGL}. 
\begin{lemm}
{\rm Let $\gamma(s)$ be a null curve of order $d (\geq 3)$ in $n$-dimensional Lorentzian manifold $(M,\langle , \rangle,\nabla)$, 
there is a frame field $(A,B,C,Z_1,Z_2,\cdots ,Z_{d-3})$ 
along $\gamma(s)$ satisfying 
\begin{eqnarray}
\begin{array}{l}
\langle A,A\rangle = \langle B,B\rangle=0,\quad \langle A,B\rangle=-1,\\ 
\langle A,C\rangle = \langle B,C\rangle=0,\quad \langle C,C\rangle=1,\\ 
\langle A,Z_i\rangle = \langle B,Z_i\rangle=\langle C,Z_i\rangle=0,\\ 
\langle Z_i,Z_j\rangle = \delta_{ij} 
\end{array}
\label{jouken2}
\end{eqnarray}
{\rm and}
\begin{eqnarray}
\left\{
\begin{array}{l}
\dot{\gamma}=A,\\ 
\nabla_{\dot{\gamma}} A=k_1C\\ 
\nabla_{\dot{\gamma}} C=k_2A+k_1B\\ 
\nabla_{\dot{\gamma}} B=k_2C+k_3Z_1\\ 
\nabla_{\dot{\gamma}} Z_1=k_3A+k_4Z_2\\ 
\nabla_{\dot{\gamma}} Z_2=-k_4Z_1+k_5Z_3\\ 
\hspace{50pt} \vdots \\ 
\nabla_{\dot{\gamma}} Z_{d-4}=-k_{d-2}Z_{d-5}+k_{d-1}Z_{d-3} \\ 
\nabla_{\dot{\gamma}} Z_{d-3}=-k_{d-1}Z_{d-4}. 
\end{array}
\right.
\label{jouken1}
\end{eqnarray}
This frame field $(A,B,C,Z_1,\cdots ,Z_{d-3})$ is called the {\it Cartan frame field along} $\gamma(s)$, 
where $k_i$ $(i=1,\cdots ,d-1)$ are positive-valued functions.
}
\label{defi2}
\end{lemm}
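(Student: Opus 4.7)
The plan is an inductive Gram--Schmidt-type construction along $\gamma$, adapted to the null character of $\dot\gamma$: the orthogonality and normalization identities in (\ref{jouken2}) are differentiated to force the coefficients in (\ref{jouken1}), and the order-$d$ hypothesis guarantees that a new positive curvature and a new frame vector emerge at each of the first $d-3$ recursion steps, with the process terminating exactly at step $d-3$.

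First, set $A := \dot\gamma$; since $\langle A,A\rangle = 0$, one has $\nabla_{\dot\gamma}A \in A^\perp$, and the assumption $d \geq 3$ prevents $\nabla_{\dot\gamma}A$ from being proportional to $A$, so it is spacelike in the Lorentzian ambient. Put $k_1 := |\nabla_{\dot\gamma}A| > 0$ and $C := \nabla_{\dot\gamma}A/k_1$. To construct $B$, differentiate $\langle A, C\rangle = 0$ to obtain $\langle A, \nabla_{\dot\gamma}C\rangle = -k_1$; then the ansatz $B := \frac{1}{k_1}(\nabla_{\dot\gamma}C - k_2 A)$ automatically satisfies $\langle A,B\rangle = -1$ and $\langle C,B\rangle = 0$ for every scalar $k_2$, and the requirement $\langle B,B\rangle = 0$ uniquely forces $k_2 := -\langle \nabla_{\dot\gamma}C, \nabla_{\dot\gamma}C\rangle/(2k_1)$, so that $\nabla_{\dot\gamma}C = k_2 A + k_1 B$ holds by construction.

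For the inductive step, assume $(A,B,C,Z_1,\ldots,Z_j)$ has been built satisfying (\ref{jouken2}) and all Frenet equations up to the one producing $Z_j$ (with $j=0$ meaning only $(A,B,C)$). Differentiating each orthogonality or normalization identity involving the next derivative $\nabla_{\dot\gamma}B$ (when $j=0$) or $\nabla_{\dot\gamma}Z_j$ (when $j \geq 1$), and substituting the Frenet equations already established, completely determines its components along every already-constructed frame vector: all vanish except a single predicted term --- $k_2 C$ for $\nabla_{\dot\gamma}B$, $k_3 A$ for $\nabla_{\dot\gamma}Z_1$, and $-k_{j+2}Z_{j-1}$ for $\nabla_{\dot\gamma}Z_j$ with $j \geq 2$. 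The remainder therefore lies in the orthogonal complement of the current frame's span, which is positive definite because that span is Lorentzian (it contains the null pair $(A,B)$); hence the remainder is spacelike or zero, and by the order-$d$ hypothesis it is nonzero precisely when the frame length is less than $d$. In that case its norm and normalization define the next positive curvature and the next frame vector; at the terminal step $j = d-3$ the remainder vanishes, closing the system with $\nabla_{\dot\gamma}Z_{d-3} = -k_{d-1}Z_{d-4}$.

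The main obstacle is to align the abstract order-$d$ hypothesis with the step-by-step recursion, namely to show that after $(A,B,C,Z_1,\ldots,Z_j)$ has been constructed, the span of the current frame coincides with the osculating subspace $\mathrm{span}\{\dot\gamma, \nabla_{\dot\gamma}\dot\gamma, \ldots, \nabla_{\dot\gamma}^{j+2}\dot\gamma\}$. This is handled by a parallel induction: at each stage the newly defined frame vector is, modulo the previously spanned subspace, a nonzero scalar multiple of the next iterated covariant derivative of $\dot\gamma$, so nonvanishing of the remainder for $j+3 < d$ and its vanishing for $j+3 = d$ translate exactly into the definition of the order.
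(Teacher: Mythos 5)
Your proof is correct and follows essentially the same route as the paper's: an inductive construction in which the orthogonality relations of (\ref{jouken2}) are differentiated to pin down the components of each successive covariant derivative along the frame already built, with the positive definiteness of the orthocomplement of $\mathrm{Span}\{A,B,C,Z_1,\dots,Z_j\}$ and the order-$d$ hypothesis producing the next unit spacelike vector or terminating the recursion. The only cosmetic differences are your explicit formula $B=\frac{1}{k_1}(\nabla_{\dot{\gamma}}C-k_2A)$ with $k_2$ fixed by $\langle B,B\rangle=0$, where the paper instead characterizes $B$ abstractly as the unique null vector in the Lorentzian plane $\mathrm{Span}\{A,\nabla_{\dot{\gamma}}C\}$ satisfying $\langle A,B\rangle=-1$, and your somewhat more explicit matching of the frame span with the osculating spaces of $\gamma$ at each stage.
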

Natural Frenet equations of null curves parameterized by distinguished parameter and (\ref{jouken1}) equations are the same form by remarking $\langle A,B\rangle =-1$. 
The frame field in Lemma \ref{defi2} is different from the Natural Frenet frame field in constructive method and parameters condition.  
We construct in the order of $A,C,B,Z_1,\cdots, Z_{n-3}$, while they construct in the order of $A,B,C,Z_1,\cdots,Z_{n-3}$. 
Also, we constructed the frame field directly while they constructed the natural Frenet frame field by way of a general Frenet frame fields.  
In \cite{FGL}, the Cartan frame in the sense of their is constructed directly. 
Their constructive method and that of Lemma \ref{defi2} are the same except for 
a null curve parameterized by pseudo-arc or not. 
In this paper, a parameter of $\gamma$ is  arbitrary, that is, $\langle \nabla_{\dot{\gamma}}\dot{\gamma}, \nabla_{\dot{\gamma}}\dot{\gamma} \rangle$ is non-constant.
\begin{proof}
We constitue it like the Frenet formula.
First we show that
$\nabla_{\dot{\gamma}} A$ is non-null. Suppose that $\nabla_{\dot{\gamma}}A$ is null. 
Since $\gamma$ is of order $d (\geq 3)$, $A$ and $\nabla_{\dot{\gamma}}A$ are linearly independent. 
Denote by $W_1$ the $2$-dimensional subspace spanned by $A$ and $\nabla_{\dot{\gamma}}A$. Also we have
$$\langle A,\nabla_{\dot{\gamma}} A\rangle=\frac{1}{2}\dot{\gamma}\langle A,A\rangle=0.$$
Let $(e_1=A, e_2=\nabla_{\dot{\gamma}}A,e_3,\cdots ,e_{n})$ be a frame field along $\gamma(s)$. Then we have
$$(\langle e_i, e_j\rangle)=
\left(
\begin{array}{cc}
\begin{matrix}
0 & 0 \\
0 & 0
\end{matrix} & \text{\Large{\:*}} \\
 \text{\rule{0pt}{17pt}\Large{*}}  &  \text{\Large{*}}
\end{array}
\right).
$$
This contradicts the fact that $\langle , \rangle$ is Lorentzian. Therefore $\nabla_{\dot{\gamma}}A$ is non-null. \\
We put $k_1(s)=\sqrt{|\langle \nabla_{\dot{\gamma}}A,\nabla_{\dot{\gamma}}A\rangle|}$ and $C(s)=\frac{1}{k_1(s)}\nabla_{\dot{\gamma}}A$, 
and $\epsilon_C:=\langle C,C\rangle$. 
We have
\begin{eqnarray}
\nabla_{\dot{\gamma}}C=\left( \frac{1}{k_1}\right)^{{\huge \cdot}}
 \nabla_{\dot{\gamma}}A+ 
\frac{1}{k_1}\nabla_{\dot{\gamma}} (\nabla_{\dot{\gamma}}\dot{\gamma}) \quad 
\in \mathrm{Span}\{ \dot{\gamma}, \nabla_{\dot{\gamma}}\dot{\gamma}, \nabla_{\dot{\gamma}}(\nabla_{\dot{\gamma}}\dot{\gamma})\}
\label{nabraC}
\end{eqnarray}
Also we have $\langle \nabla_{\dot{\gamma}}C,C\rangle=0$ and $\langle A,C\rangle=0$, 
Hence
\begin{equation}
\langle \nabla_{\dot{\gamma}}C,A\rangle=-\langle C,\nabla_{\dot{\gamma}}A\rangle= -k_1\langle C,C\rangle \quad 
(\neq 0). \label{eq10}
\end{equation}  
Let $(\hat{e}_1=C, \hat{e}_2=A , \hat{e}_3, \cdots , \hat{e}_n)$ be a frame field along $\gamma$ 
satisfying $(\hat{e}_2, \cdots ,\hat{e}_n)$ is a frame field of $\mathrm{Span}\{ C\}^\bot$. Then we have \\
$$\left(\langle \hat{e}_i, \hat{e}_j \rangle \right)=
\left(
\begin{array}{ll}
\begin{matrix}
\epsilon_C & 0\\
0 & 0
\end{matrix} &
\begin{matrix}
0 & \cdots & 0\\
 & & \\
\end{matrix}\\
\hspace{3pt}
\begin{matrix}
0 & \\
\vdots & \\
0 & 
\end{matrix} 
& \hspace{15pt}\hsymb{*}
\end{array}
\right).
$$
Therefore we have $\epsilon_C=1$ and $\langle \nabla_{\dot{\gamma}}C,A\rangle = -k_1$ because $\langle , \rangle$ is Lorentzian. Hence we have 
Since $\gamma$ is of order $d (\geq 3)$, 
we have $\mathrm{dim}(\mathrm{Span}\{ A,\nabla_{\dot{\gamma}}C\})=2$. 
From these facts, it follows that $\mathrm{Span}\{ A, \nabla_{\dot{\gamma}}C\}$ is a $2$-dimensional Lorentzian space. 
There is a unique null vector $B \in \mathrm{Span}\{A,\nabla_{\dot{\gamma}}C\}$ such that $\langle A,B\rangle=-1$. 
It can be expressed as $\nabla_{\dot{\gamma}}C=aA+bB$ for some functions $a$ and $b$ because $\nabla_{\dot{\gamma}}C \in \mathrm{Span}\{A,B\}$. 
We have $b=k_1$ 
by (\ref{eq10}). We put $k_2:=a$. 
Put $W_2=\mathrm{Span}\{A,B,C\}^\bot$, 
which is $(n-3)$-dimensional Euclidean space. 
We put $$\nabla_{\dot{\gamma}}B=\hat{a}A+\hat{b}B+cC+Z\quad (Z\in W_2).$$ 
We have
\begin{eqnarray}
-\hat{a}&=& \langle \nabla_{\dot{\gamma}}B,B\rangle=0, \nonumber \\
-\hat{b}&=& \langle \nabla_{\dot{\gamma}}B,A\rangle=\langle B,\nabla_{\dot{\gamma}}A\rangle=-k_1\langle B,C\rangle=0,\nonumber \\
\hat{c}&=& \langle \nabla_{\dot{\gamma}}B,C\rangle=-\langle B,\nabla{\dot{\gamma}}C\rangle=k_2.\nonumber
\end{eqnarray}
If $d=3$, then we have $Z=0$. 
In the sequel, we consider the case of $d\geq 4$. \\
We put $k_3=|Z|$ and $Z_1=\frac{Z}{|Z|}$. 
Then $\nabla_{\dot{\gamma}}B$ can be expressed as $\nabla_{\dot{\gamma}}B=k_2C+k_3Z_1$. 
We put 
$$\nabla_{\dot{\gamma}}Z_1=\check{a}A+\check{b}B+\check{c}C+\check{z}_1Z_1+\hat{Z},$$
where $\hat{Z} \in \mathrm{Span}\{A, B, C, Z_1\}^\bot$. 
Then we have 
\begin{eqnarray}
-\check{b}&=& \langle \nabla_{\dot{\gamma}}Z_1,A\rangle=-\langle Z_1,k_1C\rangle=0, \nonumber \\
-\check{a}&=& \langle \nabla_{\dot{\gamma}}Z_1,B\rangle=-\langle Z_1, k_2C+k_3Z_1\rangle=-k_3, \nonumber \\
\check{c}&=& \langle \nabla_{\dot{\gamma}}Z_1, C\rangle=\langle Z_1,k_2A+k_1B\rangle=0, \nonumber \\
\check{z}_1&=& \langle \nabla_{\dot{\gamma}}Z_1,Z_1\rangle=0. \nonumber
\end{eqnarray} 
If $d=4$, then we have $\hat{Z}=0$. 
In the sequel, we consider the case of $d\geq 5$. Then we have $\hat{Z}\neq 0$.
We put $k_4=|\hat{Z}|$ and $Z_2=\frac{\hat{Z}}{|\hat{Z}|}$.
Then $\nabla_{\dot{\gamma}}Z_1$ can expressed as
$\nabla_{\dot{\gamma}}Z_1=k_3A+k_4Z_2$. 
We put 
$$\nabla_{\dot{\gamma}}Z_2=\tilde{a}A+\tilde{b}B+\tilde{c}C+\tilde{z}_1Z_1+\tilde{z}_2Z_2+\tilde{Z},$$
where $\tilde{Z} \in \mathrm{Span}\{ A, B, C, Z_1,Z_2\}^\bot$. 
Then we have 
$\tilde{a}=\tilde{b}=\tilde{c}=\tilde{z}_2=0$ and $\tilde{z}_1=-k_4$. 
If $d=5$, then we have $\tilde{Z}=0$. 
In the sequel, we consider the case of $d\geq 6$. 
Then we have $\tilde{Z}\neq0$. 
We put $k_5=|\tilde{Z}|$ and $Z_3=\frac{\tilde{Z}}{|\tilde{Z}|}$. 
Then $\nabla_{\dot{\gamma}}Z_2$ is expressed as $\nabla_{\dot{\gamma}}Z_2=-k_4Z_1+k_5Z_3.$
By repeating the same discussion, we can derive the relations in Lemma \ref{defi2}. 
\end{proof}

It has already been proven their uniquely exist of a null curve of $\mathbb{R}^n_1$ 
equipped with a general Frenet frame field for differentiable for any given functions $k_i :  [-\epsilon ,\epsilon ] \rightarrow \mathbb{R}$. 
For a null curve of $\mathbb{S}^n_1$ (or $\mathbb{H}^n_1$) and a Cartan frame field in the sense of \cite{FGL}, respectively. 
See \cite{DB}, \cite{DJ}, \cite{FGL} etc about these facts. 
We prove the above about the Cartan frame field in Lemma \ref{defi2} in another method.
\begin{prop}
{\rm
Let $k_1, \cdots , k_n : [-\epsilon,\epsilon]\rightarrow \mathbb{R}$ be differentiable non-zero functions. 
There is a unique null curve $\gamma$ of order $n$ of $\mathbb{R}^n_1$ equipped with a frame field $(A, B, C, Z_1, \cdots, Z_{n-3})$ along $\gamma$ 
satisfying (\ref{jouken2}) and (\ref{jouken1}).
}\label{lemmCFNC}
\end{prop}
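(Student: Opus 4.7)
The plan is to read the relations (\ref{jouken1}) as a linear system of ordinary differential equations for an unknown moving frame, invoke the standard existence-and-uniqueness theorem for linear ODEs, and then verify that the orthonormality conditions (\ref{jouken2}) are automatically preserved by the flow. The curve $\gamma$ is finally recovered by a single quadrature of $A$.

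First I would fix a point $p_0\in\mathbb{R}^n_1$ and a frame $(A_0,B_0,C_0,Z_1^0,\ldots,Z_{n-3}^0)$ of $\mathbb{R}^n_1$ at $p_0$ satisfying (\ref{jouken2}) as initial data at $s=0$. Since the ambient connection $\tilde\nabla$ on $\mathbb{R}^n_1$ is the flat one, equations (\ref{jouken1}) become a system of the form $F'(s)=F(s)K(s)$, where $F(s)$ is the $n\times n$ matrix whose columns are the components of $(A,B,C,Z_1,\ldots,Z_{n-3})$ in the standard basis and $K(s)$ is the $n\times n$ coefficient matrix built from the given functions $k_1,\ldots,k_{n-1}$. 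Because $K$ is continuous (indeed differentiable) on $[-\epsilon,\epsilon]$, the standard theorem on linear ODEs yields a unique solution $F(s)$ on $[-\epsilon,\epsilon]$ with $F(0)$ equal to the prescribed initial frame.

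The main step, and the one I expect to be the real obstacle, is to show that this solution frame satisfies (\ref{jouken2}) for all $s$, not just at $s=0$. For each of the $\binom{n+1}{2}$ scalar products appearing in (\ref{jouken2}) I would differentiate with respect to $s$, substitute (\ref{jouken1}), and check that the result is expressible as a linear combination of the same scalar products. For example, $\frac{d}{ds}\langle A,A\rangle=2k_1\langle A,C\rangle$, $\frac{d}{ds}\langle A,B\rangle=k_1\langle C,B\rangle+k_2\langle A,C\rangle+k_3\langle A,Z_1\rangle$, and so on. Collecting all these identities gives a closed homogeneous linear ODE system satisfied by the vector of scalar products $g(s):=(\langle A,A\rangle,\langle A,B\rangle,\ldots,\langle Z_{n-4},Z_{n-3}\rangle)(s)$. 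Since the constant configuration prescribed by (\ref{jouken2}) is a particular solution of this linear system (as can be checked directly: the right-hand sides vanish identically when (\ref{jouken2}) holds), and since $g(0)$ equals this constant by choice of the initial frame, uniqueness for the auxiliary linear ODE forces $g(s)$ to equal it for all $s\in[-\epsilon,\epsilon]$. This is essentially the standard ``orthonormality is preserved by Frenet-type equations'' argument, but the bookkeeping is the non-trivial part because of the null pairings $\langle A,A\rangle=\langle B,B\rangle=0$, $\langle A,B\rangle=-1$.

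Once (\ref{jouken2}) is known to persist, I would define $\gamma(s):=p_0+\int_0^s A(\tau)\,d\tau$, so that $\dot\gamma=A$ and $\gamma(0)=p_0$. The curve $\gamma$ is then null by $\langle A,A\rangle=0$, and the chosen frame is a Cartan frame along it. The order of $\gamma$ is exactly $n$ because the hypothesis that all $k_i$ are non-zero ensures that the successive derivatives $A,\tilde\nabla_{\dot\gamma}A,\tilde\nabla_{\dot\gamma}^2A,\ldots$ span successively the subspaces $\mathrm{Span}\{A\}$, $\mathrm{Span}\{A,C\}$, $\mathrm{Span}\{A,B,C\}$, $\mathrm{Span}\{A,B,C,Z_1\}$, etc., reaching an $n$-dimensional span after $n-1$ derivatives. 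Uniqueness of the pair $(\gamma,(A,B,C,Z_1,\ldots,Z_{n-3}))$ up to the initial data is immediate from uniqueness for the linear ODE $F'=FK$ and the quadrature that produces $\gamma$; the construction in the proof of Lemma \ref{defi2} shows that any Cartan frame with the prescribed curvatures must satisfy the same ODE, so two such curves with the same initial configuration coincide.
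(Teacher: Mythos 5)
Your proposal is correct in substance and follows the same skeleton as the paper (linear ODE for the frame, preservation of the metric relations, quadrature to recover $\gamma$), but the key step is handled by a different device. The paper first passes to $V=\tfrac{1}{\sqrt2}(A+B)$, $W=\tfrac{1}{\sqrt2}(A-B)$ so that the relations (\ref{jouken2}) become orthonormality with respect to $E=\mathrm{diag}(-1,1,\dots,1)$; it then observes that $E\tilde K$ is skew-symmetric, deduces ${}^t\tilde F=E\tilde F^{-1}E$, and concludes that the solution stays in the pseudo-orthogonal group. You instead work directly in the null frame and run the Gram-matrix argument: the vector of pairings $g(s)$ satisfies a closed homogeneous linear system, the constant configuration of (\ref{jouken2}) is an equilibrium of that system (your spot checks such as $\tfrac{d}{ds}\langle A,C\rangle=k_1\langle C,C\rangle+k_2\langle A,A\rangle+k_1\langle A,B\rangle=k_1-k_1=0$ are the right ones), and ODE uniqueness forces $g\equiv g(0)$. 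Both arguments are standard and equivalent in content; the paper's buys a one-line verification (skew-symmetry checked once) at the cost of a basis change, while yours avoids the basis change at the cost of the bookkeeping over all $\binom{n+1}{2}$ pairings. Your additional remark on why the order is exactly $n$ (the nonvanishing of the $k_i$ forces the iterated covariant derivatives of $A$ to sweep out $\mathrm{Span}\{A\}\subset\mathrm{Span}\{A,C\}\subset\mathrm{Span}\{A,B,C\}\subset\cdots$) is a point the paper leaves implicit, and is welcome.

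The one place where you fall short of what the paper actually proves is uniqueness. You establish uniqueness of the solution for a \emph{fixed} initial frame and initial point, which is immediate from ODE theory; the paper's uniqueness statement is up to congruence: given two curves $\gamma_1,\gamma_2$ with the same curvature functions, it sets $L:=\tilde F_2(s_0)\tilde F_1(s_0)^{-1}$, checks that $L$ is pseudo-orthogonal (because both initial frames satisfy (\ref{jouken2})), uses equivariance of the ODE under left multiplication to get $\tilde F_2=L\tilde F_1$, and integrates to obtain $\gamma_2=L\gamma_1+b$. Your phrase ``up to the initial data'' gestures at this but does not supply the congruence argument; without it the word ``unique'' in the statement is not fully justified. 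This is a short addition, not a flaw in the method.
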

\begin{proof}
We put $V:=\frac{1}{\sqrt{2}}(A+B)$ and $W:=\frac{1}{\sqrt{2}}(A-B)$. 
Let $\tilde{F}$ be a matrix consisting column vector fields $V, W, C, Z_1, \cdots, Z_{n-3}$, that is 
$\tilde{F}=(V \quad W \quad C \quad Z_1 \quad \cdots \quad Z_{n-3})$. 
We put 
\begin{eqnarray}
\tilde{K}=
\left(
\begin{array}{lll}
\begin{matrix}
0 &0 & \frac{1}{\sqrt{2}}(k_1+k_2) & \frac{1}{\sqrt{2}}k_3 &0 \\ 
0 &0 & -\frac{1}{\sqrt{2}}(k_1-k_2) & \frac{1}{\sqrt{2}}k_3 &0 \\
\frac{1}{\sqrt{2}}(k_1+k_2) & \frac{1}{\sqrt{2}}(k_1-k_2) &0 &0 &0 \\
\frac{1}{\sqrt{2}}k_3 & -\frac{1}{\sqrt{2}}k_3 &0 &0 &-k_4 \\
0 &0 &0 &k_4 &0 & 
\end{matrix}
&
&
\\
&
\ddots
&
\\
& &
\begin{matrix}
0 & -k_{n-1} &0 \\
k_{n-1} & 0 & -k_n \\
0& k_n & 0
\end{matrix}
\end{array}
\right) , \nonumber
\end{eqnarray}
then (\ref{jouken1}) holds
\begin{eqnarray}
\dot{\tilde{F}}=\tilde{F}\tilde{K}. \label{ODE2}
\end{eqnarray} 
If you give the initial value, the solution is unique. We prove that the solution satisfying (\ref{jouken2}). 
We put $E:=\mathrm{diag}(-1,1,\cdots,1)$.
Let $\Phi$ be (\ref{ODE2}) solution with $E$ as the initial value. 
It is easy to check that for any initial value $\tilde{F}_0$ the solution expressed as $\tilde{F}=\tilde{F}_0  E\Phi$
and $E\tilde{K}$ is skew-symmetric.
Hence
\begin{eqnarray}
\frac{d}{ds}(E\tilde{F}^{-1}E)(s)&=&-E\tilde{K}(s)\tilde{F}^{-1}(s)E \nonumber \\
                              &=& ^t(E\tilde{K}(s))\tilde{F}^{-1}(s)E \nonumber \\
                              &=& ^t\tilde{K}(s)(E\tilde{F}^{-1}E)(s). \nonumber
\end{eqnarray} 
By
\begin{eqnarray}
^t\Phi(s_0) &=& ^tE =E \nonumber \\
(E\Phi^{-1} E)(s_0) &=& E, \nonumber
\end{eqnarray}
we obtain $^t\tilde{F}=E\tilde{F}^{-1}E$. 
Hence, the columns of $\tilde{F}$ form an orthonormal basis for $\mathbb{R}^n_1$ and $V$ is timelike.
Thus $F$ satisfies (\ref{jouken2}). 
When we put $\gamma(s)=\int^s_{s_0} A(t)dt$, $\gamma$ is null curve satisfying (\ref{jouken2}) and (\ref{jouken1}).\\
Lastly we prove the uniqueness of $\gamma$.
Let $\gamma_1$ and $\gamma_2$ be null curves that have same Cartan curvatures $k_1,\cdots,k_n$.
Let $\tilde{F}_i$ be an pseudo-orthogonal matrix defined $\tilde{F}_i=(V^i \quad W^i \quad C^i \quad Z^i_1 \quad \cdots \quad Z^i_{n-3})$ for $\gamma_i(s)$ ($i=1,2$).
We put $\tilde{F}_1^0:=\tilde{F}_1(s_0)$ and $\tilde{F}_2^0=\tilde{F}_2(s_0)$.
When we put $L:=\tilde{F}_2^0(\tilde{F}_1^0)^{-1}$, relationship between $\tilde{F}_1$ and $\tilde{F}_2$ is $\tilde{F}_1(s)=L\tilde{F}_2(s)$. 
In particular, $A^2(s)=LA^2(s)$ because $V^2(s)=LV^1(s)$ and $W^2(s)=LW^1(s)$.
We put $b:=\gamma_2(s_0)-L\gamma_1(s_0)$, we have
\begin{eqnarray}
\gamma_2(s)&=& \gamma_2(s_0)+\int_{s_0}^s A^2(t)dt \nonumber \\
           &=& L\gamma_1(s_0)+b+\int_{s_0}^s LA^1(t)dt \nonumber \\
           &=& L\gamma_1(s)+b. \nonumber 
\end{eqnarray}           
Since $L$ is a pseudo-orthogonal matrix, it is linear isometry.                         
\end{proof}
\begin{remark}
{\rm In the case of $d=3$, we have 
\begin{eqnarray}
\left\{
\begin{array}{l}
\dot{\gamma}=A \nonumber \\
\nabla_{\dot{\gamma}}A=k_1C \nonumber \\
\nabla_{\dot{\gamma}}C=k_2A+k_1B \nonumber \\
\nabla_{\dot{\gamma}}B=k_2C. \nonumber 
\end{array}
\right.
\end{eqnarray}
In the case of $d=4$, we have
\begin{eqnarray}
\left\{
\begin{array}{l}
\dot{\gamma}=A \nonumber \\
\nabla_{\dot{\gamma}}A=k_1C \nonumber \\
\nabla_{\dot{\gamma}}C=k_2A+k_1B \nonumber \\
\nabla_{\dot{\gamma}}B=k_2C+k_3Z_1 \nonumber \\
\nabla_{\dot{\gamma}}Z_1=k_3A. \nonumber 
\end{array}
\right.
\end{eqnarray}
}
\end{remark}

\begin{lemm}
{\rm 
Let $\gamma$ be a null curve $\gamma(s)$ of order $d  (\geq3)$ in 
$\mathbb{H}^{n}_{1}$ (resp. $\mathbb{S}^n_1$) and  
 $(A,B,C,Z_1,\cdots ,Z_{n-3})$ the Cartan frame field along $\gamma(s)$. 
Then, with respect to the connection $\tilde{\nabla}$ of $\mathbb{E}^{n+1}_2$ (resp. $\mathbb{E}^{n+1}_1$), 
 the following relations hold: }
\begin{eqnarray}
\left\{
\begin{array}{l}
\tilde{\nabla}_{\dot{\gamma}} A=k_1C \\
\tilde{\nabla}_{\dot{\gamma}} C=k_2A+k_1B \\
\tilde{\nabla}_{\dot{\gamma}} B=k_2C+k_3Z_1+\epsilon \gamma  \\
\tilde{\nabla}_{\dot{\gamma}} Z_1=k_3A+k_4Z_2 \\
\tilde{\nabla}_{\dot{\gamma}} Z_i=-k_{i+2}Z_{i-1}+k_{i+3}Z_{i+1} \quad (2\leq i\leq n-4)\\
\tilde{\nabla}_{\dot{\gamma}} Z_{n-3}=-k_{n-1}Z_{n-4}
\end{array}
\right.
\label{jouken3}
\end{eqnarray}
{\rm where $\epsilon=-1$ (resp. $\epsilon=+1$)}.
\label{lemm7.2}
\end{lemm}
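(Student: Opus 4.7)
The plan is to deduce the ambient Frenet-type equations by a single application of the Gauss formula that relates the ambient Levi-Civita connection $\tilde{\nabla}$ of $\mathbb{E}^{n+1}_2$ (resp.\ $\mathbb{E}^{n+1}_1$) to the intrinsic Levi-Civita connection $\nabla$ of $\mathbb{H}^n_1$ (resp.\ $\mathbb{S}^n_1$). Concretely, using the position vector $\mathbf{x}$ as the unit normal field to $\mathbb{H}^n_1$ (resp.\ $\mathbb{S}^n_1$), the second fundamental form of this umbilical embedding is
\[
h^{\mathrm{amb}}(X,Y)=-\epsilon\langle X,Y\rangle\,\mathbf{x},
\]
which is exactly what one reads off from equation (2.10) with $\hat{h}=0$ and $c=1$. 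Hence along the curve $\gamma$ we have the fundamental identity
\[
\tilde{\nabla}_{\dot{\gamma}}X=\nabla_{\dot{\gamma}}X-\epsilon\,\langle \dot{\gamma},X\rangle\,\gamma
\]
for any vector field $X$ tangent to $\mathbb{H}^n_1$ (resp.\ $\mathbb{S}^n_1$) along $\gamma$.

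The next step is to plug each of the frame vectors $X\in\{A,C,B,Z_1,\dots,Z_{n-3}\}$ into this identity, starting from the intrinsic relations (\ref{jouken1}) and using the orthogonality relations (\ref{jouken2}). Since $\dot{\gamma}=A$, the correction term $-\epsilon\langle A,X\rangle\gamma$ vanishes identically whenever $\langle A,X\rangle=0$. By (\ref{jouken2}) this happens for $X=A$ (null), $X=C$, and for every $X=Z_i$, leaving $X=B$ as the only case in which the correction contributes, namely $\langle A,B\rangle=-1$ so that
\[
\tilde{\nabla}_{\dot{\gamma}}B=\nabla_{\dot{\gamma}}B-\epsilon\,(-1)\,\gamma=k_2C+k_3Z_1+\epsilon\gamma.
\]
All other equations in (\ref{jouken3}) then coincide with the intrinsic ones in (\ref{jouken1}), and one reads off the statement of the lemma verbatim.

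There is essentially no obstacle here; the proof is a mechanical bookkeeping exercise once the umbilicity of $\mathbb{H}^n_1$ (resp.\ $\mathbb{S}^n_1$) in the ambient pseudo-Euclidean space has been recorded. The only small subtlety to be careful about is the sign convention: one should verify that the normalization of $\mathbf{x}$ as a (timelike, resp.\ spacelike) unit normal together with the sign convention $\epsilon=+1$ for $\mathbb{S}^n_1$ and $\epsilon=-1$ for $\mathbb{H}^n_1$ reproduces exactly the coefficient $+\epsilon$ in front of $\gamma$ in (\ref{jouken3}); this is exactly the sign already fixed in (2.10) of Section~2, so the argument is consistent with the paper's earlier conventions. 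The proof can therefore be written in just a few lines, presenting the Gauss-type identity and then evaluating it separately on $A$, $C$, $B$, and each $Z_i$.
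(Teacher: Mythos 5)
Your proof is correct and follows essentially the same route as the paper: both invoke the total umbilicity of $\mathbb{H}^n_1$ (resp. $\mathbb{S}^n_1$) in the ambient pseudo-Euclidean space, write the Gauss-formula correction term as $\langle \dot{\gamma},X\rangle\hat{H}_\gamma$ with $\hat{H}_\gamma=-\epsilon\gamma$, and observe that it vanishes for every frame vector except $B$, where $\langle A,B\rangle=-1$ yields the extra term $\epsilon\gamma$. The sign bookkeeping matches the paper's convention in (2.9)--(2.10), so nothing further is needed.
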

\begin{proof}
Let $\hat{H}_{\bf x}$ be the mean curvature vector of $\mathbb{H}^n_1$ in $\mathbb{E}^{n+1}_{2}$ 
(resp. $\mathbb{S}^n_1$ in $\mathbb{E}^{n+1}_1$).  
Then we have $\hat{H}_\gamma=-\epsilon \gamma$.  
Since $\mathbb{H}^n_1$ (resp. $\mathbb{S}^n_1$) is totally umbilic in $\mathbb{E}^{n+1}_2$ 
(resp. $\mathbb{E}^{n+1}_1$), we have 
\begin{eqnarray}
\left\{
\begin{array}{l}
\tilde{\nabla}_{\dot{\gamma}} A=k_1C+\langle A,A\rangle \hat{H}_\gamma=k_1C\nonumber \\
\tilde{\nabla}_{\dot{\gamma}} C=k_2A+k_1B+\langle A,C\rangle \hat{H}_\gamma=k_2A+k_1B \nonumber \\
\tilde{\nabla}_{\dot{\gamma}} B=k_2C+k_3Z_1+\langle A,B\rangle \hat{H}_\gamma=k_2C+k_3Z_1+\epsilon \gamma \nonumber\\
\tilde{\nabla}_{\dot{\gamma}} Z_1=k_3A+k_4Z_2+\langle A,Z_1\rangle \hat{H}_\gamma=k_3A+k_4Z_2\nonumber \\
\tilde{\nabla}_{\dot{\gamma}} Z_i=-k_{i+2}Z_{i-1}+k_{i+3}Z_{i+1}+\langle A,Z_i\rangle \hat{H}_\gamma=-k_{i+2}Z_{i-1}+k_{i+3}Z_{i+1}\nonumber \\
\tilde{\nabla}_{\dot{\gamma}} Z_{n-3}=-k_{n-1}Z_{n-4}+\langle A,Z_{n-3}\rangle \hat{H}\gamma=-k_{n-1}Z_{n-4}. \nonumber
\end{array}
\right.
\end{eqnarray}
\end{proof}
\begin{prop}
{\rm
Let $k_1, \cdots , k_n : [-\epsilon,\epsilon] \rightarrow \mathbb{R}$ be differentiable non-zero functions. 
There is a unique null curve $\gamma$ of order $n$ of $\mathbb{H}^n_1$ (or $\mathbb{S}^n_1$) 
equipped with a frame field $(A, B, C, Z_1, \cdots, Z_{n-3})$ along $\gamma$ 
satisfying $\dot{\gamma}=A$, (\ref{jouken2}), (\ref{jouken3}) and
\begin{eqnarray}
\begin{array}{l}
\langle \gamma, \gamma \rangle =\epsilon \\
\langle A, \gamma \rangle =\langle B,\gamma \rangle =\langle C, \gamma \rangle =\langle Z_i,\gamma \rangle =0
\end{array}
\label{jouken4}
\end{eqnarray}
where $\epsilon=-1$ (resp. $\epsilon=+1$).
}
\end{prop}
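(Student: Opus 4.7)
The plan is to mimic the proof of the preceding Proposition for $\mathbb{R}^n_1$, but now to incorporate $\gamma$ itself as an additional column of the frame matrix. The key observation is that the equations (\ref{jouken3}) together with $\dot\gamma=A$ form a closed first-order linear ODE system in the flat ambient space $\mathbb{E}^{n+1}_2$ (resp. $\mathbb{E}^{n+1}_1$), because the new term $\epsilon\gamma$ appearing in $\tilde\nabla_{\dot\gamma}B$ involves the vector $\gamma$ that is already being tracked. Setting $V:=\tfrac{1}{\sqrt 2}(A+B)$ and $W:=\tfrac{1}{\sqrt 2}(A-B)$ as before, I would form the $(n+1)\times(n+1)$ matrix
\[
\tilde F=(V\;\;W\;\;C\;\;Z_1\;\;\cdots\;\;Z_{n-3}\;\;\gamma),
\]
whose columns should constitute a pseudo-orthonormal basis of the ambient space of signature $E:=\mathrm{diag}(-1,1,\ldots,1,\epsilon)$ (of index $2$ when $\epsilon=-1$, of index $1$ when $\epsilon=+1$).

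First, I would rewrite (\ref{jouken3}) and $\dot\gamma=A$ as a single linear ODE $\dot{\tilde F}=\tilde F\tilde K$ for an explicit $(n+1)\times(n+1)$ coefficient matrix $\tilde K$. Compared with the matrix appearing in the previous Proposition, the only new entries of $\tilde K$ lie in the last row and last column: $\dot\gamma=A=\tfrac{1}{\sqrt 2}(V+W)$ contributes entries $1/\sqrt 2$ to the $\gamma$-column, while the $\epsilon\gamma$-term in $\tilde\nabla_{\dot\gamma}B$ produces entries $\pm\epsilon/\sqrt 2$ in the $V$- and $W$-columns opposite $\gamma$. Second, I would prescribe any initial pseudo-orthonormal frame $\tilde F(s_0)$ (which amounts to choosing $\gamma(s_0)\in\mathbb{H}^n_1$ (resp.\ $\mathbb{S}^n_1$) together with a compatible Cartan frame at $s_0$) and invoke existence and uniqueness for linear ODEs to obtain a unique solution $\tilde F$ on $[-\varepsilon,\varepsilon]$. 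Third, I would verify that $E\tilde K$ is skew-symmetric; once this is established, the same differentiation trick as in the preceding Proposition, namely $\tfrac{d}{ds}(E\tilde F^{-1}E)={}^t\tilde K\,(E\tilde F^{-1}E)$, yields ${}^t\tilde F=E\tilde F^{-1}E$ for all $s$, so $\tilde F(s)$ remains pseudo-orthonormal. Reading off the last column then produces a curve $\gamma(s)$ with $\langle\gamma,\gamma\rangle=\epsilon$, hence $\gamma\subset\mathbb{H}^n_1$ (resp.\ $\mathbb{S}^n_1$); the orthogonality relations (\ref{jouken2}) and (\ref{jouken4}) are immediate from pseudo-orthonormality, $\dot\gamma=A$ is built into the system, and the remaining columns provide the required Cartan frame satisfying (\ref{jouken3}). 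Uniqueness follows from the uniqueness part of the linear initial value problem.

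The main delicacy will be the sign bookkeeping in the skew-symmetry check for $E\tilde K$: one must confirm that the signature entry $\epsilon$ in the bottom-right of $E$ matches precisely the sign $\epsilon$ appearing in $\tilde\nabla_{\dot\gamma}B=k_2C+k_3Z_1+\epsilon\gamma$, so that the cross-coupling entries satisfy $E_{n+1,n+1}\tilde K_{n+1,1}=\epsilon\cdot(\epsilon/\sqrt 2)=1/\sqrt 2=-E_{11}\tilde K_{1,n+1}$ and similarly with column $2$ (where an additional sign flip in both factors cancels to give $-1/\sqrt 2$). This pleasant cancellation $\epsilon^2=1$ is what makes the argument work uniformly in the two cases $\mathbb{H}^n_1$ and $\mathbb{S}^n_1$; once this is confirmed, the remainder of the argument parallels the previous Proposition verbatim.
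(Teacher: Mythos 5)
Your proposal is correct and follows essentially the same route as the paper: augment the frame matrix by $\gamma$ itself (the paper places it as the first column, you as the last), write $\dot{\tilde F}=\tilde F\tilde K$, check that $\tilde K$ is skew-symmetric with respect to the signature matrix $E$ containing the extra entry $\epsilon$ for the $\gamma$-slot, and conclude pseudo-orthonormality of the solution via $\tfrac{d}{ds}(E\tilde F^{-1}E)={}^t\tilde K(E\tilde F^{-1}E)$, with uniqueness from the linear initial value problem. Your sign check $\epsilon\cdot(\epsilon/\sqrt2)=1/\sqrt2$ is exactly the cancellation that makes the paper's relation ${}^t\tilde K=-E\tilde K E$ hold.
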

\begin{proof}
We put $V=\frac{1}{\sqrt{2}}(A+B)$, $W=\frac{1}{\sqrt{2}}(A-B)$ and $\tilde{F}=(\gamma \quad V\quad W\quad Z_1\quad \cdots \quad Z_{n-3})$. 
Then (\ref{jouken3}) holds
\begin{eqnarray}
\dot{\tilde{F}}(s)=\tilde{F}(s)\tilde{K}(s) \label{ODE3}
\end{eqnarray}
where
\begin{eqnarray}
\tilde{K}=
\left(
\begin{array}{lll}
\begin{matrix}
0 & \frac{1}{\sqrt{2}}\epsilon & -\frac{1}{\sqrt{2}}\epsilon & 0& 0 \\
\frac{1}{\sqrt{2}}& 0 &0 & \frac{1}{\sqrt{2}}(k_1+k_2) & \frac{1}{\sqrt{2}}k_3 & \\ 
\frac{1}{\sqrt{2}} &0 &0 & -\frac{1}{\sqrt{2}}(k_1-k_2) & \frac{1}{\sqrt{2}}k_3 & \\
0& \frac{1}{\sqrt{2}}(k_1+k_2) & \frac{1}{\sqrt{2}}(k_1-k_2) &0 &0 & \\
0& \frac{1}{\sqrt{2}}k_3 & -\frac{1}{\sqrt{2}}k_3 &0 &0 &-k_4 \\
0& 0 &0 &0 &k_4 &0 & 
\end{matrix}
&
&
\\
&
\ddots
&
\\
& &
\begin{matrix}
0 & -k_{n-1} &0 \\
k_{n-1} & 0 & -k_n \\
0& k_n & 0
\end{matrix}
\end{array}
\right) . \nonumber
\end{eqnarray}
If you give the initial value, (\ref{ODE3}) solution is unique. 
Let $E$ be $\mathrm{diag}(\epsilon, -1,1,\cdots,1)$.
By simple calculations, we have $^t\tilde{K}=-E\tilde{K}E$. Hence
\begin{eqnarray}
\frac{d}{ds}(E\tilde{F}^{-1}E)(s)&=& -E\tilde{K}(s)\tilde{F}(s)^{-1}E \nonumber \\
                                 &=& -E\tilde{K}(s)E(E\tilde{F}^{-1}E)(s)\nonumber \\
                                 &=& ^t\tilde{K}(s)(E\tilde{F}^{-1}E)(s).\nonumber
\end{eqnarray}
Let $\Phi$ be (\ref{ODE3}) solution with $E$ as the initial value, then $^t\Phi(s_0)=(E\Phi^{-1}E)(s_0)=E$.
Thus $^t\tilde{F}=E\tilde{F}^{-1}E$ and $(\gamma, A, B, C, Z_1, \cdots, Z_{n-3})$ satisfies (\ref{jouken2}) and (\ref{jouken4}). 
\end{proof}
\section{Proof of Theorem \ref{thmC}.}

In this section, we prove Theorem \ref{thmC}.  

\noindent
{\it Proof of Theorem \ref{thmC}.} 
By Lemma \ref{lemm7.2}, we have
\begin{eqnarray}
\begin{array}{l}
{\bf x}_s = A(s)-{\displaystyle \frac{k_1(s) k_2 |z|^2}{2}}B(s) +k_2tC(s) +k_3tZ_1(s) +{\displaystyle \sum_{j=1}^{n-2}z_j\dot{Z}_j(s)} -t\gamma(s), \\
{\bf x}_t = B(s), \vspace{5pt}\\
{\bf x}_{z_i} = z_i\gamma(s) + Z_i(s) -k_2z_iC(s).
\end{array}
\end{eqnarray}
The unit normal vector field $N$ of the Lorentzian hypersurface ${\bf x}:M^n_1\hookrightarrow \mathbb{H}^{n+1}_1$ as in Theorem \ref{thmC} is given by
$$N(s,t,z)=k_2tB(s)+k_2\sum_{j=1}^{n-2}z_jZ_j(s)+\left(1-\frac{|z|^2}{2}\right)C(s)+\frac{k_2|z|^2}{2}\gamma(s),$$
where $z=(z_1, \cdots ,z_{n-2})$.  
With respect to $(\frac{\partial{\bf x}}{\partial s},\frac{\partial{\bf x}}{\partial t},
\frac{\partial {\bf x}}{\partial z_1},\cdots, \frac{\partial {\bf x}}{\partial z_{n-2}})$, 
the shape operator $A_N$ is expressed as 
$$A_N=\left(
\begin{array}{ll}
\begin{matrix}
-k_2 & 0 \\
-k_1(s) & -k_2
\end{matrix}
& 
\hspace{30pt}\BigZero
\\
\hspace{30pt}\BigZero
&
\begin{matrix} 
-k_2 & & \\
     & \ddots & \\
 & & -k_2
\end{matrix}
\end{array}
\right)
.$$
By Lemma \ref{lemm7.2}, we obtain
\begin{eqnarray}
\begin{array}{ll}
\langle \dot{Z}_1,\dot{Z}_1\rangle =k_4^2 & \\
\langle \dot{Z}_i,\dot{Z}_i\rangle =k_{i+2}^2+k_{i+3}^2 & (2\leq i\leq n-3)\\
\langle \dot{Z}_{n-3},\dot{Z}_{n-3}\rangle =k_n^2 & \\
\langle \dot{Z}_i,\dot{Z}_j\rangle =0 & (i\neq j \quad \mathrm{or} \quad i\neq j\pm2)\\
\langle \dot{Z}_i,\dot{Z}_{i+2}\rangle =-k_{i+3}k_{i+4} & (1\leq i\leq n-4)
\end{array}
\end{eqnarray}
and
\begin{eqnarray}
\sum_{i,j=1}^{n-2} z_iz_j \langle \dot{Z_i},\dot{Z_j} \rangle &=&k_4^2z_1^2+\sum_{i=2}^{n-3}(k_{i+2}^2+k_{i+3}^2)z_i^2+k_n^2z_{n-3}^2 +2(-k_4k_5z_1z_3-\cdots -k_{n-1}k_nz_{n-4}z_{n-2}) \nonumber \\
                                                              &=&k_4^2z_1^2+k_n^2z_{n-3}^2 +\sum_{i=1}^{n-3}(k_{i+2}z_i-k_{i+3}z_{i+1})^2. \nonumber
\end{eqnarray}
Hence
\begin{flalign}
\langle {\bf x}_s,{\bf x}_s\rangle &= -k_1k_2|z|^2\langle A,B\rangle +k_2^2t^2\langle C,C\rangle + k_3^2t^2\langle Z_1,Z_1\rangle +\sum_{i,j=1}^{n-2}z_iz_j\langle \dot{Z}_i,\dot{Z}_j\rangle \nonumber \\
                                   & \hspace{20pt} +t^2\langle \gamma, \gamma \rangle  +2k_3t\langle Z_1, \sum_{j=1}^{n-2}z_j\dot{Z_j}\rangle -k_1k_2|z|^2\langle B, z_1\dot{Z}_1\rangle \nonumber \\
                                   &= k_1k_2|z|^2(1+k_3z_1)+k_3^2t^2+k_4^2z_1^2 +\sum_{i=1}^{n-3}(k_{i+2}z_i-k_{i+3}z_{i+1})^2 +k_n^2z_{n-3}^2 -2k_3k_4z_2t,\nonumber \\
\langle {\bf x}_s,{\bf x}_{z_i}\rangle &= k_3t\delta_{1i} +\sum_{j=1}^{n-2}z_j\langle Z_i,\dot{Z}_j\rangle \nonumber \\
&=
\left\{
\begin{array}{l}
k_3t-k_4z_2 \quad (i=1)\\
k_{i+2}z_{i-1}-k_{i+3}z_{i+1} \quad (2\leq i\leq n-3) \\
k_nz_{n-3} \quad (i=n-2)
\end{array}
\right. ,\nonumber \\
\langle {\bf x}_s,{\bf x}_t\rangle &=-(1+|z|^2+k_3z_1),\quad\,\,
\langle {\bf x}_t,{\bf x}_t\rangle = \langle {\bf x}_t,{\bf x}_{z_i}\rangle =0,\quad\,\,
\langle {\bf x}_{z_i},{\bf x}_{z_j}\rangle = \delta_{ij}. \nonumber
\end{flalign}
We put $e_j={\bf x}_{z_j}$ $(1\leq j\leq n-2)$ 
and  
$$\tilde{e}_{n-1}:={\bf x}_s-\sum_{j=1}^{n-2}\langle {\bf x}_s,e_j\rangle e_j.$$
If $\tilde{e}_{n-1}$ is non-null, we put 
$$e_{n-1}:=\frac{1}{|\tilde{e}_{n-1}|}\tilde{e}_{n-1},\quad 
\tilde{e}_n:=\frac{1}{|\tilde{e}_{n-1}|}(\langle {\bf x}_s,{\bf x}_t\rangle e_{n-1} -\langle \tilde{e}_{n-1},\tilde{e}_{n-1}\rangle {\bf x}_t)$$
and $e_n:=\frac{1}{\langle {\bf x}_s,{\bf x}_t\rangle}\tilde{e}_n$. 
With respect to the orthonormal frame field $(e_1,\cdots ,e_n)$, the shape operator $A_N$ is expressed as
$$A_N=
\left(
\begin{array}{ll}
\begin{matrix}
-k_2 & & \\
 & \ddots & \\
 & & -k_2 
\end{matrix}
&
\hspace{40pt}\BigZero
\\
\hspace{30pt}\BigZero
&
\begin{matrix} 
-k_2-\alpha & -\alpha \\
 \alpha & -k_2+\alpha
\end{matrix}
\end{array}
\right)
,$$
where $\alpha=\epsilon_{n-1}\frac{k_1|\tilde{e}_n|}{|\tilde{e}_{n-1}|^2}$.
If $\tilde{e}_{n-1}$ is null, we put 
$$e_{n-1}:=\frac{1}{\sqrt{2}\langle \tilde{e}_{n-1},{\bf x}_t\rangle ^{\frac{1}{2}}}(\tilde{e}_{n-1}+{\bf x}_t),$$
$$e_{n}:=\frac{1}{\sqrt{2}\langle \tilde{e}_{n-1},{\bf x}_t\rangle ^{\frac{1}{2}}}(\tilde{e}_{n-1}-{\bf x}_t).$$
With the orthonormal frame field $(e_1,\cdots ,e_n)$, the shape operator $A_N$ is expressed as 
$$A_N=
\left(
\begin{array}{cc}
\begin{matrix}
-k_2 & & \BigZero \\
 & \ddots & \\
 & & -k_2 
\end{matrix}
&
\begin{matrix}
 \beta_1 & \beta_1\\
 \vdots & \vdots \\
 \beta_{n-2} & \beta_{n-2}
\end{matrix} 
\\
\hspace{0pt}\BigZero
&
\begin{matrix}
-k_2-\alpha & -\alpha \\
\alpha & -k_2+\alpha
\end{matrix}
\end{array}
\right)
,$$
where $\alpha=\frac{k_1(s)}{2}$ and $\beta_j=\frac{k_2}{\sqrt{2}\langle \tilde{e}_{n-1},{\bf x}_t\rangle^\frac{1}{2}}\sum_{j=1}^{n-2}\langle {\bf x}_s,e_j\rangle$. 
In both cases, $\Vert \hat{h}\Vert^2=n$ and the mean curvature $\hat{\mathcal{H}}=-k_2$. 
By (\ref{eq5}), $M^n_1$ is scalar flat. 
Hence, $\Delta \tilde{\nu}=n\tilde{\nu}-2k_2N\wedge e_1\wedge e_2 \neq0$ and $\Delta^2\tilde{\nu}=0$ by (\ref{eq1}) and (\ref{eq3}). 
Therefore, $\tilde{\nu}$ is of infinite type by Lemma {\ref{lemm1}}.\qed

\vspace{0.5truecm}

{\small 
\leftline{Honoka Kobayashi}
\leftline{Department of Mathematics, Faculty of Science}
\leftline{Tokyo University of Science, 1-3 Kagurazaka}
\leftline{Shinjuku-ku, Tokyo 162-8601 Japan}
\leftline{{\it e-mail}: 1116607@ed.tus.ac.jp} 
}

\vspace{5pt}

{\small 
\leftline{Naoyuki Koike}
\leftline{Department of Mathematics, Faculty of Science}
\leftline{Tokyo University of Science, 1-3 Kagurazaka}
\leftline{Shinjuku-ku, Tokyo 162-8601 Japan}
\leftline{{\it e-mail}: koike@rs.kagu.tus.ac.jp}
}

\end{document}